\title[CAT(0) cube complexes and inner amenability]{CAT(0) cube complexes and inner amenability}
\author[B. Duchesne]{Bruno Duchesne}
\author[R. Tucker-Drob]{Robin Tucker-Drob}
\author[P. Wesolek]{Phillip Wesolek}
\date{March 2019}
\newtheorem{thm}{Theorem}[section]
\newtheorem{prop}[thm]{Proposition}
\newtheorem{lem}[thm]{Lemma}
\newtheorem{cor}[thm]{Corollary}
\theoremstyle{definition}
\newtheorem{defn}[thm]{Definition}
\newtheorem{rmk}[thm]{Remark}
\newtheorem*{ack}{Acknowledgments}
\newtheorem{exs}[thm]{Example}
\newcommand{\Zb}{\mathbb{Z}}
\newcommand{\Nb}{\mathbb{N}}
\newcommand{\Rb}{\mathbb{R}}
\newcommand{\Qb}{\mathbb{Q}}
\newcommand{\mc}[1]{\mathcal{#1}}
\newcommand{\mf}[1]{\mathfrak{#1}}
\newcommand{\cat}{{\upshape CAT($0$)}\xspace}
\DeclareMathOperator{\Fix}{Fix}
\DeclareMathOperator{\Hyp}{Hyp}
\DeclareMathOperator{\Ell}{Ell}
\DeclareMathOperator{\Par}{Par}
\DeclareMathOperator{\Aut}{Aut}
\DeclareMathOperator{\Isom}{Isom}
\newcommand{\bo}{\partial}
\newcommand{\acts}{\curvearrowright}
\newcommand{\defeq}{\coloneqq}
\begin{document}

\maketitle
\begin{abstract}We here consider inner amenability from a geometric and group theoretical perspective. We prove that for every non-elementary action of a group $G$ on a finite dimensional irreducible \cat cube complex, there is a nonempty $G$-invariant closed convex subset such that every conjugation invariant mean on $G$ gives full measure to the stabilizer of each point of this subset. Specializing our result to trees leads to a complete characterization of inner amenability for HNN-extensions and amalgamated free products. One novelty of the proof is that it makes use of the existence of certain idempotent conjugation-invariant means on $G$.

We additionally obtain a complete characterization of inner amenability for permutational wreath product groups. One of the main ingredients used for this is a general lemma which we call the location lemma, which allows us to ``locate'' conjugation invariant means on a group $G$ relative to a given normal subgroup $N$ of $G$. We give several further applications of the location lemma beyond the aforementioned characterization of inner amenable wreath products.
\end{abstract}
\tableofcontents
\addtocontents{toc}{\protect\setcounter{tocdepth}{1}}

\section{Introduction}

A discrete group $G$ is said to be \textbf{inner amenable} if there exists an atomless mean on $G$ which is invariant for the action of $G$ on itself by conjugation. This notion was isolated by Effros in \cite{MR0355626}\footnote{Our definition is slightly different from the definition given in \cite{MR0355626}, where the mean is not required to be atomless, but rather supported on $G - \{ 1_G \}$. However, the two definitions coincide for infinite conjugacy class (ICC) groups, which were the main concern of \cite{MR0355626}.} in order to elucidate Murray and von Neumann's proof that the group von Neumann algebra of the free group on two generators has no nontrivial asymptotically central sequences \cite{MvN43}. Similar connections between inner amenability and central sequences were later found by Choda \cite{Ch82} and Jones and Schmidt \cite{JS87} in the context of ergodic theory. These connections to operator algebras and ergodic theory have continued to provide a rich context and motivation for the study of inner amenability; see, e.g., \cite{Va12, Ki12, Ki14a, Ki14b, CSU16, Ki17, Ki19, Tu14, HI16, Oz16, DV18, IS18, BIP18, KeTD19, KiTD18}. Perhaps because of this, inner amenability has been studied primarily by virtue of its relevance to these two fields (with a few exceptions, e.g., \cite{BH86, BedosHarpeErratum, MR2226017, GH91}). In this article, by contrast, we explore inner amenability from the perspectives of geometry and group theory.

%
%
%

\subsection{Conjugation invariant means on groups acting on trees}\label{subsec:introtree}  We give a complete characterization of inner amenability for groups built via amalgamated free products and HNN-extensions.

We say that an amalgamated free product $G =A*_H B$ is {\bf nondegenerate} if $H\neq A$, $H\neq B$, and the index of $H$ in either $A$ or $B$ is at least three.

\begin{thm}[Corollary~\ref{cor:amalgam_hnn}]\label{thm:amalgam} Let $G =A*_H B$ be a nondegenerate amalgamated free product. Then:
\begin{enumerate}
\item  Every conjugation invariant mean on $G$ concentrates on $H$.
\item $G$ is inner amenable if and only if there exist conjugation invariant, atomless means $m_A$ on $A$ and $m_B$ on $B$ with $m_A(H)=m_B(H)=1$, and $m_A(E)=m_B(E)$ for every $E\subseteq H$.
\end{enumerate}
In particular, if $G$ is inner amenable, then so are each of the groups $A$, $B$, and $H$.
\end{thm}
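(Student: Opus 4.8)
The plan is to realize $G = A *_H B$ as acting on its Bass--Serre tree $T$ and to invoke the cited theorem on non-elementary actions on finite-dimensional irreducible \cat cube complexes. Recall that the vertices of $T$ are the cosets $gA$ and $gB$, the edges are the cosets $gH$, the stabilizer of the base vertex $A$ (resp.\ $B$) is $A$ (resp.\ $B$), and the stabilizer of the base edge is $H = A\cap B$. The first task is to check that the nondegeneracy hypotheses place this action within the scope of the main theorem. Since $H\neq A$ and $H\neq B$, the splitting is reduced, so the action on $T$ is \emph{minimal}; moreover every $A$-vertex has valence $[A:H]\geq 2$ and every $B$-vertex has valence $[B:H]\geq 2$, and since $[A:H]\geq 3$ or $[B:H]\geq 3$ the tree is not a line. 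Hence $T$ is a one-dimensional irreducible \cat cube complex and the $G$-action is non-elementary (it has unbounded orbits, fixes no end, and preserves no pair of ends). I would record these verifications as a short lemma.

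With the hypotheses in place, the main theorem supplies a nonempty $G$-invariant closed convex subset $Y\subseteq T$ on which every conjugation invariant mean $m$ gives full measure to each point stabilizer. A closed convex subset of a tree is a subtree, and by minimality the only nonempty $G$-invariant subtree is $T$ itself; hence $Y=T$. In particular the base $A$- and $B$-vertices lie in $Y$, so $m(A)=m(B)=1$ for every conjugation invariant mean $m$, and finite additivity then gives $m(H)=m(A\cap B)=1$. This proves part (1).

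For part (2), the forward direction is immediate: if $m$ is an atomless conjugation invariant mean on $G$, then by part (1) it is concentrated on $H$, and its restrictions $m_A$, $m_B$ to subsets of $A$, $B$ are atomless, satisfy $m_A(H)=m_B(H)=1$, are $A$- resp.\ $B$-conjugation invariant (being restrictions of a $G$-conjugation invariant mean), and agree with $m$, hence with each other, on every $E\subseteq H$. For the converse, given $m_A,m_B$ as stated I would set $m(E):=m_A(E\cap H)=m_B(E\cap H)$ for $E\subseteq G$; this is an atomless mean concentrated on $H$. The remaining point is $G$-conjugation invariance, and since the invariant elements form a subgroup it suffices to test $g\in A$ and $g\in B$. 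For $g\in A$ one uses that $m_A(H)=m_A(gHg^{-1})=1$ to replace $H$ by $gHg^{-1}$ up to an $m_A$-null set, whence $m(gEg^{-1})=m_A\big(g(E\cap H)g^{-1}\big)=m_A(E\cap H)=m(E)$ by $A$-conjugation invariance; the case $g\in B$ is identical with $m_B$. Finally, the ``in particular'' clause is read off from part (2): $m_A$ and $m_B$ witness inner amenability of $A$ and $B$, and their common restriction to $H$, being atomless and, since $H$ is itself a group, $H$-conjugation invariant, witnesses inner amenability of $H$.

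The genuinely delegated step is part (1), which rests entirely on the cited main theorem; the only real work on the amalgam side is the hypothesis check at the start and the conjugation-invariance verification in the converse of (2). I expect the latter to be the main obstacle to write carefully, because $H$ is typically not normal, so conjugation moves $H$ off itself and one must exploit that $H$ carries full measure to absorb the discrepancy; without this observation the naive gluing of $m_A$ and $m_B$ need not be invariant.
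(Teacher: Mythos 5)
Your proposal is correct and takes essentially the same route as the paper: realize $G$ on its Bass--Serre tree, check that nondegeneracy makes the action minimal and non-elementary, invoke the cube-complex theorem (in its one-dimensional form, Theorem~\ref{thm:trees}) to get that every conjugation invariant mean gives full measure to the vertex stabilizers and hence to $H=A\cap B$, and then obtain part (2) by restricting/gluing means along the $m$-conull subgroup $H$. The only step you leave implicit is essentiality of the action, which is exactly what the paper's proof of Theorem~\ref{thm:trees} supplies (minimality forces unbounded orbits, hence essentiality) before citing Theorem~\ref{thm:fix_ccc}, so your argument closes the same way.
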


Let $H$ be a subgroup of a group $K$ and let $\phi : H \rightarrow K$ be an injective group homomorphism. The associated HNN-extension
\[
\mathrm{HNN}(K,H,\phi ) \defeq  \langle K , t \, | \, tht^{-1}  = \varphi (h), \ (h\in H ) \rangle
\]
is said to be {\bf ascending} if either $H=K$ or $\phi (H)=K$. Otherwise, it is called {\bf non-ascending}.

\begin{thm}[Corollary~\ref{cor:amalgam_hnn}]\label{thm:HNN}
Let $G =\mathrm{HNN}(K,H,\phi )$ be a non-ascending HNN extension. Then:
\begin{enumerate}
\item Every conjugation invariant mean on $G$ concentrates on $H$.
\item $G$ is inner amenable if and only if there exists a conjugation invariant, atomless mean $m$ on $K$ with $m(H)=1$, and $m(E)=m(\phi (E))$ for every $E\subseteq H$.
\end{enumerate}
In particular, if $G$ is inner amenable, then so are the groups $K$ and $H$.
\end{thm}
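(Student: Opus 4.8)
The plan is to prove both theorems simultaneously by realizing the amalgamated free product and the HNN-extension as groups acting on their Bass--Serre trees, and then invoking the main cube-complex result (with trees as the one-dimensional case). Let me sketch this.

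First I would set up the Bass--Serre tree $T$ for $G$. For $G = A*_H B$, the tree $T$ has vertices the cosets $gA$ and $gB$, edges the cosets $gH$, and $G$ acts on $T$ by left translation; the edge stabilizers are conjugates of $H$ and the vertex stabilizers are conjugates of $A$ and $B$. For the HNN-extension $G = \mathrm{HNN}(K,H,\phi)$, the tree has vertices the cosets $gK$ and edges the cosets $gH$, with edge stabilizers conjugate to $H$ and vertex stabilizers conjugate to $K$. The nondegeneracy hypothesis (for the amalgam) and the non-ascending hypothesis (for the HNN-extension) are precisely what guarantees that this action is \emph{non-elementary} and that $T$ is an irreducible finite-dimensional (here one-dimensional) \cat cube complex: nondegeneracy rules out the cases where $H=A$, $H=B$, or the index-two-on-both-sides situation that would make $T$ a line or a point, while non-ascending rules out the case where the HNN-extension acts with a fixed end. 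I would verify these geometric translations carefully, as matching the combinatorial hypotheses to the precise notion of ``non-elementary action'' in the main theorem is where bookkeeping errors creep in.

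Next I would apply the main theorem from the abstract: for every non-elementary action on a finite-dimensional irreducible \cat cube complex, there is a nonempty $G$-invariant closed convex subset $Y$ such that every conjugation invariant mean $m$ on $G$ gives full measure to $\Stab(y)$ for each $y\in Y$. In a tree, I expect $Y$ to be forced to be a single edge or small subtree in the irreducible non-elementary setting, so that the relevant point stabilizers are the edge stabilizers, which are conjugates of $H$. Since conjugation invariant means are preserved under the conjugation action and are invariant under replacing a subgroup by a conjugate (a conjugation-invariant mean assigns the same value to a set and its conjugate), concluding $m(H)=1$ from $m(\Stab(y))=1$ for an edge stabilizer $\Stab(y) = gHg^{-1}$ should be immediate by conjugation invariance. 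This yields part (1) of both theorems.

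For part (2), the inner amenability characterization, I would argue as follows. The ``only if'' direction uses part (1): any atomless conjugation invariant mean $m$ on $G$ concentrates on $H$, hence restricts to give atomless conjugation-invariant means on the vertex groups; I would show that restricting and pushing forward $m$ to $A$ and $B$ (respectively to $K$) produces the required means $m_A, m_B$ (respectively $m$) that agree on subsets of $H$ and are compatible with $\phi$, using that conjugation by $t$ (in the HNN case) or by coset representatives interchanges the two sides and implements $\phi$. The ``if'' direction is a construction: given compatible atomless conjugation-invariant means on the vertex groups that agree on $H$, I would glue them into a single atomless conjugation-invariant mean on $G$, using the normal form / tree structure to check $G$-conjugation invariance. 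The main obstacle I anticipate is this gluing step and the verification that the compatibility condition ($m_A(E)=m_B(E)$ on $E\subseteq H$, and $m(E)=m(\phi(E))$ in the HNN case) is exactly what is needed for the glued mean to be invariant under the full group $G$ rather than merely under the vertex groups --- this is where the interaction between the mean and the HNN stable letter $t$ (or the failure of $A$ and $B$ to commute) must be controlled, and where the precise structure of conjugacy classes in amalgams and HNN-extensions (via their normal forms and Britton's lemma) will be needed.
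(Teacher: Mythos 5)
Your proposal is correct and follows essentially the same route as the paper: pass to the Bass--Serre tree, apply the \cat cube complex theorem specialized to trees (Theorem~\ref{thm:trees}, itself a consequence of Theorem~\ref{thm:fix_ccc}) to conclude that every conjugation invariant mean gives full measure to edge stabilizers, hence to $H$ by conjugation invariance, and then deduce part (2) directly. Two minor remarks: the invariant subspace cannot be ``a single edge'' --- being $G$-invariant under a non-elementary, minimal action it is the whole tree, which only helps, since it certainly contains edge midpoints whose stabilizers are conjugates of $H$ --- and the ``gluing'' in part (2) needs no Britton's lemma, since the candidate mean concentrates on $H$ and conjugation invariance need only be checked on the generators, namely elements of $K$ and the stable letter $t$, where it is exactly the hypothesis $m(E)=m(\phi(E))$.
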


Theorems \ref{thm:amalgam} and \ref{thm:HNN} are group theoretical consequences of a more general geometric statement, regarding groups acting on trees. Given a group $G$ and a $G$-set $X$,  we denote by $G _x$ the stabilizer subgroup of $G $ at $x\in X$.

\begin{thm}\label{thm:tree}
Suppose that a group $G$ acts by automorphisms on a tree $T$. Assume that $G$ does not fix a vertex, an edge, an end, or a pair of ends. Then there is a nonempty $G$-invariant subtree $T_0$ of $T$ such that $m(G_x)=1$ for every conjugation invariant mean $m$ on $G$ and every vertex $x$ of $T_0$.
\end{thm}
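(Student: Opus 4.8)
The plan is to take $T_0$ to be the minimal $G$-invariant subtree. Under the stated hypotheses the action is of general type (it is neither elliptic, nor does it fix an end or a pair of ends), so such a $T_0$ exists, is unique, equals the union of the axes of the hyperbolic elements of $G$, and the action of $G$ on $T_0$ is minimal with infinite end space $\partial T_0$. I record the structural facts I will use: every edge of $T_0$ lies on the axis of some hyperbolic element, and the boundary action $G\curvearrowright\partial T_0$ admits no $G$-invariant mean. The latter follows from north--south dynamics: an invariant mean restricts, by Riesz representation, to an invariant regular Borel probability measure $\nu$ on $\partial T_0$; for a hyperbolic $t$ we have $t^n_*\nu\to\delta_{t^+}$ weak-$*$, so invariance forces $\nu=\delta_{t^+}$, which is not invariant under the rest of $G$, a contradiction since $\partial T_0$ is infinite.

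Fix a conjugation invariant mean $m$ and a vertex $x\in T_0$; I must show $m(\{g:gx\neq x\})=0$. First I dispose of the hyperbolic elements. The assignment $g\mapsto(g^+,g^-)$ of the ordered pair of fixed ends is defined on the set $\Hyp$ of hyperbolic elements and is conjugation-equivariant, in the sense that $(hgh^{-1})^{\pm}=h\cdot g^{\pm}$. Hence if $m(\Hyp)=c>0$, pushing the normalized restriction $\tfrac1c\,m|_{\Hyp}$ forward along this map produces a $G$-invariant mean on the space of distinct pairs of ends, and projecting to one coordinate gives a $G$-invariant mean on $\partial T_0$, contradicting the previous paragraph. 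Therefore $m(\Hyp)=0$, and it remains to control the \emph{elliptic} elements that move $x$.

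The next step is a decomposition into directions. Let $\{\mathfrak h_i\}$ be the halfspaces bounded by the edges at $x$, so that $T_0=\{x\}\sqcup\bigsqcup_i\mathfrak h_i$ and $\{g:gx\neq x\}=\bigsqcup_i A_i$ with $A_i=\{g:gx\in\mathfrak h_i\}$. Since the single point $gx$ lies in at most one halfspace, $A_i\cap A_j=\emptyset$ whenever $\mathfrak h_i\cap\mathfrak h_j=\emptyset$; and a direct computation shows that conjugation by $s\in G_x$ carries $A_i$ to $A_{\sigma(s)i}$, where $\sigma(s)$ is the permutation of the edges at $x$ induced by $s$. By conjugation invariance $m(A_i)=m(A_{\sigma(s)i})$, so $m(A_i)$ is constant along $G_x$-orbits of directions. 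When the $G_x$-orbit of a direction $i$ is infinite, the corresponding $A_j$ are pairwise disjoint and of equal measure, so their common value must be $0$; this kills every direction with infinite $G_x$-orbit, and the same disjointness-plus-equal-measure principle applies to deeper halfspaces $\mathfrak k$ in a common $G_x$-orbit through the sets $\{g:gx\in\mathfrak k\}$.

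The main obstacle is thus the \emph{elliptic elements moving $x$ along a direction whose $G_x$-orbit is finite}: here the counting argument gives nothing, and, unlike the hyperbolic case, there is no canonical conjugation-equivariant map from such elements to $\partial T_0$ (the fixed-point set $\Fix(g)$ is a subtree, and the nearest-point projection of $x$ onto it is not equivariant, because conjugation moves the basepoint). This is precisely where I expect to bring in an idempotent conjugation invariant mean $m_0$, whose existence is guaranteed because the conjugation invariant means form a weak-$*$ compact convex subsemigroup under convolution and so contain an idempotent. The relation $m_0*m_0=m_0$ endows $m_0$ with left-invariance under an associated subgroup, and I would use $m_0$ as an averaging device against which to symmetrize the configuration, exploiting the resulting rigidity to transport the mass carried by the finite-orbit elliptic directions outward along a hyperbolic axis (using that every edge of $T_0$ lies on some axis) until it accumulates on $\partial T_0$, where the absence of an invariant mean forces it to vanish; a separate argument, using that the means concentrated on the subgroup $G_x$ form a compact convex subsemigroup, is then needed to pass between $m_0$ and the arbitrary mean $m$. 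Assembling the hyperbolic, infinite-orbit, and finite-orbit contributions yields $m(\{g:gx\neq x\})=0$, that is $m(G_x)=1$, for every vertex $x$ of $T_0$.
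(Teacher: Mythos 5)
Your opening moves coincide with the paper's reduction: pass to the minimal invariant subtree $T_0$, kill the hyperbolic part by pushing forward to the boundary, and then analyze elliptic elements. But the proof has a genuine gap exactly where you flag the ``main obstacle'': elliptic elements whose fixed trees lie in a direction with finite $G_x$-orbit are never handled, and this is not a corner case but the generic one. For example, for $G=\Zb/2\ast\Zb/3$ acting on its Bass--Serre tree, the stabilizer of every vertex permutes the incident edges transitively in a single \emph{finite} orbit, so your counting argument yields no information at any vertex, even though the theorem's conclusion there (every conjugation invariant mean is the point mass at $1$) is the whole content. What you offer for this case --- using an idempotent conjugation invariant mean to ``transport mass outward along a hyperbolic axis until it accumulates on $\partial T_0$'' --- is a hope, not an argument: as you yourself observe, there is no conjugation-equivariant assignment from such elliptic elements to boundary points or axes, so there is nothing to push forward. (A secondary issue: your proof that $\partial T_0$ carries no $G$-invariant mean uses Riesz representation and north--south dynamics, which needs $\partial T_0$ compact, i.e.\ $T_0$ locally finite; the theorem has no such hypothesis. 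The paper's combinatorial half-tree argument avoids this.)

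The paper closes the elliptic case by a \emph{global} argument on edges rather than a local one at the vertex $x$, and it needs conjugation invariance under all of $G$, not just $G_x$. Having shown $m(\Ell(G))=1$, for each edge with half-trees $\mathfrak{h},\mathfrak{h}^*$ one compares $m(\{g : \Fix(g)\subseteq\mathfrak{h}\})$ and $m(\{g : \Fix(g)\subseteq\mathfrak{h}^*\})$. Conjugation invariance together with the double skewer lemma shows that ``one-sided'' edges cannot coexist with ``two-sided'' ones, so if some edge is one-sided, every edge is. If some edge then had sides of unequal measure, orienting all such edges toward the heavier side gives a family of pairwise intersecting half-trees whose total intersection is empty by essentiality, producing a canonical $G$-fixed end and contradicting non-elementarity; if instead all sides have equal (hence positive) measure, a facing triple of edges gives disjointness relations $\alpha_1\geq\alpha_2+\alpha_3$ and $\alpha_2\geq\alpha_1+\alpha_3$, forcing $\alpha_3=0$, again a contradiction. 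Hence $m$-almost every $g$ has fixed points on both sides of every edge; since $\Fix(g)$ is a connected subtree, almost every $g$ fixes every edge pointwise, and therefore fixes every vertex of $T_0$. Note finally that idempotent means play no role in the one-dimensional case: in the paper they appear only in higher dimensions, to show that the minimizer of $x\mapsto\int_G d(x,gx_0)^2\,dm(g)$ is almost fixed (followed by Ellis's Lemma and a stationarity argument to return to arbitrary means), which is a different use from the one you sketch.
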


This theorem follows directly from Theorem~\ref{thm:trees} by considering the unique minimal $G$-invariant subtree $T_0$ of $T$.

\subsection{Groups acting on \cat cube complexes}
Theorem \ref{thm:tree} is itself a special case of the following general theorem concerning groups acting on finite dimensional \cat cube complexes.

\begin{thm}[Theorem~\ref{thm:fix_ccc}]\label{ccc intro} Let $G$ be a group acting essentially and non-elementarily on an irreducible finite dimensional \cat cube complex $X$. Then there exists a nonempty $G$-invariant closed convex subspace $X_0$ of $X$ such that $m(G_x)=1$ for every conjugation invariant mean $m$ on $G$ and every $x\in X_0$.
\end{thm}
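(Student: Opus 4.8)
The plan is to combine the hyperplane dynamics supplied by the Caprace--Sageev structure theory with a wandering-set argument and with the existence of idempotent conjugation-invariant means. First I would fix the target set: since the action is essential, non-elementary, and $X$ is irreducible and finite dimensional, I would take $X_0$ to be the minimal nonempty $G$-invariant closed convex subcomplex. Non-elementarity guarantees it exists and that the induced action on it is again essential, so the Flipping Lemma and, crucially, the Double Skewering Lemma of Caprace--Sageev apply to the halfspaces of $X_0$. A preliminary computation shows that $m(G \setminus G_x)$ is constant along $G$-orbits: if $y = kx$ then $\{g : gy \neq y\} = k\{g : gx \neq x\}k^{-1}$, so conjugation-invariance of $m$ gives $m(G\setminus G_y) = m(G \setminus G_x)$. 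Hence it suffices to prove $m(G_x)=1$ for one vertex $x$ in each $G$-orbit, and by passing through the cubes containing a point one reduces to vertices.

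The geometric heart is a wandering lemma at the level of a single hyperplane. For a halfspace $h$ with $x \in h$, let $B_{\mathfrak h} = \{g : x \in h,\ gx \in h^*\}$ be the set of elements that push $x$ across the bounding hyperplane $\mathfrak h$. The Double Skewering Lemma produces $a \in G$ with $ah \subsetneq h$ whose powers drive $\mathfrak h$ off to infinity, so that the translated hyperplanes $a^n\mathfrak h$ are pairwise distinct and eventually separate any prescribed pair of points. Using $a^n B_{\mathfrak h} a^{-n} = B_{a^n\mathfrak h}$ (the same conjugation computation), I would show $B_{\mathfrak h} \cap a^n B_{\mathfrak h}a^{-n} = \emptyset$ for all $n \neq 0$. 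Since the translates $a^n B_{\mathfrak h}a^{-n}$ are then pairwise disjoint with common $m$-measure and there are infinitely many of them, this forces $m(B_{\mathfrak h}) = 0$ for \emph{every} conjugation-invariant mean $m$. This generalizes the elementary ``conjugate by a transverse generator to change the leading letter'' argument that already shows $F_2$ admits no atomless conjugation-invariant mean.

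The step I expect to be the main obstacle is passing from the vanishing of each $B_{\mathfrak h}$ to the vanishing of $G \setminus G_x = \bigcup_{\mathfrak h} B_{\mathfrak h}$. When $X$ is not locally finite there are infinitely many hyperplanes through $x$, and since a mean is only finitely additive, a countable union of null sets need not be null. This is precisely what forces the use of idempotent conjugation-invariant means. The conjugation-invariant means form a nonempty weak$^*$-compact convex set that is a compact right-topological semigroup under convolution, so by Ellis--Numakura it contains an idempotent $e$; an idempotent satisfies the Fubini-type identity $e(A) = \int_G e(s^{-1}A)\,de(s)$, and I would exploit this self-averaging to integrate the per-hyperplane vanishing across the infinitely many directions and conclude $e(G\setminus G_x)=0$, i.e. $e(G_x)=1$.

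Finally I would transfer the conclusion from idempotents to an arbitrary conjugation-invariant mean $m$. Here one uses that $G_x$ is a subgroup: if $e(G_x)=1$ for the relevant idempotent obtained from $m$ (for instance a weak$^*$ cluster point of the Cesàro convolution averages of $m$), then the identity $(m\ast m)(G_x)=\iint \mathbf 1_{G_x}(st)\,dm(s)\,dm(t)$ and the fact that distinct left cosets of $G_x$ are disjoint of full measure only for a single coset let one propagate full measure back to $m$ itself. I would present this transfer as the routine closing step, with the idempotent integration across hyperplane directions being the genuinely new ingredient, exactly as advertised: Caprace--Sageev dynamics produce the wandering sets, and idempotent conjugation-invariant means supply the ``integration'' needed to assemble them over the infinitely many hyperplanes meeting $x$.
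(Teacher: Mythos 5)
Your reduction to a fixed basepoint and your identification of finite additivity as the key obstacle are reasonable, but the geometric heart of your argument — the wandering lemma — is false, and it fails for a structural reason. With $x\in\mathfrak h$ and $B_{\mathfrak h}=\{g : gx\in\mathfrak h^*\}$, take the double-skewering element $a$ with $a\mathfrak h\subsetneq\mathfrak h$: its repelling end lies on the $\mathfrak h^*$ side, so $a^{-k}x\in\mathfrak h^*$ for all large $k$, i.e.\ $a^{-k}\in B_{\mathfrak h}$; since $a^{-k}$ commutes with every $a^n$, it lies in $B_{\mathfrak h}\cap a^nB_{\mathfrak h}a^{-n}$ for every $n$, so these sets are never pairwise disjoint. (Separately, $a^nB_{\mathfrak h}a^{-n}$ is the set of elements pushing $a^nx$ across $a^n\mathfrak h$, not $x$, so the identification with $B_{a^n\mathfrak h}$ at the fixed basepoint also breaks; but the counterexample above kills the argument either way.) The obstruction is exactly the hyperbolic elements: no conjugation-wandering argument can rule them out, since they are immune to conjugation by their own powers. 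This is why the paper must first prove, by a completely different analytic argument (Busemann functions and the Euclidean de Rham factor, Proposition \ref{prop:ell_cat}), that every conjugation-invariant mean concentrates on $\Ell(G)$; and even after that, the per-hyperplane statement it establishes (Proposition \ref{ccc}) is not $m(B_{\mathfrak h})=0$ but rather that $m$-a.e.\ $g$ has fixed points in \emph{both} $\mathfrak h$ and $\mathfrak h^*$, proved by a transversality/product-decomposition argument (the collections $\mathcal W_1,\mathcal W_2$, irreducibility, and facing triples), not by wandering sets.

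The later steps of your sketch also do not amount to proofs, although your instincts about where idempotence enters are partially right. In the paper, the passage from per-hyperplane information to an actual fixed point is not a formal ``integration over the hyperplanes through $x$'': a strong-separation argument (Lemma \ref{lem:bounded}) first produces a uniform ball $B(x_0,C)$ met by $\Fix(g)$ for $m$-a.e.\ $g$; then, for an \emph{idempotent} conjugation-invariant mean, the convex function $\varphi(x)=\int_G d(x,gx_0)^2\,dm(g)$ has a unique minimizer $z$, idempotence forces $\varphi(h^{-1}z)$ to be nearly minimal for $m$-a.e.\ $h$, hence $d(z,h^{-1}z)$ arbitrarily small a.e., and discreteness of orbits upgrades this to $m(G_z)=1$ (Lemma \ref{prop:almost-fix}). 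Finally, your transfer from idempotents back to an arbitrary $m$ has a gap: a weak${}^*$-cluster point of the Ces\`aro convolution averages of $m$ is $m$-stationary (because $n\mapsto n\ast m$ is weak${}^*$-continuous) but need \emph{not} be idempotent, precisely because $n\mapsto m\ast n$ is not weak${}^*$-continuous. The paper resolves exactly this point by applying Ellis's lemma inside the compact left-topological semigroup of $m$-stationary conjugation-invariant means, obtaining an idempotent $n$ with $n\ast m=n$, and then transferring $n(G_x)=1$ to $m(G_x)=1$ via Lemma \ref{lem:stationary}. If you want to salvage your outline, you would need to replace the wandering lemma wholesale by (i) a proof that conjugation-invariant means live on elliptic elements and (ii) the transversality argument of Proposition \ref{ccc}, and then carry out the metric minimization step; the ping-pong intuition imported from $F_2$ does not survive in this setting.
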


Theorem \ref{thm:tree} corresponds precisely to the special case of Theorem \ref{ccc intro} where $X$ is one dimensional.

The essentiality and irreducibility assumptions in Theorem~\ref{ccc intro} can be removed at the cost of passing to a finite index subgroup; see Corollary~\ref{cor:non_irr_CC}. This allows us, for instance, to characterize when a graph product of groups is inner amenable in Theorem~\ref{thm:graph_prod}. Graph products generalize both direct products and free products of groups; examples of graph products of groups include right-angled Artin groups and right-angled Coxeter groups.

These examples, along with Theorems \ref{thm:amalgam} and \ref{thm:HNN}, illustrate that the most interesting applications of Theorem \ref{ccc intro} concern actions which are not necessarily proper. While Theorem~\ref{ccc intro} easily implies that groups acting properly and non-elementarily on finite dimensional \cat cube complexes are not inner amenable (see Corollary \ref{cor:CC_non-inner-amen}), this result can also be deduced from other results in the literature; see Remarks~\ref{rmk:DGO} and \ref{rmk:BIP}.

The proof of Theorem \ref{ccc intro} requires substantially more work than the one-dimensional case covered by Theorem \ref{thm:tree}. One novelty of the proof in the higher dimensional setting, which we now briefly describe, is that it makes use of the existence of certain idempotent conjugation invariant means.

The proof begins by observing that each conjugation invariant mean $m$ on $G$ must concentrate on the set of group elements which act elliptically (Proposition \ref{prop:ell_cat}). The next step uses a transversality argument (Proposition \ref{ccc}) to show that, for each half-space $\mathfrak{h}$ of $X$, there is a $m$-conull set of group elements which fix some point in $\mathfrak{h}$. At this point, the proof in the one dimensional setting is essentially complete, but the situation in higher dimensions becomes more complicated; after moving to a minimal convex subspace $X_0$ of $X$ and fixing some $x_0\in X$, we adapt an argument of Caprace and Sageev \cite{MR2827012} to our setting (Lemma \ref{lem:bounded}) to show that for each $x\in X_0$ the integral $\varphi (x) \defeq \int _G d(x,gx_0)^2 \, dm (g)$ is finite, and hence (using the \cat inequality) the function $x\mapsto \varphi (x)$ achieves a unique minimum at some point $z\in X_0$ (Lemma \ref{prop:almost-fix}). If the mean $m$ is idempotent under convolution, then we can easily deduce (using discreteness of $G$-orbits on $X$) that this point $z$ is fixed by a $m$-conull set of group elements, which would complete the proof. Fortunately, by using a simple stationarity argument (Lemma \ref{lem:stationary}) combined with Ellis's Lemma, we are able to reduce the proof of Theorem \ref{ccc intro} to the special case where the means $m$ under consideration are additionally assumed to be idempotent.

\subsection{Wreath products and the location lemma} We obtain a complete characterization of inner amenability for wreath products.

\begin{thm}[Theorem~\ref{thm:wreath}]\label{thm:wreath1}
Let $H\neq 1$ and $K$ be discrete groups, let $X$ be a set on which $K$ acts, and let $G\defeq H\wr_X K$ be the (restricted) wreath product. Then $G$ is inner amenable if and only if one of the following holds
\begin{enumerate}
\item The action $K\curvearrowright X$ admits an atomless $K$-invariant mean;
\item $H$ is inner amenable and the action $K\curvearrowright X$ has a finite orbit;
\item There is an atomless $K$-conjugation invariant mean $m$ on $K$ satisfying $m(K_x)=1$ for all $x\in X$.
\end{enumerate}
\end{thm}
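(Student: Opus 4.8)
The plan is to write $N \defeq \bigoplus_X H$ for the base group, so that $G = N\rtimes K$ with $K$ permuting coordinates, and to let $\pi\colon G\to K$ be the quotient homomorphism; for $x\in X$ and $h\in H$ write $\delta_x^h\in N$ for the element equal to $h$ in coordinate $x$ and trivial elsewhere. I would treat the three sufficient conditions and the necessity separately, the latter being the substantial direction. A single observation recurs throughout: an atomless mean assigns measure zero to every finite set, so two bounded functions agreeing off a finite set have equal integral.

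For the implications ``(i) $\Rightarrow$ inner amenable'' the strategy is uniform: transport an auxiliary mean into $G$ along an equivariant map and verify conjugation-invariance using that observation. For (1), fix $h\neq 1$ and push the atomless $K$-invariant mean on $X$ forward along $x\mapsto\delta_x^h$; since any $n\in N$ has finite support, conjugation by $n$ alters $\delta_x^h$ only for the finitely many $x\in\supp(n)$, so the pushforward is $N$-conjugation invariant, while $K$-invariance of the original mean gives $K$-conjugation invariance. For (2), take an atomless conjugation-invariant mean $m_H$ on $H$ and a finite $K$-orbit $F$, and push the product mean $m_H^{\otimes F}$ forward along $\bigoplus_F H\hookrightarrow G$; coordinatewise conjugation-invariance of each factor handles $N$, and permutation-invariance of a product of identical means handles the finite $K$-action on $F$. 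For (3), push the given mean $m$ on $K$ forward along $K\hookrightarrow G$: $K$-conjugation invariance is immediate, and for $n\in N$ with $\supp(n)=S$ the identity $nkn^{-1}=(n\cdot k(n^{-1}),\,k)$ shows that conjugation by $n$ fixes $(1,k)$ whenever $k$ fixes $S$ pointwise, which happens on the $m$-conull set $\bigcap_{x\in S}K_x$.

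Conversely, suppose $m$ is an atomless conjugation-invariant mean on $G$. First push $m$ forward along $\pi$ to a conjugation-invariant mean $\bar m\defeq \pi_*m$ on $K$. The engine of the argument is the family of evaluation maps $\phi_x\colon G\to H$ sending $g$ to the value at coordinate $x$ of its $N$-part, together with the computation that conjugation by $\delta_x^h$ sends $\phi_x(g)$ to $h\phi_x(g)h^{-1}$ when $\pi(g)$ fixes $x$ and to $h\phi_x(g)$ when $\pi(g)$ moves $x$. Consequently, pushing $m$ forward along $\phi_x$ and splitting according to whether $\pi(g)$ fixes $x$ produces, in coordinate $x$, a conjugation-invariant mean on $H$ of mass $\bar m(K_x)$ and a left-invariant mean on $H$ of mass $1-\bar m(K_x)$.

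The location lemma, applied to the normal subgroup $N$, is what organizes these pieces into the stated trichotomy: roughly, it lets us decide whether the mass of $m$ is ``located'' on $N$ or escapes to the quotient. If $m$ essentially comes from $K$ --- that is, $\bar m$ is atomless with $\bar m(K_x)=1$ for every $x$ --- we are in case (3). Otherwise mass concentrates on $N$, and here the conjugation-equivariant support map $\supp\colon N\to\mathcal{P}_{\mathrm{fin}}(X)$ pushes the atomless restriction of $m$ to $N$ to a $K$-invariant mean on finite subsets charging no finite set; averaging a bounded function over the support of a random element then yields an atomless $K$-invariant mean on $X$, giving case (1) --- unless this support-mass fails to spread out, in which case it localizes on a finite $K$-orbit $F$ and the evaluation maps $\phi_x$, $x\in F$, feed inner amenability of $H$, giving case (2). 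The main obstacle is precisely this case separation together with the bookkeeping of atomlessness: restricting and conditioning a conjugation-invariant mean can leave atoms on finite conjugacy classes, so one must argue that in case (1) the constructed mean on $X$ is genuinely atomless (which follows since the averaged mean charges a point $x$ by at most $m(\{n: x\in\supp(n)\})$), that in case (3) the mean $\bar m$ is atomless, and that the dichotomy ``support spreads over an infinite orbit versus localizes on a finite orbit'' is exhaustive. Making these reductions clean is exactly the role played by the location lemma.
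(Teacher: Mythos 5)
Your implications ``(1) $\Rightarrow$ inner amenable'' and ``(3) $\Rightarrow$ inner amenable'' are correct and essentially identical to the paper's, and your computation of how conjugation by $\delta_x^h$ acts on the evaluation maps is right. But your argument for (2) rests on a false principle: the iterated-integral product of identical means on $H^F$ is \emph{not} invariant under permuting coordinates, because Fubini fails for finitely additive means. Concretely, take $H=\Zb$ and let $m_H$ be a weak${}^*$ cluster point of the point masses $\delta_n$ as $n\to+\infty$ (an atomless, conjugation-invariant mean on $\Zb$); for $A=\{(i,j)\in\Zb^2 : i<j\}$ one has $\int\!\int 1_A(i,j)\,dm_H(j)\,dm_H(i)=1$ while $\int\!\int 1_A(i,j)\,dm_H(i)\,dm_H(j)=0$, so whichever order defines $m_H^{\otimes 2}$, its pushforward under the coordinate swap is the other one, and they differ. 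Hence your pushforward mean need not be invariant under conjugation by an element of $K$ exchanging two points of $F$. This is fixable: average $m_H^{\otimes F}$ over the finitely many permutations of $F$ induced by $K$ (this preserves coordinatewise conjugation invariance and atomlessness), or follow the paper, which avoids products entirely: $N_0=\bigoplus_F H$ is inner amenable, hence so is $N_0C_G(N_0)$, which has finite index in $G$.

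The substantive gap is in the necessity direction: your trichotomy is not exhaustive as stated, and deferring to ``the location lemma'' does not close it. The negation of ``$\bar m$ is atomless with $\bar m(K_x)=1$ for all $x$'' does not yield ``mass concentrates on $N$.'' If $\bar m$ has an atom this can be repaired (Lemma~\ref{lem:convcomp} and atomlessness of $\check m \ast m$ give an atomless conjugation-invariant mean concentrating on $N$), but if $\bar m$ is atomless and merely $\bar m(K_{x_0})<1$ for some $x_0$, your outline produces nothing: the evaluation maps only give a left-invariant partial mean on $H$, i.e.\ amenability of $H$, which is none of (1), (2), (3), and no argument is given that any conjugation-invariant mean concentrates on $N$. (That one of the three conditions nonetheless holds in this case is true, but proving it \emph{is} the theorem; using the dichotomy as a step is circular.) Relatedly, the finite normal subgroup $M$ of elements of $N$ with finite $G$-conjugacy class never appears in your proposal, yet it is indispensable at exactly the points you acknowledge as ``bookkeeping'': the paper's support average $\varphi(m)$ is built from the $N$-components of elements of $G\setminus K$ (not from a restriction of $m$ to $N$, which may have zero mass); atomlessness of the mean fed to $H$ in case (2) comes from the condition $m(MK)<1$; and landing in case (3) when $m(MK)=1$ requires both the location lemma's conclusion $m(C_{G/M}(z))=1$ for all $z\in N$ (Lemma~\ref{lem:normal2}) \emph{and} the separate computation $C_{G/M}(z)\cap z^{-1}MK_0z\cap MK_0=MC_{K_0}(z)$, because conjugation by $z\in N$ centralizes elements of $K$ only modulo $M$. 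These steps, together with Proposition~\ref{prop:finind} to arrange $m(G_0)=1$ for all finite-index subgroups, constitute the paper's three-case analysis ({\bf C1}/{\bf C2}/{\bf C3}); none of them is a formal consequence of Lemma~\ref{lem:normal2} alone, so your proposal as written cannot be completed without essentially reconstructing that argument.
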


One of the key ingredients to the proof of Theorem \ref{thm:wreath1} is Lemma \ref{lem:normal2}, which gives a way of locating various conjugation invariant means on a group, relative to some normal subgroup. Lemma \ref{lem:normal2} also leads to a complete characterization of when the commutator subgroup of an inner amenable group is itself inner amenable (see Corollary \ref{cor:iacommutator} and the paragraph preceding it). To state a further consequence, we first make a definition. If $m$ is a mean on a group $G$, then we define $\mathrm{ker}(m)\defeq  \{ g\in G \, : \, m (C_{G}(g)) = 1 \}$. It is easy to see that $\mathrm{ker}(m)$ is a subgroup of $G$, and if $m$ is conjugation-invariant, then $\mathrm{ker}(m)$ is a normal subgroup of $G$.

\begin{thm}[Corollary~\ref{cor:ncor1}]\label{thm:ncor1}
Let $G$ be an inner amenable group with no nontrivial finite normal subgroups, and let $N$ be a normal subgroup of $G$. Then either there exists an atomless conjugation invariant mean on $G$ which concentrates on $N$, or else $N\leq \mathrm{ker}(m)$ for every conjugation invariant mean $m$ on $G$.

Moreover, there exists an atomless conjugation invariant mean $m$ on $G$ with either $m([N,N])=1$ or $N\leq \mathrm{ker}(m)$.
\end{thm}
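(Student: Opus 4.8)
The plan is to derive both assertions from the location lemma (Lemma~\ref{lem:normal2}) together with the two standing hypotheses. For the first assertion, I would read Lemma~\ref{lem:normal2} as supplying, for each conjugation invariant mean $m$ on $G$, the alternative that either a conjugation invariant mean can be \emph{located} on $N$ or $N\leq\mathrm{ker}(m)$. If the second alternative of the theorem fails, then some conjugation invariant mean $m_0$ has $N\not\leq\mathrm{ker}(m_0)$, and applying the location lemma to $m_0$ produces a conjugation invariant mean $m_1$ with $m_1(N)=1$.

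It then remains to make $m_1$ atomless, and this is exactly where the absence of nontrivial finite normal subgroups enters. I would decompose $m_1$ into its atomic and non-atomic parts; both are conjugation invariant and, since $m_1(N)=1$, both concentrate on $N$, so if the non-atomic part is nonzero its normalization is the desired atomless conjugation invariant mean on $N$. The only obstruction is that $m_1$ be purely atomic, in which case it is supported on the FC-center of $G$; here the hypothesis on finite normal subgroups is decisive, since the normal closure of a torsion element with finite conjugacy class is a finitely generated torsion FC-group, hence finite, hence trivial in our setting, forcing every atom away from the identity onto an element of infinite order. I expect the purely atomic case to be eliminated by combining this rigidity with the atomless mean furnished by inner amenability, though verifying this cleanly is a delicate point.

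For the ``moreover'' I would branch on the first assertion. If $N\leq\mathrm{ker}(m)$ for every conjugation invariant mean, then the atomless conjugation invariant mean provided by inner amenability already witnesses the claim. Otherwise the first assertion yields an atomless conjugation invariant mean $m_1$ with $m_1(N)=1$, and I would apply the same dichotomy to the subgroup $[N,N]$ (which is characteristic in $N$, hence normal in $G$): either there is an atomless conjugation invariant mean concentrating on $[N,N]$, giving $m([N,N])=1$, or $[N,N]\leq\mathrm{ker}(m)$ for every conjugation invariant mean.

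The heart of the matter is the last case, where I must deduce $N\leq\mathrm{ker}(m_1)$ from $m_1(N)=1$ and $[N,N]\leq\mathrm{ker}(m_1)$. Restricting $m_1$ to $N$ gives a conjugation invariant mean $\nu$ on $N$ with $[N,N]\leq\mathrm{ker}(\nu)$ (centralizers taken inside $N$). For fixed $n\in N$ the commutator map $x\mapsto [n,x]$ carries $N$ into $[N,N]$ and satisfies $[n,nxn^{-1}]=n[n,x]n^{-1}$, so it intertwines conjugation by $n$ on the two sides; pushing $\nu$ forward along it yields an $n$-invariant mean $\lambda$ on $[N,N]$ with $\lambda(\{1\})=\nu(C_N(n))$. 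The goal is to show $\lambda=\delta_1$, whence $n\in\mathrm{ker}(m_1)$. I expect this to be the main obstacle, since it is the one place where the kernel hypothesis on $[N,N]$ must be converted into triviality of the commutator cocycle rather than manipulated formally, and I would attack it with a stationarity argument in the spirit of Lemma~\ref{lem:stationary}, after reducing if necessary to idempotent means via Ellis's lemma.
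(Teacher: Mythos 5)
Your strategy (get everything from the location lemma) is the paper's strategy, but you have misread what that lemma delivers, and this misreading is what manufactures both of your ``delicate points.'' Alternative (1) of Lemma \ref{lem:normal2} (likewise of Lemma \ref{lem:normal}) produces an \emph{atomless} $K$-conjugation invariant mean concentrating on $P$; atomlessness comes for free and is not something to be restored afterwards. Taking $K=G_0=G$ and $P=N$ in Lemma \ref{lem:normal2}, the first assertion is immediate: either alternative (1) holds, which \emph{is} the first alternative of the theorem, or alternative (2) holds, and since the group $M$ appearing there is a finite subgroup normalized by $K=G$, the standing hypothesis forces $M=1$, so alternative (2) reads $m(C_G(h))=1$ for all $h\in N$ and all conjugation invariant $m$, i.e.\ $N\leq\mathrm{ker}(m)$ for every $m$. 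Note two things: the no-finite-normal-subgroups hypothesis is used exactly here, to kill $M$ (equivalently, to satisfy the hypothesis of Lemma \ref{lem:normal}) --- not in any atomless-upgrading step --- and your proposal never addresses the $C_{G/M}$ versus $C_G$ issue at all. Moreover, under your lossy reading (``a conjugation invariant mean located on $N$'') the repair you attempt in your second paragraph cannot work in principle, since such a mean could be $\delta_{1_G}$, from which no decomposition into atomic and non-atomic parts recovers anything.

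The genuine gap is in the ``moreover.'' By applying the first assertion to $[N,N]$ you retain, in the hard case, only the statement ``$[N,N]\leq\mathrm{ker}(m)$ for every conjugation invariant $m$,'' and you then try to bridge formally from this together with $m_1(N)=1$ to $N\leq\mathrm{ker}(m_1)$. The conclusion is true, but it is not a formal consequence of those two facts, and your proposed tools cannot reach it: the pushforward $\lambda=\Phi_*\nu$ with $\Phi(x)=[n,x]$ is invariant only under conjugation by $n$ --- conjugation by $g$ carries it to the pushforward under $x\mapsto[gng^{-1},x]$ --- so neither of your available hypotheses, both of which quantify over \emph{conjugation invariant} means on $G$, constrains $\lambda$ at all; and Ellis's lemma/stationarity (the tools of Theorem \ref{thm:fix_ccc}) produce idempotent means but cannot restore the invariance that your deterministic coupling $x\mapsto(x,nxn^{-1})$ destroys. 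That invariance is exactly what the proof of Lemma \ref{lem:normal} preserves by using the symmetric coupling $p^y\otimes p^y$ over $\alpha(K\cap N)$-orbits, so completing your plan would amount to reproving that lemma from scratch. The missing idea is a \emph{relativized} application of the location lemma, which is how the paper proceeds (case $(A.2)$ of Corollary \ref{cor:ncor1}): take $G_0=N$, $K=G$, $P=[N,N]$ in Lemma \ref{lem:normal2}. With $M=1$ as before, its alternative (2) states that \emph{every conjugation invariant mean concentrating on $N$} satisfies $N\leq\mathrm{ker}(m)$; since in the hard case alternative (1) fails, this applies to your $m_1$ and closes the argument in one line.
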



\begin{ack}
We would like to thank Yair Hartman for his many contributions to this project. We would also like to thank Amine Marrakchi for explaining to us another proof of Theorem \ref{thm:amalgam} and allowing us to include his argument in Remark \ref{rem:Amine}. RTD was supported in part by NSF grant DMS 1600904. BD was supported in part by French projects ANR-14-CE25-0004 GAMME and ANR-16-CE40-0022-01 AGIRA.\end{ack}

\section{Preliminaries}

For $G$ a group, $H\leq G$, and $g\in G$, we write
\[
g^H\defeq \{hgh^{-1}\mid h\in H\}.
\]
Let $m$ and $n$ be means on a group $G$. The {\bf convolution} of $m$ and $n$, denoted $m\ast n$, is the mean defined by
\[
(m\ast n )(A) \defeq  \int _{g\in G}n(g^{-1}A) \, dm (g)
\]
for $A\subseteq G$. We denote by $\check{m}$ the mean on $G$ defined by $\check{m}(A) \defeq  m (A^{-1})$ for $A\subseteq G$.

If either $m$ or $n$ is atomless, then $m\ast n$ is atomless as well; this is a direct computation under the assumption that $n$ is atomless, and it is a short exercise under the assumption that $m$ is atomless. Likewise, if $m$ and $n$ are both invariant under a subgroup $\mathcal{H}$ of $\mathrm{Aut}(G)$ then so are $m\ast n$ and $\check{m}$.

It follows readily from the definition that $m\mapsto m\ast n$ is continuous for the weak-$*$ topology but, the continuity of $n\mapsto m\ast n$ does not hold \emph{prima facie}.

\begin{lem}\label{lem:convcomp}
Let $m$ be a mean on $G$ and let $H$ be a subgroup of $G$. Then $(\check{m}\ast m)(H) \geq \sum _{gH \in G/H} m (gH)^2$. In particular, if there is some $g\in G$ such that $m(gH)>0$, then $(\check{m}\ast m)(H)> 0$.
\end{lem}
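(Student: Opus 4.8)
The plan is to compute $(\check m * m)(H)$ directly from the definition of convolution and then apply Cauchy--Schwarz (or more precisely the inequality relating an integral of squares to a sum of squared measures). First I would unwind the definition: by the formula for convolution,
\[
(\check m * m)(H) = \int_{g \in G} m(g^{-1} H) \, d\check m(g) = \int_{g \in G} m(gH) \, dm(g),
\]
where the second equality uses $d\check m(g) = dm(g^{-1})$ together with the substitution sending $g \mapsto g^{-1}$, noting that $(g^{-1})^{-1} H = gH$. So the quantity we must bound below is $\int_{g \in G} m(gH)\, dm(g)$.

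Next I would decompose $G$ into its left cosets of $H$. Define the function $f : G \to [0,1]$ by $f(g) \defeq m(gH)$; this is constant on each coset $gH$, taking the value $m(gH)$ there. The integral $\int_G f \, dm$ then splits as a sum over cosets:
\[
\int_{g \in G} m(gH) \, dm(g) = \sum_{gH \in G/H} \int_{gH} m(gH) \, dm = \sum_{gH \in G/H} m(gH) \cdot m(gH) = \sum_{gH \in G/H} m(gH)^2.
\]
The middle step is just that $f$ equals the constant $m(gH)$ on the coset $gH$, so its integral over that coset is $m(gH)$ times the $m$-measure of the coset, which is again $m(gH)$. This yields exactly the claimed identity $(\check m * m)(H) = \sum_{gH \in G/H} m(gH)^2$, which is even sharper than the stated inequality.

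For the final assertion, if $m(gH) > 0$ for some $g$, then that single term $m(gH)^2$ is strictly positive and already forces the whole sum to be positive, so $(\check m * m)(H) > 0$. The \textbf{main subtlety} I anticipate is the measure-theoretic justification of breaking the integral into a sum over the (possibly uncountable) index set $G/H$: means are only finitely additive functionals on the power set, not countably additive measures, so one must argue that the function $g \mapsto m(gH)$ takes positive values on at most countably many cosets (since the values $m(gH)$ for distinct cosets are measures of disjoint sets summing to at most $1$) and handle the integral against the finitely additive mean carefully, for instance by approximating $f$ from below by simple functions supported on finitely many cosets. This is routine but is the one place where one cannot be cavalier.
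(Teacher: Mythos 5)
Your proposal is correct and follows essentially the same route as the paper: unwind $(\check{m}\ast m)(H)$ to $\int_G m(xH)\,dm(x)$, note the integrand is constant on each coset, and reduce to finitely many cosets via finite additivity, which is exactly the paper's argument of summing over a finite set $F\subseteq G/H$ and taking the supremum. The only difference is that you assert full equality with $\sum_{gH}m(gH)^2$ rather than the inequality $\geq$; only the $\geq$ direction is needed for the lemma, and that is precisely what your ``approximate $f$ from below by simple functions supported on finitely many cosets'' remark (i.e., the paper's proof) establishes, so the unproved upper-bound half of your equality claim is harmless surplus.
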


\begin{proof}
For any finite subset $F\subseteq G/H$, by finite additivity of $m$ we have
\begin{align*}
(\check{m}\ast m)(H) = \int _{G}m(xH)\, dm (x) & \geq \sum _{gH\in F} \int _{gH}m(xH)\, dm (x) \\
&= \sum _{gH\in F}\int _{gH} m(gH) \, dm (x)\\
&= \sum _{gH\in F}m(gH)^2 .
\end{align*}
Taking the supremum over all such $F$ proves the lemma.
\end{proof}

\begin{lem}\label{lem:stationary}
Let $H$ be a subgroup of a group $G$ and let $m$ and $n$ be means on $G$. Assume that $n(H)=1$. If either $n\ast m = n$ or $m\ast n = n$ then $m(H)=1$ as well.
\end{lem}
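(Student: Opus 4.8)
The plan is to evaluate both sides of the given convolution identity at the subgroup $H$ itself and to read off $m(H)=1$ directly from the definition of convolution, treating the two cases separately. In both cases the essential point is that $H$ is a subgroup: for $g\in H$ one has $g^{-1}H=H$, while for $g\notin H$ the left coset $g^{-1}H$ is disjoint from $H$ and hence $n$-null, since $n(H)=1$ forces $n(G\setminus H)=0$.

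First I would treat the case $m\ast n = n$. Here
\[
(m\ast n)(H) = \int_G n(g^{-1}H)\, dm(g).
\]
For $g\in H$ we have $g^{-1}H=H$, so $n(g^{-1}H)=n(H)=1$; for $g\notin H$ the coset $g^{-1}H$ is contained in $G\setminus H$, so $n(g^{-1}H)\leq n(G\setminus H)=0$. Thus $g\mapsto n(g^{-1}H)$ is exactly the indicator function of $H$, and the integral equals $m(H)$. Since $(m\ast n)(H)=n(H)=1$, we conclude $m(H)=1$.

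For the case $n\ast m = n$ I would argue dually. Now
\[
(n\ast m)(H) = \int_G m(g^{-1}H)\, dn(g),
\]
and since $n(H)=1$ I would discard the contribution of $G\setminus H$ (a bounded integrand over an $n$-null set contributes $0$) and note that on $H$ the integrand is the constant $m(g^{-1}H)=m(H)$. Hence $(n\ast m)(H)=m(H)\cdot n(H)=m(H)$, and again comparing with $n(H)=1$ gives $m(H)=1$.

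The computation is short, so the only real care needed is in justifying the manipulations of integrals against finitely additive means: that a bounded function integrated over an $n$-null set contributes nothing, and that $\int_G f\, dn$ splits as the sum of the integrals over $H$ and over $G\setminus H$. These are standard facts about the integral of a bounded function against a finitely additive probability measure, and verifying them is the main (minor) technical point; the structural heart of the argument is simply the dichotomy for cosets $g^{-1}H$ relative to the subgroup $H$.
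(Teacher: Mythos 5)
Your proof is correct and follows essentially the same route as the paper's: in both cases one evaluates the convolution identity at $H$ and uses the coset dichotomy ($g^{-1}H=H$ for $g\in H$, while $g^{-1}H$ is disjoint from $H$, hence null for the mean concentrating on $H$, when $g\notin H$). The paper's proof performs exactly these reductions, so there is nothing to add beyond your (correctly identified) minor point about integrating bounded functions against finitely additive means.
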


\begin{proof}
Suppose first that $n\ast m = n$. Then
\begin{align*}
1=n(H)=(n\ast m)(H) &= \int _G m(g^{-1}H)\, dn (g) \\
&= \int _H m(g^{-1}H)\, dn (g) = \int _H m(H) \, dn (g) = m(H),
\end{align*}
where the fourth and last equalities hold since $n$ concentrates on $H$.

Suppose now that $m\ast n = n$. Then $n(g^{-1}H)=0$ for all $g\not\in H$, hence
\begin{align*}
1=n(H)=(m\ast n )(H) &= \int _G n(g^{-1}H)\, dm (g) \\
&= \int _H n(H)\, dm (g) = \int _H 1 \, dm (g) =m(H) . \qedhere
\end{align*}
\end{proof}

\begin{prop}\label{prop:finind}
Let $G$ be a discrete group and let $\mathcal{H}$ be a subgroup of $\mathrm{Aut}(G)$. Let $m$ be an atomless $\mathcal{H}$-invariant mean on $G$. Then there exists another atomless $\mathcal{H}$-invariant mean $n$ on $G$ satisfying:
\begin{enumerate}
\item[(i)] If $K$ is a subgroup of $G$ with $m(K)=1$ then $n(K)=1$.
\item[(ii)] If $K$ is a subgroup of $G$ with $m(K)=1$ then $n(L)=1$ for every $\mathcal{H}$-invariant subgroup $L$ of $G$ with $|K:K\cap L|<\infty$.
\end{enumerate}
In particular, if $G$ is inner amenable, then there is an atomless conjugation invariant mean $m$ on $G$, such that $m(G_0)=1$ for every finite index subgroup $G_0$ of $G$.
\end{prop}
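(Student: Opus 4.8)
The plan is to produce $n$ as a simultaneous solution, over all relevant subgroups at once, of the concentration requirements in (i) and (ii), via a weak-$*$ compactness argument whose finite stage is settled by one explicit construction. Write $\mathcal C$ for the family consisting of all subgroups $K\leq G$ with $m(K)=1$, together with all $\mathcal H$-invariant subgroups $L\leq G$ for which there exists $K$ with $m(K)=1$ and $|K:K\cap L|<\infty$. Conditions (i) and (ii) say exactly that $n(C)=1$ for every $C\in\mathcal C$, so it suffices to find an atomless $\mathcal H$-invariant mean $n$ with this property.

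The observation enabling compactness is that, with atomless understood as ``$m(\{g\})=0$ for all $g$'' (the meaning consistent with the convolution facts recorded above), the set of atomless means is weak-$*$ closed, being $\bigcap_{g\in G}\{m:m(\{g\})=0\}$; intersecting with the $\mathcal H$-invariant means gives a nonempty (it contains $m$) weak-$*$ compact set $\mathcal A$. Each requirement $\{n\in\mathcal A: n(C)=1\}$ is closed, so by the finite intersection property it is enough to show that any finite $C_1,\dots,C_k\in\mathcal C$ admit a common $\mu\in\mathcal A$ with $\mu(C_i)=1$. For each type-(ii) member fix a witness $K_i$ with $m(K_i)=1$ and $|K_i:K_i\cap C_i|<\infty$; let $K$ be the intersection of all type-(i) members with all the $K_i$, and $L$ the intersection of all type-(ii) members. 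Then $m(K)=1$ by finite additivity, $L$ is $\mathcal H$-invariant, and $M\defeq K\cap L$ has finite index in $K$, being a finite intersection of finite-index subgroups of $K$. Since $M\subseteq C_i$ for all $i$, any $\mu\in\mathcal A$ with $\mu(M)=1$ suffices, and the whole matter reduces to the following core claim: \emph{if $m$ is atomless and $\mathcal H$-invariant with $m(K)=1$, and $L$ is $\mathcal H$-invariant with $M\defeq K\cap L$ of finite index in $K$, then some atomless $\mathcal H$-invariant mean $\mu$ has $\mu(M)=1$.}

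To prove the core claim I would pass to $m'\defeq\check m\ast m$, again atomless and $\mathcal H$-invariant. A direct computation gives $(\check m\ast m)(A)=\int_G m(gA)\,dm(g)$, and hence $m'(K)=1$: for $g\in K$ one has $gK=K$, while the complement of $K$ is $m$-null. Crucially, Lemma~\ref{lem:convcomp} applied to $H=L$ gives $m'(L)\geq\sum_{gL\in G/L}m(gL)^2>0$, since $K$ meets only finitely many cosets of $L$ (as $M$ has finite index in $K$) and these carry total $m$-mass $m(K)=1$, so at least one has positive mass. Now $m'(K)=1$ forces $m'(L\setminus K)=0$, whence $m'(L)=m'(M)>0$, and the conditional mean $\mu(A)\defeq m'(A\cap L)/m'(L)$ is atomless, satisfies $\mu(M)=1$, and is $\mathcal H$-invariant because $L$ is (so $h^{-1}A\cap L=h^{-1}(A\cap L)$ for $h\in\mathcal H$). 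This proves the core claim, and with it the existence of $n$. For the final clause, apply the statement with $\mathcal H=\operatorname{Inn}(G)$ to an atomless conjugation-invariant mean $m$ (which exists as $G$ is inner amenable), so that $\mathcal H$-invariant means normal; given a finite-index $G_0$, its normal core $N$ is finite-index normal, hence $\mathcal H$-invariant, so taking $K=G$ in (ii) gives $n(N)=1$ and thus $n(G_0)=1$.

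The main obstacle is the tension between property (i), which concerns subgroups $K$ that need not be $\mathcal H$-invariant, and the demand that $n$ itself be $\mathcal H$-invariant: one cannot simply condition $m$ on $K$ without breaking $\mathcal H$-invariance. The resolution is to condition on the $\mathcal H$-invariant subgroup $L$ instead, while exploiting the identity $m'(K)=1$ to force this conditioning to concentrate on $K$ as well (equivalently on $M$). A second delicate point is that $n$ must give full measure to every member of $\mathcal C$ at once even though their intersection may be trivial; this is exactly what the finite-additivity and compactness argument supplies, once atomlessness is recognized as a closed condition.
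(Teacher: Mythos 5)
Your proof is correct and follows essentially the same route as the paper's: both hinge on the auxiliary mean $\check{m}\ast m$ (conull on every $K$ with $m(K)=1$, and of positive mass on each relevant $L$ by Lemma~\ref{lem:convcomp}), followed by normalized restriction to the $\mathcal{H}$-invariant subgroup $L$, with a weak-$*$ compactness argument to satisfy all conditions simultaneously. The only difference is cosmetic: the paper takes a cluster point of the net $L\mapsto n_L$ over the directed family $\mathcal{C}$, whereas you apply the finite intersection property to the closed sets $\{n\in\mathcal{A}:n(C)=1\}$, which is an equivalent packaging of the same compactness principle.
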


\begin{proof}
Let $\mathcal{C}_1$ be the collection of all subgroup $K$ of $G$ with $m(K)=1$, and let $\mathcal{C}$ be the collection of all $\mathcal{H}$-invariant subgroups $L$ of $G$ with $|K:K\cap L|<\infty$ for some $K\in \mathcal{C}_1$. Observe that $\mathcal{C}$ is a directed set under reverse inclusion: if $L_0 , L_1 \in \mathcal{C}$ are $\mathcal{H}$-invariant and $K_0,K_1\in \mathcal{C}_1$ are such that $|K_i:K_i\cap L_i|<\infty$, then $K_0\cap K_1\in \mathcal{C}_1$ and $|K_0\cap K_1 : K_0\cap K_1\cap L_0\cap L_1|<\infty$ since both $|K_0\cap K_1 : (K_0\cap L_0)\cap K_1 |\leq |K_0:K_0\cap L_0 | <\infty$ and $|(K_0\cap L_0)\cap K_1 : (K_0\cap L_0) \cap (K_1 \cap L_1)|\leq |K_1 : K_1\cap L_1|<\infty$.

Since $m$ is atomless and $\mathcal{H}$-invariant, so is $\check{m}\ast m$. If $K\in \mathcal{C}_1$ then we have $(\check{m}\ast m)(K)=\int _{K}m(kK)\, dm (k) = \int _{K}m(K)\, dm (k) = 1$. Moreover, if $K\in \mathcal{C}_1$ and $L$ is any subgroup of $G$ with $|K:K\cap L|<\infty$, then since $m$ is finitely additive there must be some $g_0\in G$ such that $m(g_0L)>0$, and hence $(\check{m}\ast m)(L)>0$ by Lemma \ref{lem:convcomp}. Thus, if $L$ is additionally $\mathcal{H}$-invariant (i.e., if $L\in \mathcal{C}$), then the normalized restriction, $n_L$, of $\check{m}\ast m$ to $L$, (defined by $n_L(A)\defeq  (\check{m}\ast m)(A\cap L)/(\check{m}\ast m)(L)$) is an $\mathcal{H}$-invariant atomless mean with $n_L(L)=1$ and $n_L(K)=1$ for all $K\in \mathcal{C}_1$. The assignment $L \mapsto n_L$ is then a net from the directed set $\mathcal{C}$ to the compact space of all atomless $\mathcal{H}$-invariant means on $G$. Any cluster point $n$ of this net then satisfies (i) and (ii).
\end{proof}

The following proposition will be improved significantly in Lemma \ref{lem:normal}, although it is important enough that we state it now.

\begin{prop}\label{prop:normvsquot}
Suppose that $G$ is inner amenable and let $N$ be a normal subgroup of $G$. Then either
\begin{enumerate}
\item There is an atomless $G$-conjugation invariant mean on $N$, or
\item $G/N$ is inner amenable.
\end{enumerate}
In particular, either $N$ is inner amenable or $G/N$ is inner amenable.
\end{prop}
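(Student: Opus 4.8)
The plan is to start from an atomless conjugation invariant mean $m$ on $G$, which exists because $G$ is inner amenable, and to dichotomize according to how $m$ distributes its mass among the cosets of $N$. The most natural first attempt is to push $m$ forward along the quotient map $\pi\colon G\to G/N$, setting $\ol{m}(B)\defeq m(\pi^{-1}(B))$ for $B\subseteq G/N$. Because $N$ is normal and $m$ is conjugation invariant, $\ol{m}$ is automatically a conjugation invariant mean on $G/N$; the only property that can fail is atomlessness, since $\ol{m}(\{gN\})=m(gN)$ need not vanish. This is exactly the obstacle around which the whole argument is organized.

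This suggests the case split. In the first case, suppose $m(gN)=0$ for every $g\in G$. Then the pushforward $\ol{m}$ constructed above is atomless, and being conjugation invariant it witnesses that $G/N$ is inner amenable, which is alternative (2).

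In the second case, suppose $m(gN)>0$ for some $g\in G$. Here the pushforward fails to be atomless, and this is the crux. The fix is to replace $m$ by the convolution $m'\defeq \check{m}\ast m$. By the remarks in the preliminaries, $m'$ is again atomless (since $m$ is) and again conjugation invariant (since both $m$ and $\check{m}$ are). The point of passing to $m'$ is that Lemma~\ref{lem:convcomp}, applied with $H=N$, gives $m'(N)\geq \sum_{gN\in G/N} m(gN)^2$, and the right-hand side is strictly positive precisely because some coset carries mass. Thus $m'(N)>0$, and I can form the normalized restriction of $m'$ to $N$, namely $m_N(A)\defeq m'(A\cap N)/m'(N)$ for $A\subseteq N$. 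Since $N$ is normal and $m'$ is conjugation invariant, $m_N$ is an atomless $G$-conjugation invariant mean on $N$, which is alternative (1).

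Finally, the ``in particular'' clause is immediate: in alternative (1) the mean $m_N$ is in particular invariant under conjugation by elements of $N$, so it witnesses inner amenability of $N$, while alternative (2) is literally inner amenability of $G/N$. I expect the main obstacle to be nothing more than the potential failure of atomlessness of the pushforward, and the decisive idea is that Lemma~\ref{lem:convcomp} converts positive mass on any single coset of $N$ into positive mass on $N$ itself; once that is in hand, the restriction to $N$ is routine. It is worth noting that this is precisely the role Lemma~\ref{lem:convcomp} was set up to play.
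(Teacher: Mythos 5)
Your proof is correct and follows essentially the same route as the paper's: push $m$ forward to $G/N$, and if an atom (equivalently, a coset $gN$ of positive $m$-mass) obstructs atomlessness, pass to $\check{m}\ast m$ and invoke Lemma~\ref{lem:convcomp} to get positive mass on $N$, then take the normalized restriction. The only cosmetic difference is that you phrase the dichotomy in terms of coset masses rather than atoms of the pushforward, which is the same condition.
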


\begin{proof}
Let $m$ be an atomless conjugation invariant mean on $G$. Let $p:G\rightarrow G/N$ denote the natural projection map. Then $p_*m$ is a conjugation invariant mean on $G/N$. If $p_*m$ is atomless, then $G/N$ is inner amenable so we are done. Otherwise, if $p_*m$ has an atom, then there is some $g\in G$ such that $m(gN)>0$. Then $(\check{m}\ast m)(N) >0$ by Lemma \ref{lem:convcomp}, so the normalized restriction of $\check{m}\ast m$ to $N$ is an atomless $G$-invariant mean on $N$.
\end{proof}

\begin{prop}\label{prop:finnorm}
Let $N$ be a finite normal subgroup of a group $G$. Then for every conjugation invariant mean $m_0$ on $G/N$, there is a conjugation invariant mean $m$ on $G$ which projects to $m_0$.
\end{prop}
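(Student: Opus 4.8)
The plan is to build $m$ by smearing, for each point of $G/N$, the mass that $m_0$ assigns to it uniformly across the corresponding fiber of the projection $p\colon G\to G/N$. Since $N$ is finite, every fiber $p^{-1}(\bar g)=gN$ has exactly $|N|$ elements, so for each $A\subseteq G$ the function
\[
f_A\colon G/N\to[0,1],\qquad f_A(\bar g)\defeq \frac{|A\cap p^{-1}(\bar g)|}{|N|},
\]
is a well-defined bounded function and hence lies in $\ell^\infty(G/N)$. I would then define $m(A)\defeq \int_{G/N}f_A\,dm_0$, where the integral is the value of $m_0$, viewed as a positive normalized functional, on $f_A$.

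The first two properties are routine. That $m$ is a mean follows from the finite additivity of $\bar g\mapsto|A\cap p^{-1}(\bar g)|$: if $A$ and $B$ are disjoint then $f_{A\cup B}=f_A+f_B$, so $m$ is finitely additive by linearity of $m_0$, while $f_\emptyset\equiv 0$ and $f_G\equiv 1$ give $m(\emptyset)=0$ and $m(G)=1$, and $0\le f_A\le 1$ forces $m$ to take values in $[0,1]$. That $p_\ast m=m_0$ follows from the observation that, for $B\subseteq G/N$, a fiber $p^{-1}(\bar g)$ is either contained in $p^{-1}(B)$ (if $\bar g\in B$) or disjoint from it (if $\bar g\notin B$); hence $f_{p^{-1}(B)}=\mathbf 1_B$ and $m(p^{-1}(B))=m_0(B)$.

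The one step that genuinely uses the normality of $N$, and which I expect to be the crux of the argument, is conjugation invariance. Fix $x\in G$ and $A\subseteq G$. Conjugation by $x^{-1}$ is a bijection of $G$ that carries $xAx^{-1}$ onto $A$ and, because $x^{-1}(gN)x=(x^{-1}gx)N$ when $N\normal G$, carries the fiber $p^{-1}(\bar g)$ onto $p^{-1}(\bar x^{-1}\bar g\bar x)$. It therefore restricts to a bijection of $xAx^{-1}\cap p^{-1}(\bar g)$ onto $A\cap p^{-1}(\bar x^{-1}\bar g\bar x)$, yielding the key identity $f_{xAx^{-1}}(\bar g)=f_A(\bar x^{-1}\bar g\bar x)$ for all $\bar g$. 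In other words $f_{xAx^{-1}}=f_A\circ c$, where $c$ denotes conjugation by $\bar x^{-1}$ on $G/N$. Since $c$ is a bijection of $G/N$ preserving the conjugation invariant mean $m_0$, integrating against $m_0$ is unaffected by precomposing with $c$, so $m(xAx^{-1})=\int f_A\circ c\,dm_0=\int f_A\,dm_0=m(A)$. This shows $m$ is conjugation invariant and completes the construction. (As a bonus not needed here, the same formula shows $m$ is atomless whenever $m_0$ is, since $f_{\{h\}}=|N|^{-1}\mathbf 1_{\{p(h)\}}$.)
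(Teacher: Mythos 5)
Your construction is exactly the paper's: the paper defines $m(A)\defeq \int_{gN\in G/N}|A\cap gN|/|N|\,dm_0(gN)$ and dismisses the verification with ``this clearly works,'' while you spell out the finite additivity, the identity $p_*m=m_0$, and the key computation $f_{xAx^{-1}}=f_A\circ c$ that underlies conjugation invariance. Your proof is correct and simply fills in the details the paper omits.
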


\begin{proof}
Let $m_0$ be a conjugation invariant mean on $G/N$. Define the mean $m$ on $G$ by $m(A)\defeq \int _{gN\in G/N} |A\cap gN|/|N| \, dm_0 (gN)$. This clearly works.
\end{proof}

\begin{prop}\label{prop:coamen}
Let $H$ be a subgroup of a group $G$. Assume that
\begin{itemize}
\item[(a)] There is an atomless $H$-conjugation invariant mean $m_H$ on $G$,
\item[(b)] The action $G\curvearrowright G/H$ is amenable with $G$-invariant mean $m_{G/H}$.
\end{itemize}
Then $G$ is inner amenable, as witnessed by the atomless $G$-conjugation invariant mean
\[
m\defeq  \int _{gH\in G/H} gm_Hg^{-1} \, dm _{G/H}(gH)
\]
In particular, if $N$ is a normal subgroup of $G$ which is inner amenable, and if $G/N$ is amenable, then $G$ is inner amenable and, moreover, there is an atomless $G$-conjugation invariant mean $m$ on $G$ with $m(N)=1$.
\end{prop}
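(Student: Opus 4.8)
The plan is to verify directly that the displayed formula defines an atomless $G$-conjugation invariant mean, and then to feed suitable data into this construction for the ``in particular'' clause. Throughout I use the convention $(gm_Hg^{-1})(A)=m_H(g^{-1}Ag)$. The first point to settle, before the formula even makes sense, is that $gH\mapsto gm_Hg^{-1}$ is well defined on the coset space $G/H$. If $g'=gh$ with $h\in H$, then for any $A\subseteq G$ one has $(g'm_Hg'^{-1})(A)=m_H(h^{-1}g^{-1}Agh)=m_H(g^{-1}Ag)$, the last equality being exactly the $H$-conjugation invariance of $m_H$ from hypothesis (a). Hence $gH\mapsto (gm_Hg^{-1})(A)$ is a genuine bounded function on $G/H$ for each fixed $A$, and integrating it against the mean $m_{G/H}$ produces a well-defined number $m(A)$. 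Finite additivity and normalization of $A\mapsto m(A)$ are inherited from those of each $gm_Hg^{-1}$ via linearity of the integral, so $m$ is indeed a mean. Atomlessness is then immediate: since $m_H$ is atomless, $(gm_Hg^{-1})(\{x\})=m_H(\{g^{-1}xg\})=0$ for every $x\in G$ and every $g$, whence $m(\{x\})=\int_{G/H}0\,dm_{G/H}=0$.

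The heart of the matter is $G$-conjugation invariance, which I would derive from the $G$-invariance of $m_{G/H}$ by a change of variables. Fix $k\in G$ and $A\subseteq G$. A direct computation gives $(gm_Hg^{-1})(kAk^{-1})=m_H(g^{-1}kAk^{-1}g)=((k^{-1}g)m_H(k^{-1}g)^{-1})(A)$, so that the function $gH\mapsto (gm_Hg^{-1})(kAk^{-1})$ equals $\Psi\circ\sigma$, where $\Psi\colon g'H\mapsto (g'm_Hg'^{-1})(A)$ is the (well-defined) integrand attached to $A$ and $\sigma\colon gH\mapsto k^{-1}gH$ is a translation of the $G$-action on $G/H$. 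Since $m_{G/H}$ is invariant under left translation (hypothesis (b)), one has $\sigma_*m_{G/H}=m_{G/H}$, and therefore $m(kAk^{-1})=\int\Psi\circ\sigma\,dm_{G/H}=\int\Psi\,d(\sigma_*m_{G/H})=\int\Psi\,dm_{G/H}=m(A)$. This proves $m$ is $G$-conjugation invariant and establishes the main assertion.

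For the ``in particular'' clause I would apply the construction with $H=N$. As $N$ is inner amenable, choose an atomless $N$-conjugation invariant mean on $N$ and regard it as a mean $m_N$ on $G$ concentrated on $N$; normality of $N$ guarantees that conjugation by elements of $N$ preserves $N$, so $m_N$ remains atomless and $N$-conjugation invariant as a mean on $G$, yielding (a). Amenability of $G/N$ means the group $G/N$ carries a left-invariant mean, which is precisely a $G$-invariant mean for the action $G\acts G/N$, yielding (b). The construction then delivers an atomless $G$-conjugation invariant mean $m$, and since $g^{-1}Ng=N$ for every $g$ by normality, $m(N)=\int_{G/N}m_N(g^{-1}Ng)\,dm_{G/N}=\int_{G/N}m_N(N)\,dm_{G/N}=1$. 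I expect no step here to be genuinely difficult; the one point requiring care—and the place where hypothesis (a) is indispensable—is the well-definedness of the integrand on $G/H$, without which the very formula for $m$ would be meaningless.
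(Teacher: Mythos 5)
Your proof is correct and is precisely the ``straightforward computation'' that the paper leaves to the reader: you verify well-definedness of the integrand on $G/H$ via the $H$-conjugation invariance of $m_H$, obtain $G$-conjugation invariance from the translation invariance of $m_{G/H}$ by a change of variables, and specialize to $H=N$ for the final clause. Nothing further is needed.
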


\begin{proof}
This is a straightforward computation.
\end{proof}

\begin{prop}\label{prop:almostisom}
Let $G$ be a group.
\begin{enumerate}
\item (Giordano, de la Harpe \cite{GH91}) Let $H$ be a finite index subgroup of $G$. Then $G$ is inner amenable if and only if $H$ is inner amenable.
\item Let $N$ be a finite normal subgroup of $G$. Then $G$ is inner amenable if and only if $G/N$ is inner amenable.
\end{enumerate}
\end{prop}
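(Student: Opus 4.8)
The plan is to handle the two parts separately, using the structural propositions already established; Part (2) is the more direct, so I would treat it first.

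For part (2), suppose first that $G/N$ is inner amenable, witnessed by an atomless conjugation invariant mean $m_0$ on $G/N$. Proposition~\ref{prop:finnorm} produces a conjugation invariant mean $m$ on $G$ projecting to $m_0$; since $N$ is finite, the explicit formula for $m$ in that proposition spreads mass uniformly over each finite fiber $gN$, so that $m(\{g\}) = m_0(\{gN\})/|N| = 0$ and $m$ is atomless. Hence $G$ is inner amenable. Conversely, if $m$ is an atomless conjugation invariant mean on $G$ and $p : G \to G/N$ is the quotient map, then $p_* m$ is a conjugation invariant mean on $G/N$, and it is atomless because each coset $gN$ is finite, so that $(p_*m)(\{gN\}) = m(gN) = \sum _{x \in gN} m(\{x\}) = 0$. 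Thus $G/N$ is inner amenable.

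For part (1) --- the Giordano--de la Harpe theorem --- I would give a short proof within our framework rather than appeal directly to \cite{GH91}. Suppose $H$ is inner amenable, witnessed by an atomless $H$-conjugation invariant mean $n$ on $H$. Extending $n$ by zero, i.e. setting $m_H(A) \defeq n(A \cap H)$, yields an atomless $H$-conjugation invariant mean on $G$ (it remains $H$-conjugation invariant because conjugation by elements of $H$ preserves $H$ setwise, and atomless since its atoms would be atoms of $n$). As $[G:H] < \infty$, the finite set $G/H$ carries the uniform $G$-invariant mean, so the action $G \curvearrowright G/H$ is amenable. Conditions (a) and (b) of Proposition~\ref{prop:coamen} are therefore satisfied, and that proposition gives that $G$ is inner amenable.

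For the converse direction of part (1), the key point is that an arbitrary conjugation invariant mean on $G$ need not give positive measure to $H$, so I would invoke the strengthening in Proposition~\ref{prop:finind}: inner amenability of $G$ yields an atomless conjugation invariant mean $m$ on $G$ with $m(G_0) = 1$ for every finite index subgroup $G_0$, in particular $m(H) = 1$. Restricting $m$ to $H$ then gives an atomless mean on $H$, and it is $H$-conjugation invariant since $m$ is $G$-conjugation invariant and $hHh^{-1} = H$ for $h \in H$; hence $H$ is inner amenable. The only delicate ingredient throughout is the preservation of atomlessness under these operations, which is immediate from the finiteness of $N$, of the cosets $gN$, or of the index $[G:H]$; the genuine input is Proposition~\ref{prop:finind}, without which the converse half of part (1) would fail for a general conjugation invariant mean.
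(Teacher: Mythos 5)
Your proof is correct and follows exactly the paper's route: the paper's own proof is the one-line statement that (1) follows from Propositions~\ref{prop:finind} and \ref{prop:coamen} and (2) from Proposition~\ref{prop:finnorm}, and your argument simply fills in the details of those same reductions (extension by zero plus the averaged mean of Proposition~\ref{prop:coamen} for one direction of (1), the finite-index concentration from Proposition~\ref{prop:finind} for the other, and lifting/pushing forward along the finite fibers for (2)). The atomlessness verifications you supply are exactly the implicit steps the paper leaves to the reader.
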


\begin{proof}
(1) follows from Propositions \ref{prop:finind} and \ref{prop:coamen}, and (2) follows from Proposition  \ref{prop:finnorm}.
\end{proof}

\section{Location lemma and wreath products}

\subsection{Lifting almost invariant probability measures}
Let $X$ and $Y$ be $G$-sets and let $\pi :X\rightarrow Y$ be a $G$-map from $X$ to $Y$. Let $\tilde{X}\defeq  X\otimes _{\pi} X = \{ (x_0,x_1)\in X^2 \, : \, \pi (x_0)=\pi (x_1)\}$, so that $\tilde{X}$ is a $G$-invariant subset of $X^2$ (for the diagonal $G$-action). Let $p\in \ell ^1(X)$ be a probability vector on $X$ (i.e., a nonnegative unit vector), and view $p$ as a probability measure on $X$. For $y\in Y$ let $p^y$ be the normalized restriction of $p$ to $\pi ^{-1}(y)$ (put $p^y=0$ if $p(\pi ^{-1}(y))=0$). Define $\tilde{p}\defeq  \sum _{y\in Y}p(\pi ^{-1}(y))(p^y\otimes p^y)$, so that $\tilde{p}$ is a probability vector on $\tilde{X}$.
\begin{lem}\label{lem:tildep}
For any $g\in G$ we have $\| g\tilde{p} - \tilde{p}\| _1 \leq 5\| gp - p \| _1$.
\end{lem}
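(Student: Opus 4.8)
The plan is to decompose both sides fiberwise over $Y$ and reduce to a single estimate on each fiber. Since $\pi$ is $G$-equivariant, $g$ carries $\pi^{-1}(y)$ bijectively onto $\pi^{-1}(gy)$; in particular the fibers $\{\pi^{-1}(y)\}_{y\in Y}$ partition $X$ and the squares $\{\pi^{-1}(y)^2\}_{y\in Y}$ partition $\tilde{X}$. For $y\in Y$ write $P_y\defeq p\rest\pi^{-1}(y)$ and $Q_y\defeq (gp)\rest\pi^{-1}(y)$ for the restrictions of $p$ and $gp$ to the fiber, with masses $\alpha_y\defeq \|P_y\|_1=p(\pi^{-1}(y))$ and $\beta_y\defeq \|Q_y\|_1=p(\pi^{-1}(g^{-1}y))$. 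Because the fibers partition $X$, we immediately get $\|gp-p\|_1=\sum_{y\in Y}\|Q_y-P_y\|_1$.

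First I would record the fiberwise form of $\tilde{p}$ and $g\tilde{p}$. By definition, $\tilde{p}$ restricted to $\pi^{-1}(y)^2$ equals $\alpha_y(p^y\otimes p^y)=\tfrac{1}{\alpha_y}P_y\otimes P_y$ (and is $0$ when $\alpha_y=0$). A short equivariance computation shows that $g p^{g^{-1}y}$ is supported on $\pi^{-1}(y)$ and equals $Q_y/\beta_y$, so that $g\tilde{p}$ restricted to $\pi^{-1}(y)^2$ equals $\tfrac{1}{\beta_y}Q_y\otimes Q_y$. Since the squares of the fibers partition $\tilde{X}$, this yields
\[
\|g\tilde{p}-\tilde{p}\|_1=\sum_{y\in Y}\left\|\tfrac{1}{\beta_y}Q_y\otimes Q_y-\tfrac{1}{\alpha_y}P_y\otimes P_y\right\|_1 .
\]
It therefore suffices to prove the pointwise-in-$y$ bound $\bigl\|\tfrac{1}{\beta}Q\otimes Q-\tfrac{1}{\alpha}P\otimes P\bigr\|_1\leq 5\|Q-P\|_1$ for nonnegative vectors $P,Q$ of masses $\alpha,\beta$, and then sum.

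For the per-fiber bound I would first dispose of the degenerate cases $\alpha=0$ or $\beta=0$, where the left-hand side is simply $\beta$ or $\alpha$, which equals $\|Q-P\|_1$. When $\alpha,\beta>0$, normalize by setting $u\defeq P/\alpha$ and $v\defeq Q/\beta$, so that $u,v$ are probability vectors, the left-hand side becomes $\|\beta(v\otimes v)-\alpha(u\otimes u)\|_1$, and $\|Q-P\|_1=\|\beta v-\alpha u\|_1$. The key step is the telescoping identity
\[
\beta(v\otimes v)-\alpha(u\otimes u)=v\otimes(\beta v-\alpha u)+(v-u)\otimes(\alpha u),
\]
which, after taking $\ell^1$-norms and using $\|u\|_1=\|v\|_1=1$, gives $\|\beta(v\otimes v)-\alpha(u\otimes u)\|_1\leq \|\beta v-\alpha u\|_1+\alpha\|v-u\|_1$.

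The remaining, and really the only delicate, point is controlling the term $\alpha\|v-u\|_1$ arising from the differing fiber masses. Here I would write $\alpha(v-u)=(\alpha-\beta)v+(\beta v-\alpha u)$ and apply the reverse triangle inequality $|\alpha-\beta|=\bigl|\|\alpha u\|_1-\|\beta v\|_1\bigr|\leq\|\beta v-\alpha u\|_1$, which gives $\alpha\|v-u\|_1\leq 2\|\beta v-\alpha u\|_1$. Combining the two inequalities yields $\|\beta(v\otimes v)-\alpha(u\otimes u)\|_1\leq 3\|\beta v-\alpha u\|_1=3\|Q-P\|_1$, comfortably within the claimed constant $5$. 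Summing over $y\in Y$ and comparing with the fiberwise expression for $\|gp-p\|_1$ completes the proof. The main obstacle is exactly the bookkeeping of the normalization mismatch between fibers, which the reverse triangle inequality handles cleanly; the rest is the standard ``difference of tensor squares'' telescoping.
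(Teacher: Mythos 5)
Your proof is correct, and it actually establishes the sharper bound $\| g\tilde{p} - \tilde{p}\| _1 \leq 3\| gp - p \| _1$. The paper's argument shares the same skeleton---decompose over the fibers of $\pi$ and telescope a difference of tensor squares---but organizes the estimates differently: it first splits off the mass-difference term $\sum _{y} |q(y)-q(gy)| \leq \| gp-p\| _1$ (where $q(y)=p(\pi ^{-1}(y))$), then telescopes the difference of the two \emph{normalized} tensor squares $gp^y\otimes gp^y - p^{gy}\otimes p^{gy}$, paying a factor of $2$, and finally converts $q(gy)\| gp^y - p^{gy}\| _1$ back into unnormalized differences at the cost of another two $\| gp-p\| _1$-sized errors, arriving at $1+2+2=5$. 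You instead reduce everything to a self-contained per-fiber inequality about two nonnegative vectors, $\bigl\| \tfrac{1}{\beta}Q\otimes Q - \tfrac{1}{\alpha}P\otimes P \bigr\| _1 \leq 3\| Q-P\| _1$, telescoping with one normalized and one unnormalized factor (namely $v\otimes (\beta v - \alpha u) + (v-u)\otimes \alpha u$) and handling the normalization mismatch in a single stroke via the reverse triangle inequality $|\alpha - \beta | \leq \| \beta v - \alpha u \| _1$. This buys two things: a cleaner modular structure, in which the $G$-equivariance is used only to identify the fiberwise form of $g\tilde{p}$ and all the analysis is a statement about a single fiber, and a better constant ($3$ versus $5$). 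Since the lemma is only ever used with an unspecified multiplicative constant (it feeds into a weak${}^*$ limiting argument in Lemma \ref{lem:normal}), both versions are equally serviceable, but your bookkeeping is genuinely tighter.
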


\begin{proof}
For each $y\in Y$ let $q(y)\defeq  p(\pi ^{-1}(y))$ so that $\tilde{p}= \sum _{y\in Y}q(y)(p^y\otimes p^y)$. We write $gp^y$ for the translate of the function $p^y$ by $g$. We have
\begin{align*}
\| &g\tilde{p} - \tilde{p}\| _1 = \| \sum _{y\in Y} \big( q(y)(gp^y\otimes gp^y)- q(gy)(p^{gy}\otimes p^{gy})\big)\| _1 \\
&\leq \sum _{y\in Y} |q(y) - q(gy)| \| gp^y\otimes gp^y\| _1 + \sum _{y\in Y} q(gy)\| (gp^y\otimes gp^y) - (p^{gy}\otimes p^{gy}) \| _1
\end{align*}
Since $\| gp^y\otimes gp^y \| _1 \leq 1$ (possibly $p^y=0$), the first sum is bounded by $\| gp - p \| _1$. Let $Y_1 \defeq  \{ y \in Y\, : \, q(y)> 0 \}$. The second sum is bounded by
\begin{align*}
\sum _{y\in Y} q(gy) &\big( \| (gp^y- p^{gy})\otimes gp^y\| _1 + \| p^{gy}\otimes (gp^y- p^{gy}) \| _1 \big) \\
&\leq 2\sum _{y\in Y} q(gy)\| gp^y-p^{gy} \| _1 \\
&=2\sum _{y\in Y} \sum _{x\in \pi ^{-1}(gy)} |q(gy)p^y(g^{-1}x) - p(x)| \\
&\leq 2\| gp-p \| _1 + 2 \sum _{y\in Y} \sum _{x\in \pi ^{-1}(gy)} |q(gy)p^y(g^{-1}x) - p(g^{-1}x)| \\
&= 2\| g p - p \| _1 + 2 \sum _{y\in Y_1}|\frac{q(gy)}{q(y)} - 1| \sum _{x\in\pi ^{-1}(gy)}p(g^{-1}x) \\
&\leq 2\| gp - p \| _1 + 2\sum _{y\in Y} |q(gy)-q(y)|\\
&\leq 4 \| gp-p \| _1 .\qedhere
\end{align*}
\end{proof}

\subsection{Conjugation invariant means on normal subgroups}
In what follows, for a subgroup $M$ of a group $G$, and a nonempty subset $S$ of $G$, we define
\[
C_{G/M}(S) \defeq  \{ g\in G \, : \, g(sM)g^{-1}=sM \text{ for  all }s\in S \} .
\]
For $h\in G$ we write $C_{G/M}(h)$ for $C_{G/M}(\{ h \} )$. If $M$ is the trivial subgroup, we simply denote $C_G(S)$ for the centralizer of $S$ in $G$.%
\begin{prop}\label{prop:CGM}
Let $M$ be a subgroup of $G$, let $S$ be a nonempty subset of $G$, and let $\langle S\rangle$ be the subgroup generated by $S$. Then:
\begin{enumerate}
\item $C_{G/M}(S)$ is a subgroup of $G$ contained in the normalizer, $N_G(M)$, of $M$ in $G$.
\item Suppose that $S\subseteq N_G(M)$. Then $C_{G/M}(\langle S\rangle)=C_{G/M}(S)$.
\item Suppose that $M$ and $S$ are finite and $S\subseteq N_G(M)$. Then $C_G(S)\cap C_G(M)$ is a finite index subgroup of $C_{G/M}(S)$.
\end{enumerate}
\end{prop}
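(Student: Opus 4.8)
The plan is to prove the three claims about $C_{G/M}(S)$ in sequence, since each builds on the previous one. Throughout I write $K \defeq C_{G/M}(S)$, and recall that $g \in K$ means $g(sM)g^{-1} = sM$, i.e. $gsMg^{-1} = sM$, for every $s \in S$.

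For part (1), to see that $K$ is a subgroup, I would check directly that it is closed under multiplication and inverses: if $g_1, g_2 \in K$ and $s \in S$, then $g_1 g_2 (sM) (g_1 g_2)^{-1} = g_1 (sM) g_1^{-1} = sM$, and applying $g^{-1}$ to both sides of $g(sM)g^{-1} = sM$ gives $g^{-1}(sM)g = sM$. To see $K \leq N_G(M)$, I would fix any $s \in S$ and note that $g \in K$ forces $gsMg^{-1} = sM$; multiplying by $s^{-1}$ on the left after conjugating, the cleanest route is to observe that $sM$ is a left coset, and $g$ permutes $M$-cosets by left translation composed with conjugation. The key point is that $1 \in sM$ need not hold, so I should argue more carefully: $gsMg^{-1} = sM$ rewrites as $(gsg^{-1})(gMg^{-1}) = sM$. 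Since the left-hand side is a coset of $gMg^{-1}$ and equals the coset $sM$ of $M$, and two cosets (of possibly different subgroups) coincide only if the subgroups coincide, we get $gMg^{-1} = M$, i.e. $g \in N_G(M)$. This identity of subgroups is the crux of part (1) and feeds directly into the later parts.

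For part (2), the inclusion $C_{G/M}(\langle S \rangle) \subseteq C_{G/M}(S) = K$ is immediate since $S \subseteq \langle S \rangle$. For the reverse, I would use part (1): every $g \in K$ normalizes $M$, so conjugation by $g$ induces a well-defined automorphism $\bar g$ of $G/M$ (as a set of cosets, or of the quotient when $M \normal G$, but here only the coset-permutation structure is needed). The hypothesis $S \subseteq N_G(M)$ ensures each $sM$ with $s \in S$ is itself permuted sensibly; the stabilizer condition $g(sM)g^{-1} = sM$ for all $s \in S$ then propagates to all finite products of elements of $S$ and their inverses, because $g(s_1 s_2 M)g^{-1} = (gs_1g^{-1})(gs_2 M g^{-1})$ and one expands using $gMg^{-1}=M$ together with the fixing condition on each factor. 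Thus $g$ fixes $wM$ for every $w \in \langle S \rangle$, giving $g \in C_{G/M}(\langle S \rangle)$. The main obstacle here is bookkeeping: one must verify that fixing $sM$ for generators $s$ genuinely implies fixing $wM$ for arbitrary words $w$, which requires the normalization from part (1) at each step.

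For part (3), I would identify $C_G(S) \cap C_G(M)$ as the kernel of a natural homomorphism out of $K$. Since each $g \in K$ normalizes $M$ (part (1)) and permutes the finitely many cosets $\{sM : s \in S\}$ fixing each setwise, conjugation by $g$ restricts to a permutation of the finite set $sM$ for each $s \in S$, and to an automorphism of the finite group $M$. I would define $\Phi : K \to \prod_{s \in S} \Sym(sM) \times \Aut(M)$ sending $g$ to the tuple of these induced permutations and the induced automorphism of $M$; the target is finite because $M$ and each $sM$ are finite and $S$ is finite. The kernel of $\Phi$ consists of exactly those $g \in K$ that centralize $M$ pointwise and fix every element of each $sM$ pointwise under conjugation — and fixing each element of $sM$ for all $s$ is equivalent to centralizing $S$ (since $s \in sM$), giving $\ker \Phi = C_G(S) \cap C_G(M)$. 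As the kernel of a homomorphism into a finite group, it has finite index in $K$, which is precisely the claim. The main obstacle is phrasing $\Phi$ so that its kernel is unambiguously $C_G(S) \cap C_G(M)$ rather than something slightly larger or smaller; I expect this to be the delicate point, and I would double-check that fixing $sM$ setwise plus centralizing $M$ pointwise plus centralizing each element of $sM$ pointwise collapses correctly to centralizing $S$ and $M$.
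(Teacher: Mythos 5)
Your proof is correct in all three parts, and while the overall strategy matches the paper's, two of your steps take a genuinely different (and arguably cleaner) route. For (1), you deduce $gMg^{-1}=M$ from the general fact that a left coset of one subgroup can equal a left coset of another subgroup only if the two subgroups coincide (valid: $aH=bK$ gives $H=(a^{-1}b)K$, a coset of $K$ containing the identity, hence $H=K$), whereas the paper manipulates the two identities $M=s^{-1}g^{-1}sMg$ and $s^{-1}gsg^{-1}\in M$ directly. For (2) the paper simply says ``this is clear''; your word-by-word propagation, using $S\subseteq N_G(M)$ both to move elements of $M$ past generators and to handle inverses of generators, is exactly the computation being suppressed. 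For (3), both proofs rest on the same principle---realize $C_G(S)\cap C_G(M)$ as the kernel of a homomorphism from $C_{G/M}(S)$ into a finite group---but the homomorphisms differ: you use the conjugation action on the finite cosets $sM$ together with the action on $M$, landing in $\prod_{s\in S}\Sym(sM)\times \Aut(M)$, while the paper works one element $s$ at a time in two steps, first passing to the finite-index subgroup $C_G(M)\cap C_{G/M}(s)$ (kernel of the conjugation action on $M$) and then taking the kernel of the commutator homomorphism $g\mapsto [s,g]$ into $M$, finally intersecting over the finitely many $s\in S$. Your one-shot version avoids the chain of finite-index reductions; note only that the $\Aut(M)$ factor in your target is redundant, since any $g$ whose conjugation fixes every element of $sM$ automatically centralizes $M$ (from $gsmg^{-1}=sm$ and $gsg^{-1}=s$ one gets $gmg^{-1}=m$), though including it is harmless.
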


\begin{proof}
(1): $C_{G/M}(S)$ is clearly a group. To see that $C_{G/M}(S)$ is contained in $N_G(M)$, observe that for $g\in C_{G/M}(S)$ and $s\in S$ we have (a) $M=s^{-1}g^{-1}sMg$, and (b) $s^{-1}gsg^{-1} \in M$. By (b) we have $g^{-1}M = s^{-1}g^{-1}sM$, and applying this to (a) we see that $M=s^{-1}g^{-1}sMg = g^{-1}Mg$, and hence $g\in N_G(M)$.

(2): This is clear.

(3): Since $M$ is finite, for each $s\in S$ the group $C_G(M)\cap C_{G/M}(s)$ has finite index in $C_{G/M}(s)$, and hence the group $C_G(sM)$ has finite index in $C_{G/M}(s)$, being the kernel of the homomorphism $(C_G(M)\cap C_{G/M}(s))\rightarrow M$, $g\mapsto [s,g]$. Therefore, since $S$ is finite, $C_G(S)\cap C_G(M)=\bigcap _{s\in S}C_G(sM)$ has finite index in $C_{G/M}(S)=\bigcap _{s\in S}C_{G/M}(s)$.
\end{proof}
%
%

Let $m$ be a mean on a group $G$. We define
\[
\mathrm{ker}(m)\defeq  \{ g\in G\, : \, m (C_G(g)) = 1 \} .
\]
It is easy to see that $\mathrm{ker}(m)$ is a subgroup of $G$. If $m$ is invaraint under conjugation by a subgroup $K$ of $G$, then $\mathrm{ker}(m)$ is normalized by $K$. Observe that if $H_0$ is any finitely generated subgroup of $\mathrm{ker}(m)$, then $m(C_G(H_0)) = 1$. Thus, if the mean $m$ is atomless then $C_G(H_0)$ is infinite for every finitely generated subgroup $H_0$ of $\mathrm{ker}(m)$.

Given two elements $h$ and $k$ in a group $G$, we denote their commutator by $[h,k]\defeq hkh^{-1}k^{-1}$. If $H$ and $K$ are subgroups of $G$ then we define $[H,K]$ to be the subgroup
\[
[H,K]\defeq  \langle \{ [h,k] \, : \, h\in H , \ k\in K \} \rangle .
\]
Clearly $[H,K]=[K,H]$. Note that the group $[H,K]$ is normalized by both $H$ and $K$. To see this, observe that if $[h,k]$ is any generator for $[H,K]$, where $h\in H$ and $k\in K$, then for any $h_0 \in H$ we have $h_0[h,k]h_0^{-1}=[h_0h,k][h_0,k]^{-1} \in [H,K]$. Hence $[H,K]$ is normalized by $H$, and by symmetry it is also normalized by $K$.

The following lemma, along with the more general Lemma \ref{lem:normal2}, is one of our main tools for understanding the ``location'' of conjugation invariant means on a group.

\begin{lem}\label{lem:normal}
Let $N$, $G_0$ and $K$ be subgroups of a group $G$, with $N$ normalized by $K$. Assume that there is no nontrivial finite subgroup of $G_0\cap [G_0,K\cap N]$ which is normalized by $K$. Then at least one of the following holds:
\begin{enumerate}
\item[$(1)$] There is an atomless $K$-conjugation invariant mean on $G$ which concentrates on $G_0\cap [G_0, K\cap N]$,
\item[$(2)$] For every $K$-conjugation invariant mean $m$ on $G$ which concentrates on $G_0$, we have $K\cap N\leq \mathrm{ker}(m)$.
\end{enumerate}
\end{lem}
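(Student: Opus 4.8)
The plan is to prove the contrapositive of the dichotomy: assuming that $(2)$ fails, I will produce the mean required by $(1)$. Write $L\defeq K\cap N$ (a $K$-invariant subgroup, since $K$ normalizes both $K$ and $N$) and $H\defeq G_0\cap[G_0,L]$. Failure of $(2)$ gives a $K$-conjugation invariant mean $m$ with $m(G_0)=1$ and some $\ell\in L$ with $\ell\notin\mathrm{ker}(m)$, i.e. $m(C_G(\ell))<1$, so that $D_\ell\defeq\{g\in G:\ell g\ell^{-1}\neq g\}$ has $m(D_\ell)>0$. By a standard argument (Namioka--Mazur), $m$ is a weak-$*$ limit of a net $p_i\in\ell^1(G)$ of probability vectors that are almost invariant for the conjugation action $K\acts G$, i.e. $\|k\cdot p_i-p_i\|_1\to0$ for every $k\in K$; testing against indicators gives $p_i(G_0)\to1$ and $p_i(D_\ell)\to m(D_\ell)>0$.

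The heart of the construction is a $K$-equivariant passage from $G$ to $H$. Let $Y$ be the space of $L$-conjugation orbits of $G$ and $\pi\colon G\to Y$ the quotient map; because $K$ normalizes $L$, the conjugation action of $K$ descends to $Y$ and $\pi$ is $K$-equivariant. Applying Lemma \ref{lem:tildep} to $\pi$ produces probability vectors $\tilde p_i$ on $\tilde X=G\otimes_\pi G$ with $\|k\cdot\tilde p_i-\tilde p_i\|_1\leq 5\|k\cdot p_i-p_i\|_1\to0$. I then push forward under the difference map $\delta\colon\tilde X\to G$, $\delta(x_0,x_1)=x_0x_1^{-1}$, which is $K$-equivariant for the conjugation actions and, since a same-fiber pair satisfies $x_1=\lambda x_0\lambda^{-1}$ for some $\lambda\in L$, lands in $[G,L]$ via $\delta(x_0,x_1)=[x_0,\lambda]$. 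As $p_i(G_0)\to1$ forces $\tilde p_i(G_0\times G_0)\to1$, the $\tilde p_i$-typical pairs have $x_0,x_1\in G_0$, whence $\delta(x_0,x_1)\in G_0\cap[G_0,L]=H$. A weak-$*$ cluster point $\overline m$ of $\delta_*\tilde p_i$ is thus a $K$-conjugation invariant mean concentrating on $H$.

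The main obstacle is to guarantee that $\overline m$ is nontrivial, i.e. $\overline m(\{1\})<1$; this is precisely what forces the $L$-orbit fibration, rather than pairing each $g$ with the single conjugate $\ell g\ell^{-1}$ (which would yield a mean invariant only under $C_K(\ell)$). Since $\delta(x_0,x_1)=1$ iff $x_0=x_1$, one has $(\delta_*\tilde p_i)(\{1\})=\sum_{y}q_i(y)\|p_i^{y}\|_2^2$ with $q_i(y)=p_i(\pi^{-1}(y))$. On the positive-mass family of fibers meeting $D_\ell$, the two distinct points $g$ and $\ell g\ell^{-1}$ carry nearly equal $p_i$-mass (because $\|\ell\cdot p_i-p_i\|_1\to0$), so the relevant $\|p_i^{y}\|_2^2$ is bounded away from $1$; this keeps $(\delta_*\tilde p_i)(\{1\})$ below $1$ by an amount proportional to $m(D_\ell)$, giving $\overline m(\{1\})<1$. (Conversely, if $L\leq\mathrm{ker}(m)$ then all fibers are $m$-essentially trivial and $\overline m=\delta_1$, consistent with $(2)$.) I expect the quantitative bookkeeping here, together with checking that the weak-$*$ limit retains $K$-invariance, concentration on $H$, and nontrivial mass off the identity simultaneously, to be the most delicate part.

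Finally I pass from $\overline m$ to an atomless mean, and this is where the hypothesis enters. Decompose $\overline m$ into its atomless and purely atomic parts, both $K$-conjugation invariant; if the atomless part is nonzero, its normalization is the desired mean. Otherwise $\overline m$ is purely atomic with $\overline m(\{1\})<1$, so it has a nontrivial atom, and for a threshold $c$ just below the maximal mass of a nonidentity atom the set $S\defeq\{x\neq 1:\overline m(\{x\})\geq c\}$ is finite, nonempty, and $K$-invariant (as $1$ is $K$-fixed). Then $A\defeq\langle S\rangle$ is a nontrivial $K$-normalized subgroup of $H$, and since $K$ permutes the finite generating set $S$, the image of $K$ in $\mathrm{Aut}(A)$ is finite. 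If $A$ is finite it is a nontrivial finite subgroup of $H$ normalized by $K$, contradicting the hypothesis; hence $A$ is infinite, and averaging an arbitrary atomless mean on the infinite set $A$ over the finite image of $K$ yields an atomless $K$-conjugation invariant mean concentrating on $A\subseteq H$. In either case $(1)$ holds.
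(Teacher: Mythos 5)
Your proposal is correct, and its engine is exactly the paper's: the fibration $\pi$ of $G$ over its $(K\cap N)$-conjugation orbits, the doubled space $G\otimes_\pi G$ controlled by Lemma \ref{lem:tildep}, and the $K$-equivariant difference map $(x_0,x_1)\mapsto x_0x_1^{-1}$ all appear verbatim in the paper's argument; the only structural difference is that the paper assumes $(1)$ fails and forces the limit mean onto the diagonal to deduce $(2)$, while you assume $(2)$ fails and bound the diagonal mass away from $1$ to manufacture the mean in $(1)$. Two comments on the places where you diverge. First, your key estimate is stated loosely: $\ell^1$-almost-invariance does not make $p_i(g)$ and $p_i(\ell g\ell^{-1})$ comparable point by point, so ``nearly equal mass, hence $\|p_i^y\|_2^2$ bounded away from $1$'' is not literally the mechanism. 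The correct bookkeeping (the contrapositive of the paper's threshold argument with $r=\tfrac34$) is to split $D_\ell$ into the points whose conditional fiber mass exceeds $\tfrac34$ --- there is at most one per fiber, and each contributes a discrepancy of at least half its own $p_i$-mass, so their total mass is at most $2\|\ell\cdot p_i-p_i\|_1$, where $\ell\cdot p_i$ is the push-forward of $p_i$ under conjugation by $\ell$ --- and the remaining points of $D_\ell$, each of which contributes at least a quarter of its $p_i$-mass to $1-\tilde p_i(\triangle)$. This gives
\[
1-\tilde p_i(\triangle)\;\geq\;\tfrac14\bigl(p_i(D_\ell)-2\|\ell\cdot p_i-p_i\|_1\bigr)
\]
along the whole net, so every weak-$*$ cluster point of $\delta_*\tilde p_i$ automatically has all three properties ($K$-invariance, concentration on $H$, mass strictly less than $1$ at the identity); the tension you worried about does not arise. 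Second, your purely atomic endgame is genuinely different from the paper's: the paper disposes of atoms at the outset by showing no $g\neq 1$ has finite $K$-orbit inside $G_0\cap[G_0,K\cap N]$ (if $\langle g^K\rangle$ is finite it violates the hypothesis; if infinite, its finite $K$-orbits yield an atomless invariant mean, contradicting the failure of $(1)$), whereas you extract a finite nonempty $K$-invariant set $S$ of nonidentity atoms, form $A=\langle S\rangle$, and use that the image of $K$ in $\mathrm{Aut}(A)$ embeds in $\mathrm{Sym}(S)$, hence is finite, to average an arbitrary atomless mean on the infinite group $A$. Both finishes work; yours trades the paper's ``union of finite orbits'' construction for the finite-automorphism-image averaging trick, and is a perfectly good alternative.
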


\begin{rmk}
The assumption that there is no nontrivial finite subgroup of $G_0\cap [G_0,K\cap N]$ which is normalized by $K$ can be removed at the expense of making the conclusion of the lemma a bit messier to state. See Lemma \ref{lem:normal2} below.
\end{rmk}

\begin{proof}[Proof of Lemma \ref{lem:normal}]
Assume that (1) fails and we will show that (2) holds. Fix then a mean $m$ on $G$ with $m(G_0)=1$ and which is invariant under conjugation by $K$. We must show that $m(C_{G}(h))=1$ for each $h\in K\cap  N$.

We first claim that there is no $g\in G - \{ 1_G \}$ with $g^K$ finite and contained in $G_0\cap [G_0,K\cap N]$. For suppose otherwise. If the group $\langle g^K \rangle$ were finite, then it would be a nontrivial finite subgroup of $G_0\cap [G_0,K\cap N]$ which is normalized by $K$, a contradiction. So the group $\langle g^K\rangle$ would have to be infinite. But every element of this group has a finite orbit under conjugation by $K$, so since $\langle g^K\rangle$ is infinite we can find an atomless $K$-conjugation invariant mean which concentrates on $\langle g^K\rangle \leq G_0\cap [G_0,K\cap N]$, which contradicts the hypothesis that (1) fails.

Let $X=G$ and let $\alpha : K\curvearrowright X$ denote the conjugation action of $K$ on $X$, $\alpha (k)x \defeq  kxk^{-1}$. By the Hahn-Banach Theorem we may find a net $(p_i)_{i\in I}$ of probability vectors on $X$, which weak${}^*$-converges to $m$ in $\ell ^{\infty }(X)^*$, and satisfies $\| \alpha (k)(p_i) - p_i \| _1\rightarrow 0$ for all $k\in K$. Since $m$ concentrates on $G_0$, we may additionally assume that each $p_i$ concentrates on $G_0$. Let $Y$ denote the set of all orbits of $\alpha (K\cap N)$ on $X$, and let $\pi : X\rightarrow Y$ denote the projection map, $\pi (x) \defeq \alpha  (K\cap N)x$. Since $K$ normalizes $K\cap N$, $K$ naturally acts on $Y$ so that $\pi$ is a $K$-equivariant map. Let
\[
\tilde{X}\defeq X\otimes _{\pi}X = \{ (x_0,x_1)\in X\times X \, : \, \pi (x_0)=\pi (x_1) \} ,
\]
and let $\tilde{\alpha}:K\curvearrowright \tilde{X}$ denote the diagonal action of $K$, i.e., $\tilde{\alpha}(k)(x_0,x_1)\defeq (\alpha (k)(x_0), \alpha (k)(x_1))= (kx_0k^{-1},kx_1k^{-1})$. For each probability vector $p$ on $X$ and each $y\in Y$, let $p^y$ be the normalized restriction of $p$ to $\pi ^{-1}(y)$ (where we put $p^y=0$ if $p(\pi^{-1}(y))=0$), and define the probability vector $\tilde{p}$ on $\tilde{X}$ by $\tilde{p}\defeq \sum _{y\in Y}p(\pi^{-1}(y)) p^y\otimes p^y$. By Lemma \ref{lem:tildep}, for each $k\in K$ we have $\| \tilde{\alpha}(k)\tilde{p}_i - \tilde{p}_i \| _1 \rightarrow 0$. Thus, after moving to a subnet of $(p_i)_{i\in I}$ if necessary, we may assume without loss of generality that the net $(\tilde{p}_i)_{i\in I}$ weak${}^*$-converges in $\ell ^{\infty}(\tilde{X})^*$ to a mean $\tilde{m}$ on $\tilde{X}$ which is invariant under $\tilde{\alpha}(K)$. Then $\tilde{m}$ concentrates on $\tilde{X} \cap (G_0\times G_0)$ since each $\tilde{p}_i$ does.

For each $(x_0,x_1)\in \tilde{X}\cap (G_0\times G_0)$ we have $x_0,x_1 \in G_0$ and $x_1 = hx_0h^{-1}$ for some $h\in K\cap N$, hence $x_0x_1^{-1} = x_0hx_0^{-1}h^{-1} \in G_0\cap [G_0,K\cap N]$. We let $\varphi : \tilde{X}\rightarrow X$ be the map $\varphi (x_0,x_1)\defeq x_0x_1^{-1}$. The map $\varphi$ is $K$-equivariant, i.e., $\varphi\circ \tilde{\alpha}(k)=\alpha (k)\circ \varphi$, hence the pushforward $\varphi _*\tilde{m}$, of $\tilde{m}$ under $\varphi$, is a $K$-conjugation invariant mean on $X=G$ satisfying $(\varphi _*\tilde{m} ) (G_0\cap [G_0,K\cap N])=1$. By our assumption, the mean $\varphi _*\tilde{m}$ must by purely atomic and, being $K$-invariant, must therefore concentrates on the collection of finite orbits of $\alpha (K)$ which are contained in $G_0\cap [G_0,K\cap N]$. As we saw above, the only such orbit is the trivial orbit of the identity element. This means that $\varphi _*\tilde{m}$ is the point mass at the identity element of $G$, and hence $\tilde{m}(\triangle _X ) =1$, where $\triangle _X \defeq  \{ (x,x)\, : \, x\in X \}\subseteq \tilde{X}$.

For each $i\in I$ let $q_i = \pi _*p_i$. For $i\in I$ and $y\in Y$, let $s_i (y) \defeq  \sup _{x\in \pi ^{-1}(y)}p_i^y(x)=\max _{x\in \pi ^{-1}(y)} p_i^y (x)$. Since $(\tilde{p}_i )_{i\in I}$ weak${}^*$-converges to $\tilde{m}$, we have $1= \tilde{m} (\triangle _X) = \lim _i \tilde{p}_i (\triangle _X )$, and so
\begin{align}
\nonumber 1 = \lim _i \tilde{p}_i (\triangle _X) &= \lim _i \sum _{y\in Y}q_i(y) \sum _{x\in \pi ^{-1}(y)} p_i^y(x)^2 \\
\nonumber &\leq \lim _i \sum _{y \in Y} q_i(y)s_i(y)\sum _{x\in \pi ^{-1}(y)}p_{i}^y (x) \\
\label{eqn:siO} &= \lim _i \int _{Y} s_i(y) \, dq_i (y).
\end{align}
For each $i\in I$ and $y\in Y$ choose some $x_i^y \in \pi ^{-1}(y)$ with $p_{i}^y (x_{i}^y)=s_i(y)$. Let $r\defeq \tfrac{3}{4}$ (although any number strictly between $\tfrac{1}{2}$ and $1$ will do), and define $Y_i \defeq  \{ y\in Y\, : s_i (y) >r \}$ and $X_i \defeq  \{ x_{i}^y \, : \, y\in Y \}$. Since $0\leq s_i (y)\leq 1$ we have
\begin{align*}
\int s_i \, dq_i = \int _{Y_i} s_i \, dq_i + \int _{Y\setminus Y_i} s_i \, dq_i &\leq q_i (Y_i) + r(1- q_i(Y_i)) \\
&= r + (1-r)q_i (Y_i ),
\end{align*}
and hence, by \eqref{eqn:siO},
\begin{equation}\label{eqn:Yi}
\lim _i p_i (\pi ^{-1}(Y_i)) = \lim _i q_i(Y_i)\geq \lim _i [\int s_i \, dq_i - r]/(1-r)  = 1 .
\end{equation}
In addition,
\begin{equation}\label{eqn:Xi}
\lim _i p_i (X_i) = \lim _i \sum _{y\in Y} p_{i}(x_{i}^y) = \lim _i \int _{Y}s_i (y) \, dq_i (y)  = 1 .
\end{equation}
Fix now any $h\in K\cap N$. For each $x=x_i^y \in (X_i\cap \pi ^{-1}(Y_i))\setminus C_G(h)$, we have $\alpha (h)x \neq x$, hence $p_i^y(\alpha (h)x)<1-p_i^y(x)<1-r$, and
\begin{align*}
p_i(x)(2r-1)\leq q_i(y) (r - (1-r))&\leq q_i(y)(p_{i}^y(x) - p_{i}^y(\alpha (h)x)) \\
&= |p_i(x) - p_i(\alpha (h)x)|,
\end{align*}
hence $p_i ((X_i\cap \pi ^{-1}(Y_i) )\setminus C_G(h)) \leq \| p_i - \alpha (h)p_i \| _1/(2r-1)$. By \eqref{eqn:Yi} and \eqref{eqn:Xi}, we therefore conclude that
\begin{align*}
m(X\setminus C_G(h)) = \lim _i p_i (X \setminus C_G(h)) &= \lim _i p_i ((X_i\cap \pi ^{-1}(Y_i) )\setminus C_G(h)) \\
& \leq \lim _i \frac{\| p_i - \alpha (h)p_i \|}{2r-1}\\
&= 0 .
\end{align*}
Therefore $m(C_G(h)) =1$ for all $h\in K\cap N$, as was to be shown. This finishes the proof
\end{proof}

\begin{lem}[Location lemma]\label{lem:normal2}
Let $N$, $G_0$ and $K$ be subgroups of a group $G$, with $N$ normalized by $K$. Let $P$ be a subgroup of $G$ which is normalized by $G_0$ and contains $G_0\cap [G_0,K\cap N]$ (e.g., $P=G_0\cap [G_0,K\cap N]$), and let $M$ be defined by
\[
M\defeq  \{ g\in G \, : \, g^K\text{ is finite and contained in }P  \} ,
\]
so that $M$ is a subgroup of $P$ which is normalized by $K$. 
Then at least one of the following holds:
\begin{enumerate}
\item[$(1)$] There is an atomless $K$-conjugation invariant mean on $G$ which concentrates on $P$,
\item[$(2)$] The group $M$ is finite, and for every $K$-conjugation invariant mean $m$ on $G$ which concentrates on $MG_0$, we have $m(C_{G/M}(h))=1$ for every $h\in MK\cap N$.
\end{enumerate}
\end{lem}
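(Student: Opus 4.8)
The plan is to assume that (1) fails and deduce (2), adapting the proof of Lemma~\ref{lem:normal} so that the entire argument is carried out modulo $M$. First I would record that $M$ is a subgroup of $P$ normalized by $K$, using that $(kgk^{-1})^K=g^K$ for $k\in K$. I would then show that, under the failure of (1), the group $M$ is finite: since $M$ is a union of finite $\alpha(K)$-orbits all contained in $P$, an infinite $M$ would carry an atomless $K$-conjugation-invariant mean (take an atomless mean on the infinite orbit space $M/\alpha(K)$ and spread it uniformly over each finite orbit, as in Proposition~\ref{prop:finnorm}), and this mean concentrates on $P$, contradicting the failure of (1).

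Next I fix a $K$-conjugation-invariant mean $m$ with $m(MG_0)=1$ and aim to show $m(C_{G/M}(h))=1$ for each $h\in MK\cap N$. Mirroring the proof of Lemma~\ref{lem:normal}, I take a net $(p_i)$ of probability vectors on $X=G$ that weak${}^*$-converges to $m$, is almost invariant for the conjugation action $\alpha$ of $K$, and concentrates on $MG_0$. I fiber $X$ by the orbits of $\alpha(W)$, where $W\defeq\langle M,K\cap N\rangle=M(K\cap N)$ (a subgroup normalized by $K$ and contained in $N_G(M)$), form $\tilde{X}=X\otimes_\pi X$ with the diagonal action, pass to a limit $\tilde{m}$ of the $\tilde{p}_i$ via Lemma~\ref{lem:tildep}, and push forward under $\varphi(x_0,x_1)=x_0x_1^{-1}$.

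The crucial new step is that $\varphi_*\tilde{m}$ concentrates on $M$. For $\tilde{m}$-typical $(x_0,x_1)$ one has $x_1=wx_0w^{-1}$ with $w\in W$ and $x_0,x_1\in MG_0$, so $\varphi(x_0,x_1)=[x_0,w]$; a commutator expansion (using that $G_0$ normalizes $P$, that $M\subseteq P$, that $[G_0,K\cap N]\subseteq P$, and that $W\subseteq N_G(M)$) places $[x_0,w]$ in $P$. Hence $\varphi_*\tilde{m}$ is a $K$-conjugation-invariant mean concentrating on $P$. Now every atom of a $K$-conjugation-invariant mean has finite $K$-orbit (else its orbit would carry infinite mass), and an atom lying in $P$ together with its orbit lies in $M$; thus all atoms of $\varphi_*\tilde{m}$ lie in $M$. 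Consequently the normalized restriction of $\varphi_*\tilde{m}$ to the $K$-invariant set $G\setminus M$, if nonzero, would be an atomless $K$-conjugation-invariant mean concentrating on $P$, contradicting the failure of (1). Therefore $\varphi_*\tilde{m}(M)=1$, i.e. $\tilde{m}$ concentrates on the diagonal modulo $M$, namely $\{(x_0,x_1)\in\tilde{X}:Mx_0=Mx_1\}$.

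Finally I would run the quantitative argument of Lemma~\ref{lem:normal} with peak \emph{points} replaced by peak left-$M$-\emph{cosets}: setting $t_i(y)$ to be the maximal $p_i^y$-mass carried by a single $M$-coset inside the fiber over $y$, the concentration on the diagonal modulo $M$ gives $\int t_i\,dq_i\to 1$. Since each $\alpha(h)$ with $h\in W$ permutes the $M$-cosets within every fiber, almost invariance forces $m(\{x:[h,x]\in M\})=1$ for each fixed $h\in W$. Applying this to $h=\mu$ as $\mu$ ranges over the finitely many elements of $M\subseteq W$ and intersecting yields $m(N_G(M))=1$, because $\{x:x\mu x^{-1}\in M\text{ for all }\mu\in M\}=N_G(M)$; intersecting with $m(\{x:[h,x]\in M\})=1$ then gives $m(C_{G/M}(h))=1$ for every $h\in W$. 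The main obstacle is to extend this from $h\in W=M(K\cap N)$ to all $h\in MK\cap N$ while keeping $\varphi_*\tilde{m}$ supported in $P$: the two requirements pull the fibration subgroup in opposite directions, since admitting an $h$ whose $K$-component lies outside $N$ pushes the commutators $[x_0,w]$ out of $P$. Reconciling these demands, together with the careful left-/right-coset bookkeeping needed to identify $\{x:[h,x]\in M\}\cap N_G(M)$ with the genuine $C_{G/M}(h)$, is the delicate heart of the proof.
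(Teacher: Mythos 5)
Your plan --- rerunning the proof of Lemma~\ref{lem:normal} ``modulo $M$'' --- has a genuine gap at its main load-bearing point, beyond the one you flag yourself. The unflagged gap is the claim that $\varphi_*\tilde{m}$ concentrates on $P$. You justify $[x_0,w]\in P$ by a commutator expansion ``using \dots\ that $[G_0,K\cap N]\subseteq P$'', but that is not a hypothesis of the lemma: $P$ is only assumed to contain the \emph{intersection} $G_0\cap[G_0,K\cap N]$. In Lemma~\ref{lem:normal} the corresponding step works precisely because both coordinates lie in $G_0$, so $x_0x_1^{-1}=[x_0,h]$ lies in $G_0$ \emph{and} in $[G_0,K\cap N]$, hence in the intersection. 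Here the coordinates lie only in $MG_0$: writing $x_0=\mu_0g_0$ and $x_1=wx_0w^{-1}$ with $\mu_0\in M$, $g_0\in G_0$, $w=\nu h$, $\nu\in M$, $h\in K\cap N$, one computes
\[
x_0x_1^{-1}=[x_0,w]=\mu_0\,\bigl(g_0\nu g_0^{-1}\bigr)\,[g_0,h]\,\bigl(h\mu_0^{-1}h^{-1}\bigr)\,\nu^{-1} .
\]
Every factor except $[g_0,h]$ lies in $P$ (using $M\leq P$, that $K$ normalizes $M$, and that $G_0$ normalizes $P$), so $x_0x_1^{-1}\in P$ if and only if $[g_0,h]\in P$; and $[g_0,h]$ is merely an element of $[G_0,K\cap N]$, with no reason to lie in $G_0$ and hence no reason to lie in $P$. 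So the concentration of $\varphi_*\tilde{m}$ on $P$ is unjustified, and with it the diagonal-mod-$M$ concentration on which everything downstream depends. A related problem: your route to $m(N_G(M))=1$ applies the displacement estimate to conjugation by $\mu\in M$, but your net $(p_i)$ is almost invariant only under conjugation by $K$ (the mean is only assumed $K$-conjugation invariant), and $M\not\leq K$ in general.

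The second gap is the one you name: fibering by $\alpha(W)$-orbits with $W=M(K\cap N)$ can only ever yield the conclusion for $h\in W$, while the statement requires it for every $h\in MK\cap N$, and these sets genuinely differ; declaring the passage between them ``the delicate heart of the proof'' leaves the proof incomplete. The paper resolves both difficulties at once with a device your proposal is missing: do not reprove Lemma~\ref{lem:normal}, but quotient and \emph{apply} it. First one proves that every $K$-conjugation invariant mean $m$ with $m(MG_0)=1$ satisfies $m(N_G(M))=1$, with no almost-invariance under $M$ needed: form $n\defeq\int_{g\in MG_0}n_{g^{-1}Mg}\,dm(g)$, where $n_{g^{-1}Mg}$ is uniform measure on $g^{-1}Mg$, a set lying in $P$ because $G_0$ normalizes $P$ and $M\leq P$; then $n$ is $K$-conjugation invariant and concentrates on $P$, hence is purely atomic since (1) fails, hence $n(M)=1$, forcing $g^{-1}Mg=M$ for $m$-a.e.\ $g$. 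One then passes to $G'\defeq N_G(M)/M$ and applies Lemma~\ref{lem:normal} verbatim to $K'=\mathrm{proj}(K)$, $G_0'=\mathrm{proj}(G_0\cap N_G(M))$, $N'=\mathrm{proj}(N\cap N_G(M))$: in $G'$ the analogue of $M$ is trivial (since $M$ is finite, finite $K'$-orbits lift to finite $K$-orbits inside $P$, i.e.\ into $M$), so no commutator ever needs to be placed in $P$; and $K'\cap N'$ is exactly $\mathrm{proj}(MK\cap N)$, so the conclusion, pulled back through the projection, covers every $h\in MK\cap N$. These are precisely the two points where your argument breaks down.
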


\begin{proof}[Proof of Lemma \ref{lem:normal2}]
Assume (1) fails and we will show that (2) holds. Suppose toward a contradiction that $M$ is infinite. Then we could finite a sequence $C_0,C_1,\dots$, of distinct finite $K$-orbits for the conjugation action of $K$ on $M$. If we let $m_{C_n}$ denote normalized counting measure on $C_n$, then any weak${}^*$-cluster point of $m_{C_0},m_{C_1}\dots$, is an atomless $K$-conjugation invariant mean on $G$ which concentrates on $P$, a contradiction. Therefore, $M$ must be finite. Let $N_G(M)$ denote the normalizer of $M$ in $G$. We next establish the following:
\begin{center}
$(*)$ If $m$ is any $K$-conjugation invariant mean on $G$ which concentrates on $MG_0$, then $m(N_{G}(M))=1$.
\end{center}
To see this, let $m$ be a $K$-conjugation invariant mean on $G$ with $m(MG_0)=1$. Since $P$ is normalized by $G_0$ and $M\leq P$, we obtain a mean $n$ on $G$, concentrating on $P$, defined by $n \defeq  \int _{g\in MG_0} n_{g^{-1}Mg}\, dm (g)$, where $n_{g^{-1}Mg}$ denotes normalized counting measure on the finite group $g^{-1}Mg\leq P$ for $g\in MG_0$. Since $m$ is invariant under conjugation by $K$ and $M$ is normalized by $K$, the mean $n$ is invariant under conjugation by $K$. Since (1) fails, the mean $n$ must be purely atomic and hence we must have $n(M)=1$, which implies that $m(N_G(M))=1$. This proves $(*)$.

Let $G'$ be the quotient group $G'\defeq  N_G(M)/M$, and let $\mathrm{proj}:N_G(M) \rightarrow G'$ be the projection map. Define $K'\defeq \mathrm{proj}(K)$, $G_0' \defeq  \mathrm{proj}(G_0 \cap N_{G}(M))$, $N'\defeq \mathrm{proj}(N\cap N_G(M))$, and $P' \defeq  \mathrm{proj}(P\cap N_G(M))\geq G_0' \cap [G_0',K'\cap N' ]$. Observe that the group $M' \defeq  \{ x \in G' \, : \, x^{K'} \text{ is finite and contained in }P' \}$ is trivial: every orbit of the conjugation action $K'\curvearrowright G'$, is the image under $\mathrm{proj}$ of an orbit of the conjugation action $K\curvearrowright N_G(M)$, and since $M$ is finite this means that finite orbits of $K'\curvearrowright G'$ are images of finite orbits of $K\curvearrowright N_G(M)$, hence $M'$ is trivial. In particular, there is no nonidentity $x \in G' - \{ 1 _{G'} \}$ such that $x^{K'}$ is finite and contained in $G_0' \cap [G_0',K'\cap N']$.

This establishes all of the hypotheses of Lemma \ref{lem:normal} for the groups $G',N',G_0'$, and $K'$ in place of $G,N,G_0$, and $K$ respectively. Observe that (1) of Lemma \ref{lem:normal} fails for these groups: if $m'$ were an atomless $K'$-conjugation invariant mean on $G'$ which concentrates on $G_0'\cap  [G_0',K'\cap N']$, then we would obtain an atomless $K$-conjugation invariant mean $m$ on $G$ concentrating on $P$, defined by $m (A) \defeq  \int _{gM\in P'} \tfrac{|A\cap gM|}{|M|} \, dm' (gM )$, a contradiction. Thus, (2) of Lemma \ref{lem:normal} must hold, i.e., for every $K'$-conjugation invariant mean $m'$ on $G'$ which concentrates on $G_0'$, we have $m' (C_{G'}(h')) = 1$ for every $h'\in K'\cap N'= \mathrm{proj}(MK\cap N)$. In particular, by $(*)$, this applies to all means $m'$ which are the projection, $m' \defeq  \mathrm{proj}_*m$, of some $K$-conjugation invariant mean $m$ on $G$ which concentrates on $MG_0$. Therefore, for all such means $m$ we have $m (C_{G/M}(h))=1$, as was to be shown.
\end{proof}

As a Corollary to Lemma \ref{lem:normal} we obtain:

\begin{cor}\label{cor:ncor1}
Let $N$ be a normal subgroup of a group $G$ and assume that there is no nontrivial finite normal subgroup of $G$ contained in $[G,N]$. Then at least one of the following holds:
\begin{enumerate}
\item[$(A.1)$] There is an atomless $G$-conjugation invariant mean which concentrates on $[N,N]$.
\item[$(A.2)$] There is an atomless $G$-conjugation invariant mean which concentrates on $[G,N]$, and for every $G$-conjugation invariant mean $m$ on $G$ which concentrates on $N$, we have that $N\leq \mathrm{ker}(m)$.
\item[$(A.3)$] For every $G$-conjugation invariant mean $m$ on $G$ we have $N\leq \mathrm{ker}(m)$.
\end{enumerate}
In particular, if $G$ is inner amenable then there exists an atomless $G$-conjugation invariant mean $m$ on $G$ with either $m([N,N])=1$ or $N\leq \mathrm{ker}(m)$.
\end{cor}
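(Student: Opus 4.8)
The plan is to deduce the trichotomy by applying Lemma~\ref{lem:normal} twice, each time with $K=G$ (so that $K\cap N=N$, using normality of $N$), but with two different choices of $G_0$. Before doing so I record two inclusions used throughout: since $N$ is normal, every commutator $[g,n]=gng^{-1}n^{-1}$ with $n\in N$ lies in $N$, so $[G,N]\leq N$; and trivially $[N,N]\leq[G,N]$. Consequently the hypothesis of the corollary (no nontrivial finite normal subgroup of $G$ inside $[G,N]$) also rules out such subgroups inside the smaller set $[N,N]$.

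First I would apply Lemma~\ref{lem:normal} with $G_0=N$. Here $G_0\cap[G_0,K\cap N]=N\cap[N,N]=[N,N]$, and a finite subgroup of $[N,N]$ normalized by $K=G$ is exactly a finite normal subgroup of $G$ contained in $[N,N]\leq[G,N]$, so the hypothesis of Lemma~\ref{lem:normal} holds. Its conclusion is a dichotomy: either there is an atomless $G$-conjugation invariant mean concentrating on $[N,N]$ (this is precisely $(A.1)$), or every $G$-conjugation invariant mean $m$ concentrating on $N$ satisfies $N\leq\mathrm{ker}(m)$; call this latter property $(\star)$. Next I would apply Lemma~\ref{lem:normal} with $G_0=G$, where $G_0\cap[G_0,K\cap N]=[G,N]$ and the hypothesis of the lemma is now literally the hypothesis of the corollary. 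Its conclusion is: either there is an atomless $G$-conjugation invariant mean concentrating on $[G,N]$ (call this $(1')$), or every $G$-conjugation invariant mean on $G$ satisfies $N\leq\mathrm{ker}(m)$, which is exactly $(A.3)$. Assembling: if $(A.1)$ holds we are done; otherwise $(\star)$ holds, and I consult the second application. If its second alternative holds we obtain $(A.3)$; if instead $(1')$ holds, then the atomless mean on $[G,N]$ together with $(\star)$ is precisely the statement $(A.2)$. This establishes $(A.1)\vee(A.2)\vee(A.3)$.

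For the final ``in particular'' statement, assume $G$ is inner amenable, so that an atomless $G$-conjugation invariant mean exists. In case $(A.1)$ the witnessing mean $m$ satisfies $m([N,N])=1$. In case $(A.2)$, the atomless mean $m_0$ furnished by the first clause concentrates on $[G,N]$, hence on $N$ (using $[G,N]\leq N$), so the second clause of $(A.2)$ yields $N\leq\mathrm{ker}(m_0)$. In case $(A.3)$, inner amenability supplies an atomless $G$-conjugation invariant mean $m$, and $(A.3)$ gives $N\leq\mathrm{ker}(m)$. In every case we obtain an atomless $G$-conjugation invariant mean $m$ with $m([N,N])=1$ or $N\leq\mathrm{ker}(m)$, as required; note that inner amenability is used only to produce a witness in case $(A.3)$.

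The argument is essentially bookkeeping once the two applications are identified, so I do not expect a genuine obstacle; the only points that require care are verifying the ``no nontrivial finite $G$-normalized subgroup'' hypothesis of Lemma~\ref{lem:normal} in each case (which rests entirely on the inclusion $[N,N]\leq[G,N]$), and observing that the mean produced in branch $(1')$ automatically concentrates on $N$ via $[G,N]\leq N$, so that $(\star)$ can upgrade it to the stronger conclusion needed for $(A.2)$ and for the final statement.
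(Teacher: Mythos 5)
Your proposal is correct and follows essentially the same route as the paper: two applications of Lemma~\ref{lem:normal} with $K=G$, first with $G_0=N$ (giving the $(A.1)$/$(\star)$ dichotomy) and then with $G_0=G$ (giving the $[G,N]$/$(A.3)$ dichotomy), assembled into the trichotomy. The only differences are cosmetic --- the paper phrases the assembly contrapositively (assuming $(A.1)$ and $(A.2)$ fail to derive $(A.3)$), and your write-up additionally spells out the ``in particular'' clause and the inclusion $[G,N]\leq N$ used there.
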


\begin{proof}
Assume that $(A.1)$ and $(A.2)$ fail and we will prove that $(A.3)$ holds. Apply Lemma \ref{lem:normal} with $G_0=N$ and $K=G$. The assumption that $(A.3)$ fails implies that alternative (1) of that lemma fails, and hence alternative (2) holds, i.e., for every $G$-conjugation invariant mean $m$ on $G$ which concentrates on $N$ we have $N\leq \mathrm{ker}(m)$. But this means that the second part of $(A.2)$ holds, so the assumption that $(A.2)$ fails then implies that there is no atomless $G$-conjugation invariant mean which concentrates on $[G,N]$. Applying Lemma \ref{lem:normal} again, but this time using $G_0=K=G$, this means that (1) of that lemma fails, and hence (2) must hold, i.e., for every $G$-conjugation invariant mean $m$ on $G$ we have $N\leq \mathrm{ker}(m)$, which is precisely $(A.3)$.
\end{proof}

\begin{cor}\label{cor:ncor2}
Let $G$  be an inner amenable group and let $N$ be a normal finitely generated subgroup of $G$. Then there is an atomless $G$-conjugation invariant mean $m$ on $G$ with either $m([N,N])=1$ or $m(C_G(N))=1$.
\end{cor}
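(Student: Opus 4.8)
The plan is to follow the two-step template of Corollary~\ref{cor:ncor1}, but to replace each appeal to Lemma~\ref{lem:normal} by an appeal to the location lemma (Lemma~\ref{lem:normal2}), which carries no finiteness hypothesis. The point is that if $G$ had no nontrivial finite normal subgroup contained in $[G,N]$, the result would be immediate: Corollary~\ref{cor:ncor1} would hand us an atomless conjugation invariant mean $m$ with $m([N,N])=1$ or $N\le \mathrm{ker}(m)$, and in the latter case finite generation of $N$ forces $m(C_G(N))=1$, since $C_G(N)=\bigcap_{s\in S}C_G(s)$ for a finite generating set $S\subseteq N$ and each $m(C_G(s))=1$. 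The entire difficulty is therefore the removal of that finiteness hypothesis, i.e. coping with possibly nontrivial finite normal subgroups of $G$ sitting inside $[G,N]$; this is exactly what Lemma~\ref{lem:normal2} is designed for, at the cost of measuring centralizers in $G/M$ rather than in $G$.

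First I would dispose of the trivial branch: if some atomless conjugation invariant mean concentrates on $[N,N]$ we are done, so assume none does. I then apply Lemma~\ref{lem:normal2} with $K=G$, $G_0=N$, and $P=[N,N]$; since $M_1\defeq\{g: g^G\text{ is finite and contained in }[N,N]\}$ lies in $[N,N]\subseteq N$, so that $M_1N=N$, the failure of its alternative $(1)$ yields the statement $(\dagger)$: the group $M_1$ is finite and every conjugation invariant mean concentrating on $N$ gives measure one to $C_{G/M_1}(h)$ for all $h\in N$. Next I apply Lemma~\ref{lem:normal2} with $K=G_0=G$ and $P=[G,N]$. If its alternative $(2)$ holds then every conjugation invariant mean $m$ satisfies $m(C_{G/M_2}(h))=1$ for all $h\in N$, with $M_2$ finite; if instead its alternative $(1)$ holds, there is an atomless conjugation invariant mean $m^{*}$ concentrating on $[G,N]$, and here I use the elementary observation that $[G,N]\subseteq N$ (because $N$ is normal) to regard $m^{*}$ as concentrating on $N$, whence $(\dagger)$ applies to $m^{*}$. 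Either way I arrive at an atomless conjugation invariant mean $m$ and a finite normal subgroup $M\in\{M_1,M_2\}$ of $G$ with $m(C_{G/M}(h))=1$ for every $h\in N$.

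It remains to convert this modular centralizer statement into $m(C_G(N))=1$, and this is where finite generation of $N$ is essential. Writing $S$ for a finite generating set of $N$, finite additivity gives $m(C_{G/M}(S))=1$, and $C_{G/M}(S)=C_{G/M}(N)$ by Proposition~\ref{prop:CGM}(2). Since $M$ is finite and $S\subseteq N\subseteq N_G(M)$, Proposition~\ref{prop:CGM}(3) shows that $L\defeq C_G(N)\cap C_G(M)$ is a finite index subgroup of $C_{G/M}(N)$; moreover $L$ is normal in $G$ and contained in $C_{G/M}(N)$. Boosting $m$ through Proposition~\ref{prop:finind}(ii), applied to the finite index normal subgroup $L$ of the conull subgroup $C_{G/M}(N)$, produces an atomless conjugation invariant mean $n$ with $n(L)=1$, and hence $n(C_G(N))=1$, as desired. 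The main obstacle is conceptual rather than computational: one must recognize that the clean $\mathrm{ker}(m)$ conclusion of the finite-normal-subgroup-free case must be weakened to a $C_{G/M}$ conclusion in general, and then recover $C_G(N)$ from it using both the finiteness of $M$ (through Proposition~\ref{prop:CGM}(3)) and the finite generation of $N$.
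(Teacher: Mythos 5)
Your proof is correct and follows essentially the same route as the paper's: both hinge on two applications of the location lemma (Lemma~\ref{lem:normal2}) --- first with $K=G$, $G_0=N$, $P=[N,N]$, then a second time with $K=G_0=G$ --- followed by the same finishing move converting $m(C_{G/M}(N))=1$ into a mean concentrating on $C_G(N)$ via Proposition~\ref{prop:CGM} and Proposition~\ref{prop:finind}. The only immaterial difference is that in the second application you take $P=[G,N]$ and split on the lemma's two alternatives (using $[G,N]\subseteq N$ to feed alternative $(1)$ back into your $(\dagger)$), whereas the paper takes $P=N$ and splits on whether some atomless conjugation invariant mean concentrates on $N$; both case analyses land in the same place.
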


\begin{proof}
Note that if $[G,N]$ contains no nontrivial finite normal subgroup of $G$ then this follows immediately from Corollary \ref{cor:ncor1}. In the general case, we will argue similarly, but apply Lemma \ref{lem:normal2} instead of Lemma \ref{lem:normal}. Suppose that there is no atomless $G$-conjugation invariant mean which concentrates on $[N,N]$. Then (1) of Lemma \ref{lem:normal2} fails (applied to $K=G$, $G_0=N$, and $P=[N,N]$), so the group $M$, of all elements of $[N,N]$ with finite $G$-conjugacy classes, is finite, and every $G$-conjugation invariant mean $m$ which concentrates on $N$ must satisfy $m(C_{G/M}(N))=1$ (since $N$ is finitely generated). We now consider cases.

Case 1: There exists an atomless $G$-conjugation invariant mean $m$ which concentrates on $N$. In this case, by what we already showed we have $m(C_{G/M}(N))=1$. Since $M$ is finite and $N$ is finitely generated, the group $C_G(N)$ has finite index in $C_{G/M}(N)$. Therefore, by Proposition \ref{prop:finind}, we can find an atomless $G$-conjugation invariant mean on $G$ which concentrates on $C_G(N)$, as was to be shown.

Case 2: There does not exist an atomless $G$-conjugation invariant mean $m$ which concentrates on $N$. In this case, applying Lemma \ref{lem:normal2} again, but this time with $K=G_0=G$ and $P=N$, we see that (1) of that lemma fails, and hence (2) holds, i.e., the group $M_1$, of all elements of $N$ with finite $G$-conjugacy class, is finite, and every $G$-conjugation invariant mean $m$ on $G$ satisfies $m(C_{G/M_1}(N))=1$ (once again, since $N$ is finitely generated). Since by assumption $G$ is inner amenable, we can find an atomless $G$-conjugation invariant mean $m_1$ on $G$, which must necessarily satisfy $m_1(C_{G/M_1}(N))=1$. We now argue as in Case 1. Since $M_1$ is finite and $N$ is finitely generated, the group $C_G(N)$ has finite index in $C_{G/M_1}(N)$, so we can apply Proposition \ref{prop:finind} to find an atomless $G$-conjugation invariant mean satisfying $m(C_G(N))=1$.
\end{proof}

Taking $N=K=G_0=G$ and $P=[G,G]$ in Lemma \ref{lem:normal2} shows that, if $G$ is an inner amenable group, then either the commutator subgroup of $G$ is inner amenable, or else there exists a finite normal subgroup $M$ of $G$ such that $G/M$ admits an asymptotically central sequence (i.e., the centralizer of every finite subset of $G/M$ is infinite). If $G$ is additionally finitely generated and has no nontrivial finite normal subgroups, then we obtain:

\begin{cor}\label{cor:iacommutator}
Let $G$ be a group. Assume that $G$ is finitely generated and contains no nontrivial finite normal subgroups. If $G$ is inner amenable then exactly one of the following holds:
\begin{enumerate}
\item The commutator subgroup $[G,G]$ of $G$ is inner amenable,
\item The center, $Z(G)$, of $G$, is infinite, $[G,G]\cap Z(G)=1$, and every $[G,G]$-conjugation invariant mean on $G$ concentrates on $Z(G)$. Moreover, there is a finitely generated subgroup $H$ of $[G,G]$ such that $C_G(H)=Z(G)$.
\end{enumerate}
\end{cor}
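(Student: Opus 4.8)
The plan is to prove that the two alternatives are mutually exclusive and that at least one of them holds; together these give the desired dichotomy.

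\textbf{Exclusivity.} First I would check (1) and (2) cannot both hold. If $[G,G]$ is inner amenable, choose an atomless $[G,G]$-conjugation invariant mean $\nu$ on $[G,G]$ and view it as a mean on $G$ concentrating on $[G,G]$ (via $\nu(A)\defeq\nu(A\cap[G,G])$); it is still atomless and $[G,G]$-conjugation invariant. If (2) also held, then $\nu$ would concentrate on $Z(G)$, hence on $[G,G]\cap Z(G)=\{1\}$, contradicting atomlessness. So at most one alternative holds, and it remains to show that the failure of (1) forces (2).

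\textbf{From the location lemma to the center.} Assume $[G,G]$ is not inner amenable and fix a finite generating set $a_1,\dots,a_n$ of $G$. Applying Lemma~\ref{lem:normal2} with $N=K=G_0=G$ and $P=[G,G]$, the group $M$ is exactly the set of elements of $[G,G]$ with finite $G$-conjugacy class. Its alternative (1) would produce an atomless conjugation invariant mean concentrating on $[G,G]$, making $[G,G]$ inner amenable — contrary to hypothesis; so alternative (2) holds. Thus $M$ is finite, hence trivial (it is normal in $G$, which has no nontrivial finite normal subgroup), and $\mathrm{ker}(m)=G$ for every conjugation invariant mean $m$ on $G$. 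Since central elements have singleton conjugacy classes, $[G,G]\cap Z(G)\subseteq M=\{1\}$, so $[G,G]\cap Z(G)=1$; as $Z(G)$ is moreover torsion-free (finite central subgroups are trivial), $Z(G)$ embeds in $G/[G,G]$ and is finitely generated. Taking an atomless $m$ (which exists since $G$ is inner amenable) and using $G=\mathrm{ker}(m)$, the remark preceding Lemma~\ref{lem:normal} shows $C_G(G)=Z(G)$ is infinite.

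\textbf{A second application and $C_G([G,G])=Z(G)$.} Next I would run Lemma~\ref{lem:normal2} again with $K=N=[G,G]$, $G_0=G$, $P=[G,G]$. Here $M$ is the $\FC$-center of $[G,G]$; its alternative (1) again contradicts the failure of (1), so we get that $\FC([G,G])$ is finite, hence trivial (it is characteristic in $[G,G]\normal G$, so normal in $G$), whence $[G,G]$ is ICC, and moreover $[G,G]\le\mathrm{ker}(\mu)$ for every $[G,G]$-conjugation invariant mean $\mu$ on $G$. In particular $Z([G,G])=1$. Setting $W\defeq C_G([G,G])\normal G$, we then have $[W,G]\le W\cap[G,G]=Z([G,G])=1$, so $W\le Z(G)$; as $Z(G)\le W$ trivially, $C_G([G,G])=Z(G)$, and consequently the conjugation action of $G/Z(G)$ on $[G,G]$ is faithful.

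\textbf{The finitely generated $H$, and the main obstacle.} It remains to find a finitely generated $H\le[G,G]$ with $C_G(H)=Z(G)$; granting this, statement (2) follows immediately, for if $\mu$ is any $[G,G]$-conjugation invariant mean then $\mu(C_G(h))=1$ for each of the finitely many generators $h$ of $H$ (as $H\le[G,G]\le\mathrm{ker}(\mu)$), so by finite additivity $\mu(C_G(H))=\mu(Z(G))=1$. Since $G=\langle a_1,\dots,a_n\rangle$, the subgroup $[G,G]$ is the normal closure of $\{[a_i,a_j]\}$ and $Z(G)=C_G([G,G])$; the task is to replace $[G,G]$ by a finitely generated subgroup with the same $G$-centralizer, equivalently to find a finite $F\subseteq[G,G]$ whose pointwise stabilizer under the faithful conjugation action of $G/Z(G)$ is trivial. \emph{Establishing this is the main obstacle.} For a general finitely generated group no such finite set exists: for instance, for $G=S_{\mathbb Z}^{\mathrm{fin}}\rtimes\mathbb Z$ (the finitary symmetric group extended by the shift) every finite subset of $[G,G]$ has infinite pointwise stabilizer — and in precisely such examples $[G,G]$ is inner amenable, i.e.\ one is in alternative (1). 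Thus the resolution must genuinely use that here $[G,G]$ is \emph{not} inner amenable: I expect to invoke the spectral-gap reformulation of non-inner-amenability (no $\ell^1$-probability vector on $[G,G]\setminus\{1\}$ is almost invariant under conjugation by a suitable finite $F_0\subseteq[G,G]$) to rule out a non-stabilizing descending chain of centralizers $C_G(F)$, $F\subseteq[G,G]$ finite, and thereby force $C_G(F)$ to reach $Z(G)$ at a finite stage. Combining this with the exclusivity above yields that exactly one of (1), (2) holds.
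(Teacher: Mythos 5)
Most of your proposal is correct, and in places cleaner than the paper: your exclusivity argument avoids Markov--Kakutani, your first application of Lemma~\ref{lem:normal2} correctly yields $\mathrm{ker}(m)=G$ for every conjugation invariant mean, $Z(G)$ infinite, and $[G,G]\cap Z(G)\subseteq M=1$, and your second application (with $K=N=[G,G]$, $G_0=G$) correctly yields that the FC-center of $[G,G]$ is trivial, that $[G,G]\leq \mathrm{ker}(\mu)$ for every $[G,G]$-conjugation invariant mean $\mu$ on $G$, and that $C_G([G,G])=Z(G)$. However, there is a genuine gap, and it sits exactly where you flagged it: you have no proof of the existence of a finitely generated $H\leq [G,G]$ with $C_G(H)=Z(G)$, and in your architecture \emph{both} remaining claims of alternative (2) (this $H$, and the concentration of every $[G,G]$-conjugation invariant mean on $Z(G)$) hinge on that single unproven step. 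Moreover, the mechanism you propose for it is doubtful. If for each finitely generated $H\leq[G,G]$ one picks $y_H\in C_G(H)\setminus Z(G)$, then a weak${}^*$ cluster point $n$ of the point masses $(\delta_{y_H})$ along the directed set of such $H$ is a $[G,G]$-conjugation invariant mean on $G$ with $n(Z(G))=0$; but this $n$ need be neither atomless nor concentrated on $[G,G]$, so non-inner-amenability of $[G,G]$ does not directly collide with it. For instance, if $n$ is purely atomic it concentrates on elements of $G\setminus Z(G)$ with finite $[G,G]$-conjugacy class, and nothing you have established (triviality of the FC-center \emph{of} $[G,G]$, or $[G,G]\leq\mathrm{ker}(n)$, which only controls finite intersections of centralizers, not the infinite intersection $C_G([G,G])$ when $[G,G]$ is not finitely generated) rules such elements out. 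So an $\ell^1$ spectral-gap statement inside $[G,G]$ is not the right tool here.

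The paper closes this gap by reversing your order of deduction: it first proves that every $[G,G]$-conjugation invariant mean on $G$ concentrates on $Z(G)$, \emph{independently} of the existence of $H$, and only then produces $H$. Concretely, if some $[G,G]$-conjugation invariant mean $n_0$ had $n_0(Z(G))=r<1$, then the set $D$ of all $[G,G]$-conjugation invariant means $n$ on $G$ with $n(Z(G))=r$ is a nonempty compact convex set on which $G$ acts by conjugation with $[G,G]$ acting trivially; since $G/[G,G]$ is abelian, the Markov--Kakutani fixed point theorem yields a $G$-conjugation invariant mean $m$ with $m(Z(G))=r<1$, contradicting the fact you already have (from $\mathrm{ker}(m)=G$ and finite generation of $G$) that every $G$-conjugation invariant mean gives $Z(G)$ full measure. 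With this concentration statement in hand, the cluster-point mean $n$ with $n(Z(G))=0$ described above is an outright contradiction, which is precisely how the paper proves that some finitely generated $H_1\leq[G,G]$ satisfies $C_G(H_1)=Z(G)$ (and, by a parallel net of point masses, that $[G,G]\cap C_G(H_0)=1$ for some finitely generated $H_0$). So your proof can be completed, but by this Markov--Kakutani reduction to the $G$-invariant case rather than by the spectral-gap argument you sketch.
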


\emph{Remark:} In alternative (2), $Z(G)$ must be isomorphic to $\Zb ^n$ for some $n \geq 1$: since $G$ has no nontrivial finite normal subgroups, it follows that $Z(G)$ is torsion free, and since $G$ is finitely generated with $[G,G]\cap Z(G)=1$, it follows that $Z(G)$ is isomorphic to a torsionfree subgroup of the finitely generated abelian group $G/[G,G]$.

\begin{proof}
The two alternatives are indeed mutually exclusive: if (1) holds, then the collection $C$, of atomless $[G,G]$-conjugation invariant means on $[G,G]$ is a nonempty compact convex set on which $G$ acts by conjugation, with $[G,G]$ acting trivially. Since $G/[G,G]$ is abelian, by the Markov-Kakutani fixed point theorem, there is a fixed point $m\in C$, which corresponds to an atomless $G$-conjugation invariant mean with $m([G,G])=1$. Therefore, (2) cannot hold since it would imply that $m(Z(G))=1$ and hence $m(\{ 1 \} )=m([G,G]\cap Z(G))=1$, contradicting that $m$ is atomless.

Assume now (1) fails and we will show that (2) holds. Since $G$ is finitely generated, applying Lemma \ref{lem:normal} (with $N=K=G_0=G$) shows that $m(Z(G))=1$ for every $G$-conjugation invariant mean $m$ on $G$. Suppose toward a contradiction that there were some $[G,G]$-conjugation invariant mean $n_0$ on $G$ with $n_0(Z(G))=r<1$. Then the set $D$ of all $[G,G]$-conjugation invariant means $n$ on $G$ with $n(Z(G))=r$ is a nonempty compact convex set on which $G$ acts by conjugation, with $[G,G]$ acting trivially. Applying the Markov-Kakutani fixed point theorem once more yields a mean $m\in D$ which is invariant under conjugation by $G$, a contradiction.

We claim that there is some finitely generated subgroup $H_0$ of $[G,G]$ such that $[G,G]\cap C_G(H_0) =1$ (and hence $[G,G]\cap Z(G)=1$). Otherwise, for each finitely generated subgroup $H$ of $[G,G]$ we can find some $x_H\in [G,G]\cap C_G(H)$ with $x_H\neq 1$. The collection of finitely generated subgroups of $[G,G]$ is a directed set under inclusion and, taking any weak${}^*$ cluster point of the net of point masses, $(\delta _{x_H})$, in the space of means on $[G,G]$, we obtain a $[G,G]$-conjugation invariant mean $m$ on $[G,G]$ which is not the point mass at the identity element. Since (1) does not hold, the group $M$, of all elements of $[G,G]$ with finite conjugacy class, must be finite and $m$ must concentrate on $M$. Since $M$ is characteristic in $[G,G]$, it is normal in $G$, and by hypothesis $G$ has no nontrivial finite normal subgroups, hence $M=1$. This contradicts that $m$ is not the point mass at the identity element.

Similarly, we claim that there must be a finitely generated subgroup $H_1$ of $[G,G]$ such that $C_G(H_1)=Z(G)$. Otherwise, for each finitely generated subgroup $H$ of $[G,G]$ we can find some $y_H\in C_G(H)$ with $y_H\not\in Z(G)$. By taking any weak${}^*$ cluster point of the net of point masses, $(\delta _{y_H})$, in the space of means on $G$, we obtain a $[G,G]$-conjugation invariant mean $n$ on $G$ with $n(Z(G))=0$, which we already showed is impossible.
\end{proof}

\subsection{Wreath products}\label{subsec:wreath}

\begin{thm}\label{thm:wreath}
Let $K$ and $H$ be groups, with $H\neq 1$, let $X$ be a set on which $K$ acts, and let $G\defeq H\wr_XK= (\bigoplus _X H) \rtimes K$ be the (restricted) generalized wreath product. Then $G$ is inner amenable if and only if one of the following holds
\begin{enumerate}
\item The action $K\curvearrowright X$ admits an atomless $K$-invariant mean,
\item $H$ is inner amenable and the action $K\curvearrowright X$ has a finite orbit,
\item There is an atomless $K$-conjugation invariant mean $m$ on $K$ satisfying $m(K_x)=1$ for all $x\in X$, where $K_x$ denotes the stabilizer of $x$ in $K$.
\end{enumerate}
\end{thm}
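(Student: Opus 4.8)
The statement is an equivalence, so the plan splits into a (comparatively routine) sufficiency direction and a necessity direction carrying the real work. Throughout write $N\defeq\bigoplus_X H\trianglelefteq G$ for the base group, $p\colon G\to K$ for the quotient by $N$, and for $x\in X$, $h\in H$ let $\delta_x^h\in N$ denote the function equal to $h$ at $x$ and trivial elsewhere. The computation I use repeatedly is that, for $g=(f,k)\in G$, conjugation sends $\delta_x^h\mapsto\delta_{kx}^{\,f(kx)h f(kx)^{-1}}$: conjugation by the top group moves the support point $x$ to $kx$, while conjugation by the base changes only the $H$-value and only at the finitely many points of $\supp(f)$.

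For sufficiency I would build an explicit atomless conjugation invariant mean in each case. In case (1), fix $h_0\in H\setminus\{1\}$ and push an atomless $K$-invariant mean $\mu$ on $X$ forward along $x\mapsto\delta_x^{h_0}$; the displayed formula shows the pushforward is invariant under the base up to a finite (hence $\mu$-null) correction, and invariant under the top group by $K$-invariance of $\mu$. In case (2), for a finite orbit $O$ and an atomless conjugation invariant mean $m_H$ on $H$, I push $m_H$ forward along $h\mapsto\delta_x^h$ for each $x\in O$ and average over $O$: conjugation invariance of $m_H$ kills the base action coordinatewise, and averaging over $O$ kills the top action. In case (3), I push $m$ forward along $k\mapsto(1,k)$; invariance under conjugation by $(f,1)$ uses that $(f,1)(1,k)(f,1)^{-1}=(1,k)$ whenever $k$ fixes $\supp(f)$ pointwise, together with $m\big(\bigcap_{x\in\supp(f)}K_x\big)=1$, which follows from $m(K_x)=1$ for all $x$ by finite additivity. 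Atomlessness is clear in all three cases.

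For necessity, assume $G$ is inner amenable. The organizing step is to apply the Location Lemma (Lemma~\ref{lem:normal2}) to the normal subgroup $N$, taking the conjugating group and $G_0$ both equal to $G$ and $P\defeq N$; here $M=\{g\in N:|g^G|<\infty\}$. This yields the dichotomy: either (1') there is an atomless $G$-conjugation invariant mean concentrating on $N$, or (2') $M$ is finite and every conjugation invariant mean $m$ on $G$ satisfies $m\big(C_{G/M}(h)\big)=1$ for all $h\in N$. I claim (2')$\Rightarrow$(3). Using Proposition~\ref{prop:finind} choose an atomless conjugation invariant $m$ with $m(G_0)=1$ for every finite index $G_0\leq G$, and set $m_K\defeq p_*m$, which is $K$-conjugation invariant. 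First $m_K$ is atomless: otherwise $m(gN)>0$ for some $g$, whence $(\check m\ast m)(N)>0$ by Lemma~\ref{lem:convcomp}, and the normalized restriction of the atomless mean $\check m\ast m$ to $N$ would contradict the failure of (1'). Next, for $h=\delta_x^{h_0}$ with $Kx$ infinite, the element $[g,h]$ is supported on $\{x,kx\}$ with $g=(f,k)$, so $[g,h]\in M$ forces $kx=x$; hence $C_{G/M}(h)\subseteq p^{-1}(K_x)$ and $m_K(K_x)=1$ by (2'). If instead $Kx$ is finite, then $K_x$ has finite index in $K$, so $p^{-1}(K_x)$ has finite index in $G$ and $m_K(K_x)=m(p^{-1}(K_x))=1$ by the choice of $m$. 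This is exactly case (3).

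It remains to treat (1')$\Rightarrow$(1) or (2), which is where I expect the main obstacle. Let $\mu$ be an atomless $G$-conjugation invariant mean on $N$. Since $\supp\colon N\to\{\text{finite subsets of }X\}$ is $G$-equivariant with the base acting trivially, pushing $\mu$ forward gives a $K$-invariant mean on finite subsets, concentrated on nonempty sets, and the associated ``random point of the support'' kernel yields a $K$-invariant mean $\theta$ on $X$. If case (1) fails, then every $K$-invariant mean (in particular $\theta$) concentrates on the union $X_{\mathrm{fin}}$ of finite orbits — a $K$-invariant mean charging the infinite-orbit part would restrict there to an atomless one — and consequently $\mu$ concentrates on $\bigoplus_{X_{\mathrm{fin}}}H$, which is nonempty since $\mu$ is atomless, giving a finite orbit. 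Fixing $x_0$ in that orbit, the evaluation $\mathrm{ev}_{x_0}\colon N\to H$ intertwines base conjugation with $H$-conjugation, so $(\mathrm{ev}_{x_0})_*\mu$ is an $H$-conjugation invariant mean; if it (or the normalized restriction of $\mu$ to $\{f(x_0)\neq 1\}$) has nonzero atomless part, then $H$ is inner amenable and we are in case (2). The obstacle is exactly this extraction: the evaluated mean may be purely atomic, necessarily supported on the finite-conjugacy-class center $\mathrm{FC}(H)$. When $\mathrm{FC}(H)$ is infinite one averages over distinct finite classes to get inner amenability of $H$, but the delicate situation is $\mathrm{FC}(H)$ finite with every coordinate evaluation atomic; here I expect to argue that the atomlessness of $\mu$ is then ``carried by the $X$-direction'', i.e.\ that an atomless conjugation invariant mean on $\bigoplus_I F$ with $F$ finite produces an atomless $K$-invariant mean on $I$, contradicting the failure of case (1). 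Making this robust against the merely finitely additive nature of means — so that one cannot reduce to boundedly many coordinates — appears to require a tensor-square/transversality device in the spirit of Lemmas~\ref{lem:tildep} and~\ref{lem:normal2}, and is the crux of the argument.
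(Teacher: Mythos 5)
Your sufficiency constructions are all correct, and your route from the Location Lemma alternative (2$'$) to case (3) is correct and in fact cleaner than the paper's: the paper's corresponding step (its case {\bf C3}) goes through the identity $C_{G/M}(z)\cap z^{-1}MK_0z\cap MK_0=MC_{K_0}(z)$ and the auxiliary condition $m(MC_K(M))=1$, whereas your observation that $C_{G/M}(\delta_x^{h_0})\subseteq p^{-1}(K_x)$ whenever $Kx$ is infinite (a commutator supported on $\{x,kx\}$ with $kx\neq x$ has infinitely many $K$-conjugates of its support, so cannot lie in $M$) gets there directly. The split into $Kx$ infinite versus $Kx$ finite, using Proposition~\ref{prop:finind} for the latter, is exactly right.

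The genuine gap is in (1$'$)$\Rightarrow$(1) or (2), and it sits one step earlier than where you place the ``crux''. First, the inference ``$\theta$ concentrates on $X_{\mathrm{fin}}$, consequently $\mu$ concentrates on $\bigoplus_{X_{\mathrm{fin}}}H$'' is invalid for finitely additive means as stated: from $\int r\,d\nu=1$ with $0\leq r\leq 1$ one cannot conclude $\nu(\{r=1\})=1$, since the mass could sit on elements whose supports meet $X_{\mathrm{fin}}$ in proportion close to, but less than, $1$. Second --- and this is the missing idea --- once case (1) is assumed to fail, the set $X_{\mathrm{fin}}$ is \emph{finite}: infinitely many finite orbits would yield an atomless $K$-invariant mean on $X$ as a weak${}^*$ cluster point of uniform measures on growing unions of finite orbits. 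With $X_{\mathrm{fin}}$ finite, both of your difficulties evaporate. The concentration step is repaired by the paper's quantization argument: the ratio $|X_{\mathrm{fin}}\cap \mathrm{supp}(z)|/|\mathrm{supp}(z)|$ takes no value in the open interval between $|X_{\mathrm{fin}}|/(|X_{\mathrm{fin}}|+1)$ and $1$, so $\int r\,d\nu=1$ does force $\nu(\{r=1\})=1$, i.e.\ $\mu\big(\bigoplus_{X_{\mathrm{fin}}}H\big)=1$. And the extraction of inner amenability of $H$ --- your anticipated crux --- needs no tensor-square or transversality device at all: $N_0\defeq\bigoplus_{X_{\mathrm{fin}}}H$ is a \emph{finite} direct power of $H$, the restriction of $\mu$ to $N_0$ exhibits $N_0$ as inner amenable, and finitely many applications of Proposition~\ref{prop:normvsquot} pass inner amenability from $N_0$ down to $H$ (this is exactly what the paper does in its case {\bf C2}). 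Equivalently, in your evaluation-map language: with only finitely many coordinates, not every $(\mathrm{ev}_x)_*\mu$ can be purely atomic, for otherwise $\mu$ would give mass arbitrarily close to $1$ to a finite product of finite sets, contradicting atomlessness; so some coordinate carries a nonzero atomless part. The troublesome scenario you describe --- an atomless conjugation invariant mean on $\bigoplus_I F$ with $F$ finite and $I$ infinite --- simply cannot arise here, because $X_{\mathrm{fin}}$ being infinite would already have put you in case (1).
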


\emph{Remark:} It follows from the proof that either (1) or (2) holds if and only if there is an atomless conjugation invariant mean on $G$ which concentrates on $\bigoplus _X H$, and that (3) holds if and only if there is an atomless conjugation invariant mean on $G$ which concentrates on $K$.

\begin{proof}
Let $N\defeq \bigoplus _X H$, so that elements of $N$ are functions $z:X\rightarrow H$ whose support, $\mathrm{supp}(z)\defeq  \{ x\in X\, : \, z(x)\neq 1_H \}$, is finite. Then $K$ acts on $N$ via $(k\cdot z)(x)\defeq  z(k^{-1}\cdot x )$, and we identify $N$ and $K$ with subgroups of $G$ so that $G$ is the internal semidirect product $G=N\rtimes K$ and $kzk^{-1}=k\cdot z$ for all $k\in K$, $z\in N$.

Suppose first that (1) holds. Let $m$ be an atomless $K$-invariant mean on $X$. Fix some $h\in H$, $h\neq 1$, and for each $x\in X$ let $z_x \in N$ be the function $z_x(x)=h$ and $z_x(y)=1_H$ if $y\neq x$. Define the mean $\tilde{m}$ on $N$ to be the pushforward of $m$ under the map $x\mapsto z_x$. Since $z_{k\cdot x}=kz_xk^{-1}$, the mean $\tilde{m}$ is invariant under conjugation by $K$. Since $m$ is atomless, $\tilde{m}$ is atomless, and for any $z\in N$ we have $\tilde{m}(C_N(z))\geq m(X\setminus \mathrm{supp}(z))=1$ since $\mathrm{supp}(z)$ is finite. Therefore, $\tilde{m}$ is invariant under conjugation by $N$ and $K$, hence by all of $G$, so $G$ is inner amenable.

Suppose next that (2) holds. Let $X_0\subseteq X$ be a finite orbit of the action $K\curvearrowright X$, and let $K_0=\bigcap _{x\in X_0}K_x$, so that $K_0$ has finite index in $K$. Then the subgroup $N_0\defeq  \{ z\in N \, : \, \mathrm{supp}(z)\subseteq X_0 \}$ is normal in $G$ and isomorphic to the finite direct sum $N_0\cong \bigoplus _{X_0}H$. Since $H$ is inner amenable, $N_0$ is inner amenable, and hence $N_0C_G(N_0)$ is inner amenable. Since $N_0C_G(N_0)$ contains $NK_0$, it is of finite index in $G$, hence $G$ is inner amenable.

Suppose that (3) holds. For any $z\in N$ we have $C_K(z)=\bigcap _{x\in \mathrm{supp}(z)}K_x$ hence $m(C_K(z)) = 1$. Therefore, in addition to being $K$-conjugation invariant, the mean $m$ is $N$-conjugation invariant, hence it is $G$-conjugation invariant, hence $G$ is inner amenable. 

Assume now that $G$ is inner amenable and we will show that (1), (2), or (3) holds. Let $M$ denote the group of all elements of $N$ having finite $G$-conjugacy class, and let $M_H$ denote the group of elements of $H$ having finite $H$-conjugacy class. Notice that if $z\in M$, then $\mathrm{supp}(z)$ is contained in a finite $K$-invariant set, and $z(x)\in M_H$ for all $x\in X$. Therefore, if $M$ is infinite then either $K$ has infinitely many finite orbits on $X$, in which case (1) holds, or $K$ has a (nonzero) finite number of finite orbit on $X$ and $M_H$ is infinite, in which case (2) holds. We may therefore assume without loss of generality that $M$ is finite.

Let $\pi : G\rightarrow N$ be the map $\pi (zk)=z$ for $z\in N$, $k\in K$. This map is equivariant for the conjugation actions of $K$ on $G$ and $N$ respectively. If $m$ is any mean on $G$ with $m(K)<1$ then we let $m_0$ denote the normalized restriction of $m$ to $G-K$ and we define the mean $\varphi (m)$ on $X$ by
\[
\varphi (m)(A) \defeq  \int _{z\in N} \frac{|A\cap \mathrm{supp}(z)|}{|\mathrm{supp}(z)|} \, d\pi _*m_0(z)
\]
for $A\subseteq X$. It is clear that if $m$ is $K$-conjugation invariant, then $\varphi (m)$ is invariant under the action of $K$ on $X$. We will break the rest of the proof into three cases:

\begin{itemize}
\item[{\bf C1}] There exists a $K$-conjugation invariant mean $m$ on $G$ with $m(K)<1$ such that $\varphi (m)$ is not supported on a finite subset of $X$.
\item[{\bf C2}] There exists an atomless $G$-conjugation invariant mean $m$ on $G$, with $m(MK)<1$ and $m(G_0)=1$ for every finite index subgroup $G_0$ of $G$, and such that $\varphi (m)$ is supported on a finite subset of $X$.
\item[{\bf C3}] There exists an atomless $G$-conjugation invariant mean $m$ on $G$ such that $m(C_{G/M}(z))=1$ for all $z\in N$, and $m(MC_K(M))=1$.
\end{itemize}

We will show that {\bf C1} implies (1), {\bf C2} implies (2), and {\bf C3} imples (3). Let us first assume that these three implications hold and finish the proof. Suppose first that there exists an atomless $G$-conjugation invariant mean $m$ on $G$ which concentrates on $N$. By Proposition \ref{prop:finind} we may assume that $m(G_0)=1$ for every finite index subgroup of $G$. Since $m$ is atomless and $M$ is finite we have $m(MK)= m(N\cap MK)=m(M)=0$. If $\varphi (m)$ is not supported on a finite subset of $X$ then {\bf C1} holds, hence (1) holds, and if $\varphi (m)$ is supported on a finite subset of $X$ then {\bf C2} holds, hence (2) holds. We may therefore assume that there is no atomless $G$-conjugation invariant mean which concentrates on $N$. Then by Lemma \ref{lem:normal2}, for every $G$-conjugation invariant mean $m$ on $G$ we must have $m(C_{G/M}(z))=1$ for all $z\in N$.  Since $G$ is inner amenable we may find an atomless $G$-conjugation invariant mean $m$ on $G$, and by Proposition \ref{prop:finind} we may assume that $m(G_0)=1$ for every finite index subgroup $G_0$ of $G$. If $m(MK)<1$ then either {\bf C1} or {\bf C2} hold once again, so either (1) or (2) holds and we are done. We may therefore assume that $m(MK)=1$. Since $M$ is finite and normal in $G$, $C_K(M)$ is a finite index normal subgroup of $K$, and $NC_K(M)$ has finite index in $G$, hence $m(NC_K(M))=1$. Thus, $m(MC_K(M))=m(NC_K(M)\cap MK)= 1$, so {\bf C3} holds, and hence (3) holds. It remains to prove the implications {\bf Cj}$\Rightarrow$(j) for $j=1,2,3$.

Assume {\bf C1}. Since $\varphi (m)$ is $K$-invariant, if $\varphi (m)$ is not purely atomic then, after renormalizing $\varphi (m)$ on its atomless part if necessary, we see that (1) holds, and if $\varphi (m)$ is purely atomic then the action $K\curvearrowright X$ has infinitely many finite orbits, so (1) holds nonetheless.

Assume {\bf C2}. There must be a finite $K$-invariant subset $X_0\subseteq X$ such that $\varphi (m) (X_0)=1$ (in particular, the action $K\curvearrowright X$ has a finite orbit). Since $\varphi (m)(X_0)=1$, we have that $(\pi _*m_0)(S_0)=1$, where
\[
S_0\defeq  \Big\{ z\in N\setminus \{ 1_N \} \, : \, \frac{|X_0\cap \mathrm{supp}(z)|}{|\mathrm{supp}(z)|}> \frac{|X_0|}{|X_0|+1} \Big\} .
\]
Since the number $\frac{|X_0\cap \mathrm{supp}(z)|}{|\mathrm{supp}(z)|}$ belongs to the set $\{ i/|\mathrm{supp}(z)| \, : \, 0\leq i \leq |X_0| \}$, which (by considering the cases $|\mathrm{supp}(z)|>|X_0|$ and $|\mathrm{supp}(z)|\leq |X_0|$) is seen to be disjoint from the open interval with endpoints $|X_0|/(|X_0|+1)$ and $1$, it follows that any $z\in S_0$ satisfies $\frac{|X_0\cap \mathrm{supp}(z)|}{|\mathrm{supp}(z)|} = 1$, i.e., $\mathrm{supp}(z)\subseteq X_0$. Therefore,  $(\pi _*m_0)(N_0)=1$, where $N_0 \defeq  \{ z\in N \, : \, \mathrm{supp}(z) \subseteq X_0 \}$. This implies that $m(N_0K)=1$. Since $X_0$ is $K$-invariant, $N_0$ is a normal subgroup of $G$, and since $X_0$ is additionally finite, the set $K_0\defeq C_K(N_0)=\bigcap _{x\in X_0}K_x$ has finite index in $K$, and hence $NK_0$ has finite index in $G$. Thus, $m(NK_0)=1$ and therefore $m(N_0K_0)=m(N_0K\cap NK_0)=1$. Since $N_0$ and $K_0$ commute, the restriction of the map $\pi$ to $N_0K_0$ is equivariant for the conjugation action of $N_0$ on $N_0K_0$ and $N_0$ respectively, and hence $\pi _*m$ is invariant under conjugation by $N_0$. Since $N_0C_G(N_0)$ has finite index in $G$, the group $M_0$, of all elements of $N_0$ with finite $N_0$-conjugacy class, is finite is contained in $M$. Therefore, $\pi _*m(M_0)\leq m(MK)<1$, and hence $\pi _*m$ must not be purely atomic. This shows that $N_0$ is inner amenable, and since $X_0$ is finite and $N_0\cong \bigoplus _{X_0}H$, finitely many applications of Proposition \ref{prop:normvsquot} show that $H$ is inner amenable and hence (2) holds.

Assume {\bf C3}. Since $M\leq N$ is finite and normal in $G$, the set $X_0 \defeq  \bigcup _{z\in M}\mathrm{supp}(z)$ is a finite (possibly empty, if $M$ is trivial) $K$-invariant subset of $X$. Put $K_0\defeq \bigcap _{x\in X_0}K_x$ (and $K_0\defeq K$ if $M$ is trivial). Then $M=\{ z\in K\, : \, \mathrm{supp}(z)\subseteq X_0\text{ and }z(X_0)\subseteq M_H \}$, from which it follows that $K_0=C_K(M)$.

Given $z\in N$, we claim that $C_{G/M}(z)\cap z^{-1}MK_0z\cap MK_0 = MC_{K_0}(z)$. The containment $\supseteq$ is clear since $M$ is normal in $G$, so we must show the inclusion $\subseteq$. We can write $z=z_1z_0=z_0z_1$ where $\mathrm{supp}(z_0)\subseteq X_0$ and $\mathrm{supp}(z_1)\subseteq X\setminus X_0$. Let $g\in C_{G/M}(z)\cap z^{-1}MK_0z\cap MK_0$. Then $g=xk$ for some $x\in M$ and $k\in K_0$, and $zxkz^{-1}k^{-1}\in M$. Since $z_1$ commutes with $M$ and $z_0$ commutes with $K_0$ we have $zxkz^{-1}k^{-1} = z_0xz_0^{-1}z_1kz_1^{-1}k^{-1}$, and since $z_0xz_0^{-1}\in M$ this implies that $z_1kz_1^{-1}k^{-1}\in M$. But then $\mathrm{supp}(z_1kz_1^{-1}k^{-1})\subseteq X_0$, and also $\mathrm{supp}(z_1kz_1^{-1}k^{-1})\subseteq X\setminus X_0$, since $\mathrm{supp}(z_1)\subseteq X\setminus X_0$ and $X_0$ is $K_0$-invariant. Therefore, $z_1kz_1^{-1}k^{-1} = 1$ and so $k\in C_{K_0}(z_1)\cap C_{K_0}(z_0)\leq C_{K_0}(z)$.

It now follows that $m(MC_{K_0}(z))= 1$ for all $z\in N$. Define now the mean $m_K$ on $K$ by $m_K(A)\defeq  m(MA)$ for $A\subseteq K$. This is $K$-conjugation invariant since $M$ is normal in $G$. In addition, $m_K(K_x)=1$ for all $x\in X$ since $m_K(C_K(z))=m(MC_K(z))=1$ for all $z\in N$. This shows that (3) holds.
\end{proof}

\noindent {\bf Example:} Consider a Baumslag-Solitar group $G=\mathrm{BS}(p,q)=\langle t,a | ta^pt^{-1}=a^q \rangle$ with $1<p<q$. Let $A$ be the cyclic subgroup generated by $a$. Let $G$ act on $X= G/A$ by translation and consider the wreath product $W\defeq  H\wr _X G$, where $H$ is a non-trivial group. Let $S= \bigoplus _X H$, and identify $S$ and $G$ with their images in $W$. By \cite{MR2226017}, we can find a $G$-conjugation invariant atomless mean $m$ which satisfies $m(A_0)=1$ for every finite index subgroup $A_0$ of $A$. We claim that $m$ is in fact conjugation invariant for all of $W$. It suffices to show it is invariant under conjugation by elements of $S$. Given $s\in S$ let $Q_s\subseteq X$ be the support of $s$. Then the pointwise stabilizer $A_{Q_s}=\bigcap _{x\in Q_s}A_x$ of $Q_s$ in $A$ is a finite index subgroup of $A$, so satisfies $m(A_{Q_s}) = 1$. Since $A_{Q_s}\subseteq C_G(s)$, we have $m(C_G(s))=1$, and hence $m$ is invariant under conjugation by $s$.

\section{Inner amenability and \cat cube complexes}
We now study the structure of inner amenable groups acting on CAT(0) cube complexes.

\subsection{Generalities}
A \cat cube complex is, roughly speaking, a non-positively curved complex built from cubes - i.e.\ subspaces isometric to Euclidean cubes $[0,1]^n$ for some $n\in\Nb$ - glued together via isometries along a face. For an introduction to \cat cube complexes, we refer to \cite{MR3329724}; we here rely primarily on \cite{MR2827012}. We emphasize that the metric statements in this section are always for the \cat metric.

A \cat cube complex $X$ has finite dimension if there some $d$ such that every cube of $X$ has dimension at most $d$. The dimension of $X$ is the least such $d$.

The \textbf{link} of a vertex $x$ in a \cat cube complex $X$ is the simplicial complex whose vertex set is the set of edges of $X$ incident to $x$. A set of $n+1$ vertices of this complex corresponds to a $n$-simplex if and only if the corresponding edges lie in a common cube.

A group $G$ acting by simplicial automorphisms on an irreducible \cat cube complex $X$ acts \textbf{elementarily} if there is an invariant  subset $S\subset \partial X$ with at most 2 elements. Let us emphasize that this notion of elementarity is not the usual one used for groups acting on general \cat spaces (where elementarity refers to the existence either of an invariant Euclidean subspace or a fixed point at infinity). This notion coincides with elementarity for groups acting on Gromov hyperbolic spaces without bounded orbits. The action is \textbf{essential} if no ${G}$-orbit remains in a bounded neighborhood of some half-space of $X$.

For a half-space $\mathfrak{h}$, we denote by $\hat{\mathfrak{h}}$ the corresponding hyperplane and by $\mathfrak{h}^*$ the other half-space corresponding to $\hat{\mathfrak{h}}$. Sometimes, for a given hyperplane $\widehat{\mathfrak{h}}$, we let $\mathfrak{h}^\pm$ denote its two corresponding half-spaces. Two half-spaces $\mathfrak{h}$ and $\mathfrak{k}$ are \textbf{facing} if $\mathfrak{h}\cap\mathfrak{k}\neq\emptyset$ and they are not nested. A \textbf{facing triple} is a triple of pairwise facing halfspaces.


For a group ${G}$ acting by isometries on a \cat space $(X,d)$, the translation length of an element ${g}\in{G}$ is
\[
|{g}|\defeq \inf_{x\in X}d({g} (x),x).
\]
An element ${g}\in {G}$ is called \textbf{elliptic} if there is a fixed point. That is to say, $|{g}|=0$, and the infimum value is achieved. We say that ${g}$ is \textbf{hyperbolic} if $|{g}|>0$ and the infimum is achieved for some $x\in X$. Elements which are neither elliptic nor hyperbolic are called \textbf{parabolic}. For a finite dimensional \cat cube complex, it can be shown that there are no parabolic isometries \cite{MR1646316}.

These three classes of elements are disjoint and invariant under conjugation, because the action is by isometries. We denote these three subsets of ${G}$ by $\Ell({G})$, $\Hyp({G})$, and $\Par({G})$, respectively.

A hyperbolic isometry ${g}$ has \textbf{axes} which are isometrically embedded real lines on which ${g}$ acts as a translation of length $|{g}|$.  A hyperbolic isometry is \textbf{contracting} if there is an axis $L$ and some $R>0$ with the property that each ball that does not intersect $L$ has a projection to $L$ with diameter at most $R$. A contracting isometry has exactly 2 fixed points at infinity; an attractive one and a repulsive one which are the end points of any of its axes.

A \cat cube complex $X$ is said to be \textbf{pseudo-Euclidean} it has an $\Aut(X)$-invariant isometricaly embedded Euclidean subspace. If the space has dimension 1, the complex is said to be $\Rb$-\textbf{like}.

We will need two results from the work \cite{MR2558883} of Caprace and Lytchack. They introduced the telescopic dimension of \cat spaces. A finite dimensional \cat cube complex has finite telescopic dimension.

\begin{prop}[{Caprace--Lytchack, \cite[Theorem 1.1]{MR2558883}}]\label{prop:cp}
Let $X$ be a  complete \cat space of finite telescopic dimension and $(X_i)_{i\in\Nb}$ be a nested sequence of closed convex subspaces of $X$. If $\bigcap X_i=\emptyset$, then $\bigcap\bo X_i$ is nonempty and has radius at most $\pi/2$. In particular, the center of all the circumcenters of $\bigcap\bo X_i$ is unique.\end{prop}

\begin{cor}[{Caprace--Lytchack, \cite[Corollary 1.5]{MR2558883}}] Let $X$ be a complete \cat space of finite telescopic dimension. For a parabolic element $g\in\Isom(X)$, the set of $g$-fixed points in $\partial X$ is nonempty and has a canonical point $\xi_0(g)$ which is the center of the circumcenters of $g$-fxed points in $\partial X$.\end{cor}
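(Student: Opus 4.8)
The plan is to reduce the statement to Proposition~\ref{prop:cp}, applied to the sublevel sets of the displacement function of $g$. Write $d_g(x)\defeq d(gx,x)$; this function is continuous and convex on the \cat space $X$, and it is $g$-invariant since $d_g(gx)=d(g^2x,gx)=d(gx,x)=d_g(x)$. Because $g$ is parabolic, the infimum $|g|=\inf_{x\in X}d_g(x)$ is not attained. For each $i\in\Nb$ I set $X_i\defeq\{x\in X: d_g(x)\le |g|+1/i\}$. Each $X_i$ is closed and convex (being a sublevel set of a convex continuous function), nonempty (as $|g|$ is the infimum), and $g$-invariant; the sequence is nested decreasing with $\bigcap_i X_i=\{x: d_g(x)=|g|\}=\emptyset$.

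Proposition~\ref{prop:cp} then applies and yields that $C\defeq\bigcap_i\bo X_i$ is a nonempty subset of $\bo X$ of radius at most $\pi/2$ whose center of circumcenters $\xi_0$ is unique. Since each $X_i$ is $g$-invariant, so is each $\bo X_i$, hence $C$ is $g$-invariant; by the uniqueness of the center of circumcenters, $g$ must fix $\xi_0$. In particular, the set $F$ of $g$-fixed points in $\bo X$ is nonempty.

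It remains to see that $\xi_0(g)$, \emph{defined as the center of circumcenters of $F$}, makes sense, i.e.\ that $F$ has radius at most $\pi/2$ in the boundary $\bo X$, which (by \cite{MR2558883}) is a finite-dimensional $\mathrm{CAT}(1)$ space in which any subset of radius at most $\pi/2$ admits a unique center of circumcenters. I would obtain the radius bound from the inclusion $F\subseteq C$: given $\xi\in F$ and a geodesic ray $\rho$ toward $\xi$, the function $t\mapsto d_g(\rho(t))=d(\rho(t),g\rho(t))$ is convex and bounded (as $g\rho$ is asymptotic to $\rho$), hence non-increasing, converging to some $\ell(\rho)\ge|g|$; varying the ray toward $\xi$, one shows $\inf_\rho \ell(\rho)=|g|$, so some ray to $\xi$ eventually enters every $X_i$ and therefore $\xi\in C$. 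Granting $F\subseteq C$, we get $\mathrm{rad}(F)\le\mathrm{rad}(C)\le\pi/2$, so the center of circumcenters of $F$ is well defined; being canonically attached to the $g$-invariant set $F$, it is fixed by $g$, and it is the asserted point $\xi_0(g)$.

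The main obstacle is precisely the containment $F\subseteq C$, that is, the claim $\inf_\rho\ell(\rho)=|g|$: a $g$-fixed point at infinity need not be approached with asymptotically minimal displacement along every ray. For example, in $X=\mathbb{H}^2\times\Rb$ with $g$ acting as a parabolic on the first factor and trivially on the second, the two poles of the $\Rb$-factor are $g$-fixed, yet $d_g$ is constant and positive along the evident rays to them; only by also moving toward the parabolic fixed point of the $\mathbb{H}^2$-factor can one make the displacement tend to $|g|=0$. Exhibiting, for each $\xi\in F$, rays along which $d_g\to|g|$ is where the finite telescopic dimension and the boundary structure theory of \cite{MR2558883} are genuinely used; the remaining steps are formal consequences of Proposition~\ref{prop:cp}.
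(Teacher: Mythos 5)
You should first note that the paper does not prove this statement at all: it is imported verbatim, with attribution, from Caprace--Lytchak \cite[Corollary 1.5]{MR2558883}, so the only meaningful comparison is with the derivation in that reference, which is exactly the reduction you set up. Your first half is correct and is that standard derivation: the sublevel sets $X_i=\{x : d_g(x)\le |g|+1/i\}$ are nonempty, closed, convex, $g$-invariant and nested with empty intersection (parabolicity), so Proposition~\ref{prop:cp} makes $C=\bigcap_i\bo X_i$ a nonempty $g$-invariant set of radius at most $\pi/2$ whose canonical center is fixed by $g$; in particular the fixed-point set $F$ is nonempty. The genuine gap is the step you yourself flag, namely $F\subseteq C$, which is what makes ``the center of the circumcenters of the $g$-fixed points'' meaningful. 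Leaving it open leaves the statement unproved, and your diagnosis of why it is hard is mistaken: you have replaced the condition ``$\xi\in\bo X_i$ for every $i$'' by the strictly stronger condition that a \emph{single} ray to $\xi$ eventually enters \emph{every} $X_i$ (equivalently $d_g\to|g|$ along one ray). That stronger condition can indeed fail -- your $\mathbb{H}^2\times\Rb$ example shows it -- but it is not what is needed, and in that example the poles do lie in $C$ (in fact $F=C$ there), so the example is not evidence against the inclusion; it only refutes your reformulation of it.

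Here is the short argument you are missing, which uses only an observation already in your write-up. Membership in $\bigcap_i\bo X_i$ requires, for each $i$ \emph{separately}, some ray toward $\xi$ contained in $X_i$, and the ray may depend on $i$. Fix $i$ and pick any $x\in X_i$; let $\rho$ be the ray from $x$ to $\xi$. Since $g\xi=\xi$, the ray $g\rho$ also points to $\xi$, hence is asymptotic to $\rho$, so $t\mapsto d_g(\rho(t))=d\bigl(\rho(t),g\rho(t)\bigr)$ is convex and bounded, hence non-increasing. Therefore $d_g(\rho(t))\le d_g(x)\le |g|+1/i$ for all $t$, i.e.\ $\rho\subseteq X_i$ and $\xi\in\bo X_i$. (You proved non-increasingness and then asked for the limit to equal $|g|$; that is beside the point -- all that matters is that the ray never leaves $X_i$, which non-increasingness from a base point \emph{inside} $X_i$ gives for free.) This proves $F\subseteq C$. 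The reverse inclusion is equally short: if $\rho\subseteq X_i$ is a ray to $\xi$, then $d(\rho(t),g\rho(t))=d_g(\rho(t))\le|g|+1/i$ for all $t$, so $\rho$ and $g\rho$ are asymptotic and $g\xi=\xi$. Hence $F=C$, so by Proposition~\ref{prop:cp} the fixed-point set has radius at most $\pi/2$ and a unique center of circumcenters, which by canonicity is fixed by $g$ and thus lies in $F$. With these two short insertions your proof is complete and coincides with the argument of \cite{MR2558883}; no further boundary structure theory is required.
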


This implies that, for a parabolic element $g$, one has $\xi_0(hgh^{-1})=h\xi_0(g)$ for any $h\in \Isom(X)$; that is, the point $\xi_0(g)$ is equivariant in $g$. For a hyperbolic element $g$, we denote by $\xi_\pm(g)$ the attracting and repulsing fixed points of $g$. These are the boundary points such that for any $x\in X$, $g^{\pm n}(x)\to\xi_{\pm}(g)$. Again these points are equivariant with respect to $g$: for any $h\in\Isom(X)$, $\xi_\pm(hgh^{-1})=h\xi_\pm(g)$.

\subsection{Main theorem}


We first start with a general statement for groups equipped with a conjugation invariant mean acting on \cat spaces.

\begin{prop}\label{prop:ell_cat}
Let $X$ be complete \cat space of finite telescopic dimension without Euclidean de Rham factor and let ${G}$ be a group acting minimally on $X$ without fixed points at infinity. If $m$ is a conjugation invariant  mean on ${G}$, then $m(\Ell({G}))=1$.\end{prop}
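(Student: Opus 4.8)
The sets $\Ell(G)$, $\Hyp(G)$ and $\Par(G)$ partition $G$ and are each invariant under conjugation, so since $m$ is conjugation invariant it suffices to show that $m(\Par(G))=0$ and $m(\Hyp(G))=0$. I would treat the two cases uniformly by exploiting the equivariant canonical boundary points recorded above: for a parabolic $g$ the point $\xi_0(g)$ satisfies $\xi_0(hgh^{-1})=h\xi_0(g)$, and for a hyperbolic $g$ the attracting point $\xi_+(g)$ satisfies $\xi_+(hgh^{-1})=h\xi_+(g)$. Suppose toward a contradiction that $m(\Par(G))>0$ (the hyperbolic case being identical, with $\xi_+$ in place of $\xi_0$), and let $m_0$ be the normalized restriction of $m$ to the conjugation-invariant set $\Par(G)$, so that $m_0$ is again conjugation invariant.

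The plan is to manufacture, out of $m_0$, a convex function on $X$ by averaging Busemann functions. Fix a basepoint $x_0$ and let $b_\xi$ denote the Busemann function at $\xi\in\partial X$ normalized by $b_\xi(x_0)=0$. Since each $b_\xi$ is $1$-Lipschitz, for fixed $x$ the map $g\mapsto b_{\xi_0(g)}(x)$ is a bounded function on $\Par(G)$, so I may define
\[
\beta(x)\defeq \int_{\Par(G)} b_{\xi_0(g)}(x)\, dm_0(g).
\]
As an average of convex $1$-Lipschitz functions, $\beta$ is itself convex and $1$-Lipschitz, hence continuous with closed convex sublevel sets $X_t\defeq \{x\in X:\beta(x)\leq t\}$. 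Using the Busemann transformation rule $b_{h\xi}(hx)=b_\xi(x)+c(h,\xi)$, the equivariance of $\xi_0$, and the conjugation invariance of $m_0$ (which permits the substitution $g\mapsto h^{-1}gh$ under the integral), a direct computation yields $\beta(hx)=\beta(x)+C(h)$ for a homomorphism $C\colon G\to\Rb$. Consequently $G$ permutes the sublevel sets by $h\cdot X_t=X_{t+C(h)}$, so the family $\{X_t\}_t$ is $G$-invariant even when the individual sets are not.

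It remains to extract a contradiction. If $\beta$ fails to attain its infimum, or if $C$ is nontrivial, then letting $t$ decrease (to $\inf\beta$, respectively to $-\infty$) gives a nested family of nonempty closed convex sets with empty intersection; by Proposition~\ref{prop:cp} the set $\bigcap_t\partial X_t\subseteq\partial X$ is nonempty of radius at most $\pi/2$, and since the family is $G$-invariant its canonical circumcenter is a $G$-fixed point of $\partial X$, contradicting the hypothesis. Otherwise $C\equiv 0$ and $\beta$ attains its minimum, so the minimizing set is a nonempty $G$-invariant closed convex subspace; by minimality it is all of $X$, and $\beta$ is constant. This last case is the main obstacle: a constant average of Busemann functions is precisely the degenerate situation in which no convex descent direction is available, and I would rule it out via the Adams--Ballmann dichotomy in the form valid for spaces of finite telescopic dimension (due to Caprace--Lytchak, and resting on the same circumcenter machinery as Proposition~\ref{prop:cp}): it forces a $G$-invariant Euclidean subspace, which by minimality is all of $X$, contradicting the assumption that $X$ has no Euclidean de Rham factor. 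Since every case leads to a contradiction, $m(\Par(G))=0$, and the identical argument for $\xi_+$ gives $m(\Hyp(G))=0$, whence $m(\Ell(G))=1$.
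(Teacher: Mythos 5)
Your proof is correct and follows essentially the same route as the paper's: you average Busemann functions against the conjugation-invariant mean via the equivariant boundary points $\xi_0$ and $\xi_+$, then split into the no-minimum case (fixed point at infinity via Proposition~\ref{prop:cp}) and the constant case (nontrivial Euclidean de Rham factor via the Caprace--Lytchak extension of the Adams--Ballmann machinery). The only differences are cosmetic: you integrate over the group rather than first pushing the mean forward to $\partial X$, and you make the quasi-invariance character $C$ explicit where the paper leaves it implicit.
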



\begin{proof} Assume that $m(\Par({G}))>0$ (respectively $m(\Hyp({G}))>0$). By considering $m|_{\Par({G})}$ and  renormalizing it, we may assume that $m(\Par({G}))=1$ (respectively $m(\Hyp({G}))=1$). Pushing forward $m$ via $\xi_0$ (respectively $\xi_+$), we get a ${G}$-invariant mean on $\bo X$. One can think of this mean as a positive linear functional $\mu\colon\ell^\infty(\bo X)\to\Rb$ that is ${G}$-invariant. Let us fix $x_0\in X$. For $\xi\in\bo X$, let us denote by $x\mapsto\beta_\xi(x)$ the Busemann function associated to $\xi$ that vanishes at $x_0$. For $x\in X$ and any $\xi\in\partial X$, $|\beta_\xi(x)|\leq d(x,x_0)$. Hence, $\xi\mapsto \beta_\xi(x)$ is a bounded function on $\partial X$. In particular, one can ``integrate'' this function with respect to $\mu$ to obtain a real number that we denote by  $\int_{\partial X}\beta_\xi(x)d\mu(\xi)$. Let us denote by $f$ the function $x\mapsto \int_{\partial X}\beta_\xi(x)d\mu(\xi) $. For any $\xi\in\partial X$, the function $x\mapsto \beta_\xi(x)$ is  1-Lipschitz and convex, and furthermore, $\mu$ is linear and positive with $\mu(\mathbf{1})=1$. It now follows that $f$ is a convex  1-Lipschitz function on $X$ that is quasi-invariant. Moreover it lies in the closed convex hull $\mathcal{C}$ (which is compact) of Busemann functions vanishing at $x_0$. Actually, since $\Rb^X$ is endowed with the pointwise convergence topology, to prove that $f\in\mathcal{C}$, by Hahn-Banach theorem, it suffices to check that for any $x\in X$, $$\inf_{\varphi\in\mathcal{C}}\varphi(x)\leq f(x)\leq\sup_{\varphi\in\mathcal{C}}\varphi(x).$$ This follows from the definition of $f$ and the positivity of $m$.
If $f$ has no minimum then the intersection of its sublevel sets yields a ${G}$-invariant point at infinity. So $f$ has a minimum $m$. The set $\{x\in X\mid f(x)=m\}$ is closed convex and ${G}$-invariant. So by minimality, this subset coincides with $X$, that is $f$ is actually constant.
By \cite[Proposition 4.8 and Lemma 4.10]{MR2558883}, $X$ has a non-trivial Euclidean de Rham factor which is a contradiction.\end{proof}


\begin{cor}\label{ell}
Let $X$ be an irreducible \cat cube complex of finite dimension and let ${G}$ be a group acting essentially and non-elementarily by automorphims on $X$. If $m$ is a conjugation invariant  mean on ${G}$, then $m(\Ell({G}))=1$.\end{cor}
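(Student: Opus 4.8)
The plan is to deduce the Corollary directly from Proposition~\ref{prop:ell_cat}, after reducing to a minimal invariant subspace. A finite dimensional \cat cube complex is a complete \cat space of finite telescopic dimension, so that hypothesis of the Proposition is immediate; the remaining three requirements---minimality of the action, absence of a $G$-fixed point at infinity, and absence of a Euclidean de Rham factor---must be extracted from essentiality, irreducibility, and non-elementarity.

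First I would handle the boundary and minimality at once. Non-elementarity means there is no $G$-invariant subset of $\partial X$ of size at most two, so in particular $G$ fixes no point at infinity. I would then run Zorn's lemma on the poset of nonempty $G$-invariant closed convex subsets of $X$ ordered by reverse inclusion: if $(X_i)$ is a decreasing chain with $\bigcap _i X_i = \emptyset$, then by Proposition~\ref{prop:cp} the set $\bigcap _i \partial X_i$ is nonempty of radius at most $\pi /2$, and its canonical circumcenter is a $G$-fixed point at infinity, a contradiction. Hence every chain has nonempty intersection, and Zorn's lemma produces a nonempty minimal $G$-invariant closed convex subspace $X_0\subseteq X$. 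Since $X_0$ is convex we have $\partial X_0\subseteq \partial X$, so $G$ still fixes no point at infinity of $X_0$, and the action $G\curvearrowright X_0$ is minimal by construction.

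Next I would rule out a Euclidean de Rham factor of $X_0$, which is where I expect the real work to lie. The cleanest route is to invoke the rank rigidity theorem of Caprace--Sageev \cite{MR2827012}: an essential, non-elementary action on an irreducible finite dimensional \cat cube complex contains a contracting isometry $g$. Such a $g$ preserves the canonical subspace $X_0$ and its restriction to $X_0$ is still contracting, and a contracting isometry is incompatible with a nontrivial Euclidean de Rham factor (its axis would be forced to bound a flat strip and hence fail to be contracting). Alternatively, one checks this more by hand: an $\Rb$-like factor would yield a $G$-invariant pair of points of $\partial X$, contradicting non-elementarity, while irreducibility excludes any product-type factor; but reconciling the combinatorial notion of irreducibility with the metric de Rham decomposition of the \emph{subspace} $X_0$ is the delicate point, and this is exactly what the Caprace--Sageev structure theory is needed for.

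With all hypotheses verified, Proposition~\ref{prop:ell_cat} applied to $G\curvearrowright X_0$ gives $m(\Ell_{X_0}(G))=1$, where $\Ell_{X_0}(G)$ denotes the set of elements acting elliptically on $X_0$. Finally, since $X_0$ is $G$-invariant, every $g\in G$ preserves $X_0$, and any $g$ fixing a point of $X_0$ also fixes a point of $X$; thus $\Ell_{X_0}(G)\subseteq \Ell(G)$ and therefore $m(\Ell(G))=1$, as desired. The boundary step and this ellipticity-preservation step are routine; the main obstacle is the verification that the minimal subspace carries no Euclidean de Rham factor.
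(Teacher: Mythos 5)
Your overall skeleton is the same as the paper's: pass to a minimal $G$-invariant closed convex subspace (the paper simply cites Caprace--Lytchak, Prop.\ 1.8(ii), rather than re-running Zorn's lemma; your Zorn argument is the standard proof of that citation, modulo the fact that Proposition~\ref{prop:cp} as stated covers only countable nested sequences rather than arbitrary chains), then apply Proposition~\ref{prop:ell_cat}, the real content being that the minimal subspace $X_0$ has trivial Euclidean de Rham factor. The gap is in that last step. Your main claim --- that a contracting isometry is incompatible with \emph{any} nontrivial Euclidean factor because ``its axis would be forced to bound a flat strip and hence fail to be contracting'' --- is false. Bounding a flat strip of finite width does not obstruct the contracting property (the obstruction is a flat half-plane, or strips of unbounded width), and the one-dimensional case genuinely survives your argument: if $X_0$ is a single geodesic line (so $X_0$ equals its Euclidean factor $E$ with $\dim E=1$), then every translation along it is vacuously contracting, since no ball is disjoint from the axis. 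Writing $X_0=Y'\times E$, a contracting isometry with axis in $X_0$ only forces $\dim E\leq 1$: when $\dim E\geq 2$, or when $\dim E=1$ and $Y'$ is unbounded, every geodesic line in $X_0$ bounds flat half-planes or arbitrarily wide flat strips, so cannot be contracting; but when $\dim E=1$ and $Y'$ is bounded (hence, by minimality, a point) nothing contradicts contraction.

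The paper closes exactly this residual case with non-elementarity, not with the contracting isometry: if $\dim E=1$, then $\partial E$ is a $G$-invariant pair of points in $\partial X_0\subseteq\partial X$, contradicting the hypothesis that no invariant subset of $\partial X$ has at most two elements. You actually possess this ingredient in your ``alternative'' paragraph (an $\Rb$-like factor yields an invariant pair at infinity), but you present it as a competing route and couple it with the unjustified claim that ``irreducibility excludes any product-type factor'' --- which, as you yourself concede, does not transfer to the metric de Rham splitting of the subspace $X_0$. The correct proof is not either route alone but their combination, and this is what the paper does: rank rigidity (Caprace--Sageev, Thm.\ 6.3) supplies a contracting isometry with axis in $X_0$, which rules out $\dim E\geq 2$; non-elementarity then rules out $\dim E=1$. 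As written, your primary argument would ``prove'' that a geodesic line admits no contracting isometry, so the step fails and the proof is incomplete without the non-elementarity finish.
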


\begin{proof}
Since there is no fixed point at infinity, there is a minimal ${G}$-invariant \cat subspace $Y\subset X$ \cite[Proposition 1.8(ii)]{MR2558883}.	

Thanks to Proposition \ref{prop:ell_cat}, it suffices to show that $Y$ has trivial Euclidean de Rham factor. So, let us assume that $Y$ has a splitting $Y=Y'\times E$ where $E$ is the Euclidean de Rham factor of $Y$ and $\dim(E)\geq1$. By \cite[Theorem 6.3]{MR2827012}, ${G}$ has a contracting isometry (with an axis in $Y$) and thus $\dim(E)\leq1$. So $\dim(E)=1$. The boundary $\partial E$ gives a ${G}$-invariant pair of points in $\partial Y\subset\partial X$ and thus a contradiction with non-elementarity. \end{proof}

\begin{prop}\label{ccc} Let ${G}$ be a group acting essentially and non-elementarily on a finite dimensional irreducible \cat cube complex $X$. If $m$ is a conjugation invariant mean on ${G}$, then
\[
m(\{{g}\in{G} \mid \Fix({g})\cap\mathfrak{h}\})=1
\]
for every half-space $\mathfrak{h}$.
\end{prop}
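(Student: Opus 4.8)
Fix a half-space $\mathfrak{h}$ and a conjugation invariant mean $m$ on $G$. By Corollary~\ref{ell} we already know that $m(\Ell(G))=1$, so every element we need to worry about is elliptic and hence has a nonempty fixed-point set $\Fix(g)$ which is a closed convex subcomplex of $X$. The goal is to show that the set $A_{\mathfrak{h}}\defeq\{g\in G\mid \Fix(g)\cap\mathfrak{h}\neq\emptyset\}$ is $m$-conull. The natural strategy is a transversality/contradiction argument: I would suppose for contradiction that $m(A_{\mathfrak{h}})<1$, so that the complementary set $B\defeq\{g\in \Ell(G)\mid \Fix(g)\subseteq \mathfrak{h}^*\}$ (elements whose fixed-point set avoids $\mathfrak{h}$, hence lies entirely in the opposite half-space $\mathfrak{h}^*$) has positive $m$-measure. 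After renormalizing, I may assume $m(B)=1$.

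\textbf{Using conjugation invariance and essentiality to produce a contradiction.} The key geometric input is that, because the action is essential and non-elementary on an irreducible complex, there are contracting/hyperbolic isometries and an abundance of half-spaces; in particular essentiality lets me find group elements $t\in G$ that push $\mathfrak{h}^*$ deep inside $\mathfrak{h}$, or more precisely find a half-space configuration witnessing that the $G$-translates of $\mathfrak{h}$ are ``spread out''. The idea is to exploit that if $g$ fixes a point in $\mathfrak{h}^*$ and $t$ is chosen so that $t\mathfrak{h}^*$ is strongly nested inside $\mathfrak{h}$ (a facing or deep-nesting configuration obtainable from essentiality via \cite{MR2827012}), then the conjugate $tgt^{-1}$ fixes a point in $t\mathfrak{h}^*\subseteq\mathfrak{h}$. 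Conjugation invariance gives $m(t B t^{-1})=m(B)=1$, so $m(B\cap tBt^{-1})=1$. I would then argue that an element $g$ lying in $B\cap tBt^{-1}$ must fix points simultaneously in $\mathfrak{h}^*$ and in $t\mathfrak{h}^*$, and by choosing several translates $t_1,\dots,t_k$ (using a facing triple or a long chain of disjoint nested half-spaces, again available by essentiality and non-elementarity) one forces $g$ to fix points in pairwise ``far apart'' regions. A single elliptic isometry has convex fixed-point set, so if these regions are separated by enough disjoint hyperplanes the convexity of $\Fix(g)$ is contradicted, since $\Fix(g)$ would have to cross all of them and yet be contained in each of the prescribed half-spaces.

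\textbf{Main obstacle.} The crux, and the step I expect to be hardest, is the transversality combinatorics: arranging a finite family of group elements $t_i$ whose images $t_i\mathfrak{h}^*$ are pairwise disjoint (a facing family) so that no single convex set can meet all of them while simultaneously being trapped in $\mathfrak{h}^*$. This is exactly where the Caprace--Sageev machinery (existence of facing triples and the flipping/double-skewering lemmas from \cite{MR2827012}) must be invoked, and the delicate point is to get the half-spaces genuinely disjoint rather than merely nested, so that intersecting $\Fix(g)$ with them yields an honest contradiction with convexity. Once such a configuration is in hand, the measure-theoretic part is routine finite additivity: $m\big(\bigcap_i t_i B t_i^{-1}\big)=1$ forces a common element $g$ with the contradictory fixing property, completing the argument by contradiction. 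I would organize the write-up so that the geometric lemma producing the facing family is isolated first, and the mean-theoretic conclusion follows in a line or two.
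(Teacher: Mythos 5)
Your plan founders on the renormalization step, and the issue it hides is precisely the central difficulty of this proposition. The set $B=\{g\in\Ell(G)\mid\Fix(g)\subseteq\mathfrak{h}^*\}$ is \emph{not} invariant under conjugation (indeed $tBt^{-1}=\{g\mid\Fix(g)\subseteq t\mathfrak{h}^*\}$), so the normalized restriction of $m$ to $B$ is no longer a conjugation invariant mean; you cannot simultaneously have $m(B)=1$ and conjugation invariance. Without $m(B)=1$ the key step fails badly: for a finitely additive mean, finitely many sets of positive measure can have empty intersection, so $m(B)>0$ and $m(tBt^{-1})=m(B)>0$ give no lower bound whatsoever on $m(B\cap tBt^{-1})$. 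In fact, in your own configuration ($t\mathfrak{h}^*\subseteq\mathfrak{h}$, hence $t\mathfrak{h}^*\cap\mathfrak{h}^*=\emptyset$) the set $B\cap tBt^{-1}$ is literally empty, since a nonempty fixed-point set cannot be contained in two disjoint half-spaces; this is perfectly consistent with $m(B)>0$ and yields no contradiction --- it merely certifies that the renormalized mean was not conjugation invariant. So the ``routine finite additivity'' you defer to at the end is exactly the step that cannot be carried out; the geometric input (facing families from Caprace--Sageev) is the comparatively easy part here.

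The paper's proof is organized precisely to avoid intersecting conjugates of positive-measure sets: with disjoint half-spaces one gets \emph{disjoint} sets of elliptic elements, so measures add rather than intersect. Concretely, it partitions the set of all hyperplanes into $\mathcal{W}_1$ (those $\hat{\mathfrak{h}}$ for which $m$-a.e.\ $g$ has fixed points on both sides) and $\mathcal{W}_2$ (those for which your set $B$ or its mirror image has positive measure), proves these two families are transverse using the double-skewer lemma together with conjugation invariance, and invokes the Caprace--Sageev product decomposition plus irreducibility to conclude that if $\mathcal{W}_2\neq\emptyset$ then \emph{every} hyperplane lies in $\mathcal{W}_2$. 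It then eliminates the hyperplanes whose two sides carry unequal measure via a nested-intersection/essentiality argument and Proposition \ref{prop:cp} (which would produce a fixed point at infinity, contradicting non-elementarity), and finally takes a facing triple $\mathfrak{h}_1,\mathfrak{h}_2,\mathfrak{h}_3\in\mathcal{W}_2$: disjointness gives the additivity inequalities $\alpha_1\geq\alpha_2+\alpha_3$ and $\alpha_2\geq\alpha_1+\alpha_3$, forcing $\alpha_3=0$, a contradiction. Your disjoint-translates idea could in principle be repaired in this additive spirit --- if $m(B)=\varepsilon>0$ and one produces more than $1/\varepsilon$ pairwise disjoint $G$-translates of $\mathfrak{h}^*$, then the pairwise disjoint sets $t_iBt_i^{-1}$, all of measure $\varepsilon$ by conjugation invariance of the \emph{original} mean, violate finite additivity --- but producing arbitrarily many pairwise disjoint translates of a \emph{given} half-space is itself a nontrivial claim (flipping gives two; obtaining a third is essentially the facing-triple problem within a single orbit of hyperplanes), and it is exactly what the paper's global $\mathcal{W}_1/\mathcal{W}_2$ analysis is designed to circumvent.
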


%
%


\begin{proof}





We consider two collections of hyperplanes of $X$:
\[
\mathcal{W}_1\defeq \Big\{\hat{\mathfrak{h}}\mid \ m\big(\{{g}\mid \Fix({g})\cap\mathfrak{h}\neq\emptyset\text{ and }\Fix({g})\cap\mathfrak{h}^*\neq\emptyset\}\big)=1\Big\},
\]
and
\[
\mathcal{W}_2\defeq \Big\{\hat{\mathfrak{h}} \mid \ m\big(\{{g}\mid \Fix({g})\subset\mathfrak{h}\}\big)>0\text{ or }m\big(\{{g}\mid \Fix({g})\subset\mathfrak{h}^*\}\big)>0\Big\}.
\]
By Proposition \ref{ell}, we know that $m(\Ell({G}))=1$, hence $\mathcal{W}=\mathcal{W}_1\sqcup\mathcal{W}_2$ where $\mathcal{W}$ is the set of all hyperplanes of $X$. We aim to show that $\mathcal{W}_2=\emptyset$.

Let us argue that $\mathcal{W}_1$ and $\mathcal{W}_2$ are transverse. That is, for any $\widehat{\mathfrak{h}}_i\in\mathcal{W}_i$, $\widehat{\mathfrak{h}}_1\cap\widehat{\mathfrak{h}}_2\neq\emptyset$. Suppose toward a contradiction that there are $\hat{\mathfrak{h}}_1\in\mathcal{W}_1$ and $\hat{\mathfrak{h}}_2\in\mathcal{W}_2$ that are not transverse. We  may assume that $\mathfrak{h}_1$ and $\mathfrak{h}_2$ are facing. Since $\mathfrak{h}_2^*\subset\mathfrak{h}_1$, $m(\{{g}\mid \Fix({g})\subset\mathfrak{h}_2^*\})=0$. On the other hand, the double skewer lemma, \cite[Section 1.2]{MR2827012}, ensures that there is $\delta \in{G}$ such that $\delta \mathfrak{h}_1\subset\mathfrak{h}_2^*$. Hence, $m(\{{g}\mid \Fix({g})\subset\mathfrak{h}_2\})=0$ which is a contradiction. We conclude that $\mathcal{W}_1$ and $\mathcal{W}_2$ are transverse.

By \cite[Lemma 2.5]{MR2827012}, $X$ splits as a product of CAT(0) cube complexes $X_1\times X_2$ (with possibly a trivial factor) where $\mathcal{W}_1$ and $\mathcal{W}_2$ are respectively the hyperplane systems of $X_1$ and $X_2$. By irreducibility, we know that one of this factor is trivial. If $\mathcal{W}_2\neq\emptyset$ then $X=X_2$.
In this case, define
\[
\mathcal{W}^\prime_2\defeq \left\{\hat{\mathfrak{h}}\in\mathcal{W}_2\mid\ m(\{{g}\mid \Fix({g})\subset\mathfrak{h}\})\neq m(\{{g}\mid \Fix({g})\subset\mathfrak{h}^*\})\right\}.
\]
For $\hat{\mathfrak{h}}\in\mathcal{W}^\prime_2$, we may choose the half-space $\mathfrak{h}$ such that
\[
m(\{{g}\mid \Fix({g})\subset\mathfrak{h}\})> m(\{{g}\mid \Fix({g})\subset\mathfrak{h}^*\}).
\]
Take $\hat{\mathfrak{h}}$ and $\hat{\mathfrak{k}}$ in $\mathcal{W}^\prime_2$ and suppose that $\mathfrak{h}\cap\mathfrak{k}=\emptyset$. It is then the case that $\mathfrak{k}\subset\mathfrak{h}^*$. Hence,
\[
m\left(\{{g}\mid \Fix({g})\subset\mathfrak{h}\}\right)> m(\{{g}\mid \Fix({g})\subset\mathfrak{h}^*\})\geq m(\{{g}\mid \Fix({g})\subset\mathfrak{k}\}).
\]
On the other hand, $\mf{h}\subset\mf{k}^*$, so the same computation with the roles reversed gives that  $m(\{{g}\mid \Fix({g})\subset\mathfrak{h}\})<m(\{{g}\mid \Fix({g})\subset\mathfrak{k}\})$, which is a contradiction.

It now follows, thanks to the Helly-type property for \cat cube complexes, that $\bigcap_{\hat{\mf{h}}\in \mc{W}^{\prime}_2}\mathfrak{h}$ is an intersection of nested closed convex sets. The intersection $\bigcap_{\hat{\mf{h}}\in \mc{W}^{\prime}_2}\mathfrak{h}$  is  furthermore ${G}$-invariant, and since the action of ${G}$ on $X$ is essential, this intersection is empty. Thanks to Proposition \ref{prop:cp}, there is a global fixed point at infinity for the action of ${G}$ contradicting that the action is non-elementary. We conclude that $\mathcal{W}^\prime_2=\emptyset$.

For any Hyperplane $\hat{\mathfrak{h}}\in\mathcal{W}_2$, it is thus the case that
\[
m(\{{g}\mid \Fix({g})\subset\mathfrak{h}\})= m(\{{g}\mid \Fix({g})\subset\mathfrak{h}^*\}),
\]
and this value is non-zero. Since $X$ is irreducible, $\Aut(X)$ has no invariant Euclidean subspace otherwise  it would be $\Rb$-like (\cite[Lemma 7.1]{MR2827012}) and thus, there would be an invariant pair at infinity corresponding to the ends of the invariant line. Let us now consider a facing triple $\mathfrak{h}_1,\mathfrak{h}_2,\mathfrak{h}_3\in\mathcal{W}_2$, which exists thanks to \cite[Theorem 7.2]{MR2827012}. That the triple is facing ensures that $\mf{h}_j^{*}\subseteq \mf{h}_k$ for $k\neq j$. In particular, $\mf{h}_i^*\cap \mf{h}_j^*=\emptyset$ for $i\neq j$. Setting $\alpha_i\defeq m(\{{g}\mid \Fix({g})\subset\mathfrak{h}^*_i\})$, we see that $\alpha_1\geq\alpha_2+\alpha_3$ and $\alpha_2\geq\alpha_1+\alpha_3$. Hence, $\alpha_3=0$ which is a contradiction.
\end{proof}

\begin{lem}\label{lem:bounded}Let ${G}$ be a group acting essentially and non-elementary on an irreducible \cat cube complex $X$. Let $m$ be any conjugation invariant mean on ${G}$. Then there is some $x_0\in X$ and some $C>0$ such that
\[
m(\{{g}\in{G}\mid\Fix({g})\cap B(x_0,C)\neq\emptyset\})=1.
\]
Moreover the set $X_C$, of all such points, is convex and ${G}$-invariant.
\end{lem}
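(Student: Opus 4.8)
The plan is to fix a base vertex $x_0\in X$ and to prove that the quantity
\[
f(g)\defeq \#\{\mathfrak{h}\mid x_0\in\mathfrak{h}^*\text{ and }\Fix(g)\subseteq\mathfrak{h}\},
\]
the number of half-spaces separating $x_0$ from the convex set $\Fix(g)$, is bounded on an $m$-conull set. Since $X$ is finite dimensional, $f(g)$ is comparable to the \cat distance $d(x_0,\Fix(g))$, so an $m$-essential bound on $f$ yields a constant $C$ with $m(\{g\mid\Fix(g)\cap B(x_0,C)\neq\emptyset\})=1$. By Corollary~\ref{ell} we may discard the $m$-null set $G\setminus\Ell(G)$, so that every $g$ under consideration has a nonempty convex fixed-point set, and $f(g)$ is well defined and finite for each such $g$.

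The only pointwise information available is Proposition~\ref{ccc}: applied to $\mathfrak{h}^*$ it gives $m(\{g\mid\Fix(g)\subseteq\mathfrak{h}\})=0$, i.e.\ each \emph{fixed} half-space separates $x_0$ from $\Fix(g)$ for only an $m$-null set of $g$. Finite dimensionality is what lets one organize these separating half-spaces: any family of pairwise transverse hyperplanes has at most $\dim X$ members, so among separating half-spaces transversality is the only incomparability, and Dilworth's theorem shows that if $f(g)>(\dim X)\cdot L$ then the separating half-spaces of $g$ contain a nested chain $\mathfrak{h}_1\supsetneq\cdots\supsetneq\mathfrak{h}_L$ with $x_0\in\mathfrak{h}_1^*$ and $\Fix(g)\subseteq\mathfrak{h}_L$. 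Thus it suffices to bound, on an $m$-conull set, the length of the longest nested chain of separating half-spaces.

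Ruling out arbitrarily long such chains on a set of positive measure is the heart of the matter, and I expect it to be the main obstacle, since it amounts to forbidding the convex sets $\Fix(g)$ from escaping to infinity while, by Proposition~\ref{ccc}, no individual direction carries mass. This is precisely where I would adapt the geometry of Caprace--Sageev \cite{MR2827012}. Using essentiality, the double skewer lemma, and a contracting isometry (Theorem~6.3 of \cite{MR2827012}, already invoked in Corollary~\ref{ell}), one has a rich supply of deep nested half-spaces; the aim is to convert a positive-measure family of ever-longer separating chains into a single descending sequence $\mathfrak{k}_1\supsetneq\mathfrak{k}_2\supsetneq\cdots$ of half-spaces which, after symmetrizing over $G$, is $G$-invariant, and for which essentiality forces $\bigcap_n\mathfrak{k}_n=\emptyset$. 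Proposition~\ref{prop:cp} of Caprace--Lytchak then furnishes a canonical $G$-fixed point of $\bigcap_n\partial\mathfrak{k}_n$ in $\partial X$, contradicting non-elementarity. Making this passage from ``escaping mass'' to a genuinely $G$-invariant empty nested intersection precise is the delicate step, and is where the bulk of the work lies.

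Granting the resulting bound on $f$, the first assertion follows, and the set $X_C=\{x\mid m(\{g\mid\Fix(g)\cap B(x,C)\neq\emptyset\})=1\}$ is handled by soft \cat arguments. For $G$-invariance, note that $\Fix(g)\cap B(hx,C)\neq\emptyset$ if and only if $\Fix(h^{-1}gh)\cap B(x,C)\neq\emptyset$, so the defining conull set for $hx$ is the $h$-conjugate of that for $x$; conjugation invariance of $m$ then gives $hx\in X_C$ whenever $x\in X_C$. For convexity, if $x_0,x_1\in X_C$ then for $m$-almost every $g$ (intersecting the two conull sets) one finds $p_0\in\Fix(g)\cap B(x_0,C)$ and $p_1\in\Fix(g)\cap B(x_1,C)$; since $\Fix(g)$ is convex the geodesic $[p_0,p_1]$ lies in $\Fix(g)$, and for the point $x_t$ at parameter $t$ on $[x_0,x_1]$ the corresponding point $p_t\in[p_0,p_1]\subseteq\Fix(g)$ satisfies $d(x_t,p_t)\leq(1-t)d(x_0,p_0)+t\,d(x_1,p_1)\leq C$ by convexity of the \cat metric, so $x_t\in X_C$. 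Hence $X_C$ is convex and $G$-invariant.
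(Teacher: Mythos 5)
Your reduction steps are fine ($m$-a.e.\ $g$ is elliptic by Corollary~\ref{ell}; each fixed half-space separates $x_0$ from $\Fix(g)$ only on a null set by Proposition~\ref{ccc}; Dilworth plus finite dimension converts a large separation number into a long nested chain), and your treatment of the ``moreover'' clause --- $G$-invariance via conjugating the conull set and convexity via convexity of the \cat metric applied to geodesics inside the convex sets $\Fix(g)$ --- is correct and is essentially the paper's own argument. But the first assertion of the lemma is left unproven: you yourself flag that ruling out arbitrarily long separating chains on sets of positive measure is ``where the bulk of the work lies,'' and the route you sketch for it does not obviously close. Since $m$ is only finitely additive, you cannot discard the countably many null sets $\{g\mid \Fix(g)\subseteq\mathfrak{h}\}$, and your plan to extract a $G$-invariant descending sequence $\mathfrak{k}_1\supsetneq\mathfrak{k}_2\supsetneq\cdots$ runs into the fact that no single half-space carries positive mass of fixed-point sets: the collection $\mathcal{W}_2$ in the proof of Proposition~\ref{ccc} is empty. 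So there is no canonical --- hence no $G$-equivariantly symmetrizable --- choice of such half-spaces; the ``escaping mass'' is exactly the configuration your chains detect but cannot pin to any invariant nested family of convex sets to which Proposition~\ref{prop:cp} could be applied.

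What fills the gap in the paper is strong separation, used to choose $x_0$ and $C$ cleverly rather than to bound $d(x_0,\Fix(g))$ for an arbitrary basepoint. By the first paragraph of the proof of \cite[Theorem 6.3]{MR2827012}, there is $g_0\in G$ skewering a hyperplane $\widehat{\mathfrak{h}}$, with $g_0\mathfrak{h}^+\subset\mathfrak{h}^+$ and with $\widehat{\mathfrak{h}}$ and $g_0\widehat{\mathfrak{h}}$ strongly separated; one takes $x_0$ on an axis of $g_0$ crossing $\widehat{\mathfrak{h}}$. The proof of \cite[Lemma 6.1]{MR2827012} then yields a constant $C>0$ such that \emph{every} geodesic from a point of $g_0^{-1}\mathfrak{h}^-$ to a point of $g_0\mathfrak{h}^+$ meets $B(x_0,C)$. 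With this in hand, only two applications of Proposition~\ref{ccc} are needed: for $m$-a.e.\ $g$ the convex set $\Fix(g)$ meets both $g_0^{-1}\mathfrak{h}^-$ and $g_0\mathfrak{h}^+$, hence contains a geodesic joining them, hence meets $B(x_0,C)$. The unboundedness you were trying to rule out by contradiction never has to be confronted: strong separation converts two fixed half-space conditions into a uniform bound, and this single geometric input is what your outline is missing.
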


The lemma follows essentially from Proposition \ref{ccc} and the ideas that gives the existence of contracting isometries in \cite{MR2827012}. We urge the reader to have a copy of \cite{MR2827012} at hand to follow the proof.
\begin{proof}
By the first paragraph of the proof of \cite[Theorem 6.3]{MR2827012}, there is ${g} _0\in {G}$ that skewers some hyperplan $\widehat{\mathfrak{h}}$, with ${g} _0\mathfrak{h}^+\subset\mathfrak{h}^+$, and such that $\widehat{\mathfrak{h}}$ and ${g} _0\widehat{\mathfrak{h}}$ are strongly separated. Let $x_0$ be the intersection of some axis of ${g} _0$ and $\widehat{\mathfrak{h}}$. By the proof of \cite[Lemma 6.1]{MR2827012}, there is $C>0$ such that for any $a\in{g} _0^{-1}\mathfrak{h}^-$ and $b \in{g} _0\mathfrak{h}^+$, the geodesic $[a,b]$ meets $B(x_0,C)$. By Proposition \ref{ccc}, there is a measure 1 set of elements of ${G}$ such that any ${g}$ in this set, $\Fix({g})\cap {g}^{-1}_0\mathfrak{h}^-\neq\emptyset$ and $\Fix({g})\cap {g} _0\mathfrak{h}^+\neq\emptyset$. Thus for any ${g}$ in this set, $\Fix({g})\cap B(x_0,C)\neq\emptyset$.

Since the mean $m$ is conjugation invariant, it follows directly from the definition that $X_C$ is ${G}$-invariant. Now let $x,y\in X_C$ and let $c\colon[0,1]\to X$ be a parametrization of $[x,y]$ proportional to arc-length. There is a measure 1 subset ${G} _0\subset {G}$ such that for all ${g}\in{G}_0$, $\Fix({g})\cap B(x,C)\neq\emptyset$ and $\Fix({g})\cap B(y,C)\neq\emptyset$. Fix ${g}\in{G}_0$, $x'\in \Fix({g})\cap B(x,C)$ and $y'\in\Fix({g})\cap B(y,C)$. Let $c’$ be the similar parametrization of $[x',y']$. By convexity of the metric \cite[II.2.2]{MR1744486}, $d(c(t),c’(t))\leq C$ for any $t\in[0,1]$. Since $c'(t)$ is fixed by ${g}$ as well, the result follows.
\end{proof}

A mean on a group ${G}$ is said to be an \textbf{idempotent} if $m\ast m=m$.

\begin{lem}\label{prop:almost-fix}
Let ${G}$ be a group acting minimally by isometries on some complete \cat space $X$. Let $m$ be a conjugation invariant idempotent mean on ${G}$. Let $C>0$ and suppose $m(\{{g}\in {G}\mid \Fix({g})\cap B(x,C)\neq\emptyset\})=1$ for every $x\in X$.

Then $m(\{{g}\in{G}\mid d(x,{g} x)<\varepsilon\})=1$ for every $x\in X$ and every $\varepsilon>0$.
\end{lem}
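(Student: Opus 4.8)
The plan is to reduce the statement to showing that the single function $\psi(x)\defeq \int_G d(x,gx)^2\,dm(g)$ vanishes identically on $X$. First I would record that $\psi$ is well defined and bounded: applying the hypothesis at $x$, an $m$-conull set of $g$ have a fixed point $y\in\Fix(g)\cap B(x,C)$, and for such $g$ one has $d(x,gx)\le d(x,y)+d(y,gx)=2d(x,y)\le 2C$, so the essentially bounded integrand gives $0\le \psi(x)\le 4C^2$. A Chebyshev-type estimate for means shows that $\psi(x)=0$ is equivalent to the desired conclusion at $x$: if $m(\{g:d(x,gx)\ge\varepsilon\})>0$ then $\psi(x)\ge \varepsilon^2 m(\{g:d(x,gx)\ge\varepsilon\})>0$, while conversely $\psi(x)=0$ forces each such set to be null. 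I would also note two structural facts. Since $x\mapsto d(x,gx)$ is convex and $2$-Lipschitz, $\psi$ is convex and, using the a.e.\ bound on displacements, $8C$-Lipschitz, hence continuous. And since $m$ is conjugation invariant, the substitution $g\mapsto hgh^{-1}$ shows $\psi(hx)=\psi(x)$ for all $h\in G$, so $\psi$ is $G$-invariant.

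Next I would produce one point at which $\psi$ vanishes, using a fixed basepoint $x_0\in X$ and the auxiliary function $\varphi(w)\defeq \int_G d(w,gx_0)^2\,dm(g)$. The hypothesis at $x_0$ gives $d(x_0,gx_0)\le 2C$ for $m$-a.e.\ $g$, so $\varphi(w)\le (d(w,x_0)+2C)^2<\infty$ everywhere. Integrating the \cat (CN) inequality $d(\mathrm{mid}(a,b),p)^2\le \tfrac12 d(a,p)^2+\tfrac12 d(b,p)^2-\tfrac14 d(a,b)^2$ against $p=gx_0$ yields the strong convexity estimate $\varphi(\mathrm{mid}(a,b))\le \tfrac12\varphi(a)+\tfrac12\varphi(b)-\tfrac14 d(a,b)^2$; together with completeness this makes any $\varphi$-minimizing sequence Cauchy, so $\varphi$ attains a unique minimum at some $z\in X$.

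The heart of the argument is to exploit idempotency at $z$. Writing $f(g)\defeq d(z,gx_0)^2$, truncated at a constant $R$ large enough that it agrees with $d(z,gx_0)^2$ on all the conull sets arising below, the identity $m\ast m=m$ gives $\int_G f\,dm=\int_G\!\int_G f(g_1g_2)\,dm(g_2)\,dm(g_1)$, and since $d(z,g_1g_2x_0)^2=d(g_1^{-1}z,g_2x_0)^2$ the inner integral is $\varphi(g_1^{-1}z)$; hence $\varphi(z)=\int_G \varphi(g^{-1}z)\,dm(g)$. As $z$ is the minimizer, the nonnegative essentially bounded function $h(g)\defeq \varphi(g^{-1}z)-\varphi(z)$ satisfies $\int_G h\,dm=0$, so $m(\{g:h(g)<\eta\})=1$ for every $\eta>0$ by the Chebyshev estimate. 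Feeding $a=z$, $b=g^{-1}z$ into the strong convexity estimate and using $\varphi(\mathrm{mid}(z,g^{-1}z))\ge\varphi(z)$ gives $d(z,g^{-1}z)^2\le 2h(g)$, and since $d(z,g^{-1}z)=d(z,gz)$ we conclude $m(\{g:d(z,gz)<\delta\})=1$ for every $\delta>0$; therefore $\psi(z)=0$.

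Finally I would globalize using minimality. The set $\{x\in X:\psi(x)=0\}$ is closed (continuity of $\psi$), convex (since $\{\psi\le 0\}=\{\psi=0\}$), $G$-invariant ($G$-invariance of $\psi$), and nonempty (it contains $z$); minimality of the action forces it to be all of $X$. Thus $\psi\equiv 0$, which by the first paragraph is exactly the desired conclusion at every $x$. I expect the main obstacle to be the third paragraph: justifying the idempotency identity and the ``zero integral'' deductions while integrating against a merely finitely additive mean, which requires checking that every integrand in sight is essentially bounded (controlled uniformly through $C$), so that the truncation to a bounded function does not affect the relevant integrals and the convolution identity $m\ast m=m$ applies verbatim.
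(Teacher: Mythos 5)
Your proof is correct and follows essentially the same route as the paper's: the auxiliary function $\varphi(w)=\int_G d(w,gx_0)^2\,dm(g)$, strong convexity from the \cat inequality, a unique minimizer $z$, the idempotence identity $\varphi(z)=\int_G\varphi(g^{-1}z)\,dm(g)$ yielding $m$-a.e.\ small displacement of $z$, and then minimality to globalize via a closed convex $G$-invariant set. The only differences are cosmetic: you package the conclusion through $\psi(x)=\int_G d(x,gx)^2\,dm(g)$, you get the minimizer from a minimizing-sequence/Cauchy argument instead of the paper's reduction to a bounded ball plus a cited theorem, and you are in fact more careful than the paper about truncating the essentially bounded integrands so that the convolution identity applies.
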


\begin{proof}Let us fix $x_0\in X$.
 For ${g}\in{G}$ such that $\Fix({g})\neq\emptyset$ and $x\in X$,
	$$d(x,{g} x_0)\leq d(x,\Fix({g}))+d(\Fix({g}),x_0)$$
	and thus the following formula
	$$\varphi(x)=\int_{G} d(x,{g} x_0)^2dm({g})$$
	defines a  function $\varphi\colon X\to \Rb^+$. The continuity follows from the inequality
	\begin{align*} \left| d(x,{g} x_0)^2-d(y,{g} x_0)^2\right|&= \left| d(x,{g} x_0)-d(y,{g} x_0)\right|\cdot\left(d(x,{g} x_0)+d(y,{g} x_0)\right) \\
	&\leq d(x,y)\left( 2d(x,{g} x_0)+d(x,y)\right)
	\end{align*}
	which integrates into
	\begin{equation*}
		\left|\varphi(x)-\varphi(y)\right|\leq d(x,y)\left(2\varphi(x)+d(x,y)\right).
	\end{equation*}
It follows from linearity and the \cat inequality that for any $x,y\in X$ and $c$ their midpoint that
	\begin{equation}\label{eq:strctconv}\varphi(c)\leq \frac{1}{2}\left(\varphi(x)+\varphi(y)\right)-\frac{1}{4}d(x,y)^2.\end{equation}
We claim that $\varphi$ has a unique minimum $z$ and for any $z'\in X$, $$d(z,z')^2\leq 2(\varphi(z')-\varphi(z)).$$ Only the existence of this minimum requires an argument, the inequality and the uniqueness follow directly from Equation \eqref{eq:strctconv}. So let us prove the existence. For ${g}$ in a set of measure 1, $d(x_0,\Fix({g}))\leq C$ and thus for any ${g}$ in this set, $d(x_0,{g} x_0)\leq 2C$. In particular, $\varphi(x_0)\leq 4C^2$. Now, for a point $y\in X\setminus B(x_0,4C)$, the reverse triangle inequality, implies that $d(y,{g} x_0)\geq 2C$ and thus $\varphi(y)\geq 4C^2$. Since any continuous convex function on a bounded complete \cat space has a minimum (this follows from \cite[Theorem 14]{MR2219304}), $\varphi$ has a minimum which lies in $B(x_0,4C)$.

By idempotence of $m$ we have,
\begin{align*}\varphi(z)&= \int_{G} d(z,{g} x_0)^2dm\ast m({g})\\
&=\int_{h\in{G}}\int_{k\in {G}}d(h^{-1}z,kx_0)^2dm(k)dm(h)	\\
&=\int_{h\in{G}}\varphi(h^{-1}z)dm(h).
\end{align*}

In particular, for any $\varepsilon>0$ there is a measure 1 set of elements $h\in{G}$ such that $\varphi(h^{-1}z)<\varphi(z)+\varepsilon$ and thus $d(z,h^{-1}z)^2\leq2\varepsilon$. This gives that $\{x\in X\mid \forall \varepsilon >0, m(\{{g}\mid d(x,{g} x)<\varepsilon\})=1\}$ is nonempty. Since this set is closed convex and ${G}$-invariant, it is $X$ itself.
\end{proof}

\begin{thm}\label{thm:fix_ccc}
Let ${G}$ be a group acting essentially and non-elementary on an irreducible \cat cube complex $X$.
There is a nonempty closed convex subspace $X_0$ such that for any conjugation invariant mean $m$ on ${G}$ and any $x\in X_0$, $m({G}_x)=1$.
\end{thm}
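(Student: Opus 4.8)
The plan is to prove the theorem first for \emph{idempotent} conjugation invariant means and then reduce the general case to this one. The reason for singling out idempotent means is that Lemma~\ref{prop:almost-fix}, the geometric heart of the argument, requires idempotency; once it applies, the conclusion follows from discreteness of $G$-orbits. Throughout, fix a minimal nonempty $G$-invariant closed convex subspace $X_0\subseteq X$. Such a subspace exists: since the action is non-elementary there is no fixed point at infinity, so by finite telescopic dimension (cf.\ Proposition~\ref{prop:cp} and the proof of Corollary~\ref{ell}) any nested family of $G$-invariant closed convex subspaces has nonempty intersection, and Zorn's lemma produces a minimal one. Let $\pi\colon X\to X_0$ be the nearest-point projection, which is $1$-Lipschitz and $G$-equivariant since $X_0$ is $G$-invariant. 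We will show that this single $X_0$ works for every conjugation invariant mean.

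For the reduction, recall that the set $\mathcal{M}$ of conjugation invariant means is a weak${}^*$-compact convex subset of $\ell^\infty(G)^*$ closed under convolution, so $(\mathcal{M},\ast)$ is a semigroup. For fixed $m$ the right translation $\rho_m\colon n\mapsto n\ast m$ is weak${}^*$-continuous and affine, so by the Schauder--Tychonoff fixed point theorem its fixed-point set $F\defeq\{n\in\mathcal{M}\mid n\ast m=n\}$ is nonempty; moreover $F$ is weak${}^*$-closed and is a subsemigroup, since $n_1\ast m=n_1$ and $n_2\ast m=n_2$ give $(n_1\ast n_2)\ast m=n_1\ast(n_2\ast m)=n_1\ast n_2$. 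Being a compact right-topological semigroup, $F$ contains an idempotent $n$ by Ellis's lemma. This $n$ is a conjugation invariant idempotent mean with $n\ast m=n$. Granting the idempotent case, we will have $n(G_x)=1$ for every $x\in X_0$, and then Lemma~\ref{lem:stationary}, applied with $H=G_x$ and the relation $n\ast m=n$, yields $m(G_x)=1$. Hence it suffices to treat idempotent means.

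So let $n$ be a conjugation invariant idempotent mean. By Corollary~\ref{ell}, $n(\Ell(G))=1$, and by Proposition~\ref{ccc} together with Lemma~\ref{lem:bounded} there are $x_1\in X$ and $C>0$ such that the set $X_C$ of points $x$ with $n(\{g\mid \Fix(g)\cap B(x,C)\neq\emptyset\})=1$ is convex, $G$-invariant, and contains $x_1$. The key is to transport this onto $X_0$. If $g$ fixes some $p\in B(x_1,C)$, then by equivariance $g\pi(p)=\pi(gp)=\pi(p)$, so $\pi(p)\in\Fix(g)\cap X_0$ and $d(\pi(x_1),\pi(p))\leq d(x_1,p)\leq C$; hence $\pi(x_1)\in X_C\cap X_0$. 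The set $X_C\cap X_0$ is convex and $G$-invariant, so by minimality of $X_0$ its closure is all of $X_0$, and after enlarging $C$ slightly we may assume that the defining condition of $X_C$ holds at every point of $X_0$. We may therefore apply Lemma~\ref{prop:almost-fix} to the minimal action $G\curvearrowright X_0$ and conclude that $n(\{g\mid d(x,gx)<\varepsilon\})=1$ for every $x\in X_0$ and every $\varepsilon>0$.

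Finally, we upgrade this to $n(G_x)=1$ using discreteness of $G$-orbits: for each $x\in X_0$ there is $\varepsilon_x>0$ such that any automorphism $g$ with $d(x,gx)<\varepsilon_x$ must fix $x$, since an automorphism moving $x$ either sends the carrier cube of $x$ off itself---moving $x$ a definite distance, the angles between distinct cubes at a point being at least $\pi/2$---or else acts on that carrier by one of the finitely many cube symmetries, each of which displaces $x$ by a fixed positive amount unless it fixes $x$. Thus $\{g\mid d(x,gx)<\varepsilon_x\}=G_x$ and $n(G_x)=1$, which finishes the idempotent case and hence the theorem. The main obstacle is precisely the passage to idempotent means: this is what makes Lemma~\ref{prop:almost-fix} available, and it is here that the weak${}^*$-continuity of right convolution, Ellis's lemma, and the stationarity Lemma~\ref{lem:stationary} must be combined; the remaining work is the bookkeeping needed to carry the fixed-ball statement of Lemma~\ref{lem:bounded} onto the fixed minimal subspace $X_0$ through the equivariant projection $\pi$.
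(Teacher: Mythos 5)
Your proof is correct, and it follows the paper's overall strategy quite closely---the reduction to idempotent means via a fixed-point theorem, Ellis's lemma and Lemma~\ref{lem:stationary}, then Corollary~\ref{ell}, Proposition~\ref{ccc}, Lemma~\ref{lem:bounded}, Lemma~\ref{prop:almost-fix}, and discreteness of $\Aut(X)$-orbits---but it diverges at one genuine point: how to guarantee that a single, mean-independent subspace $X_0$ works for every mean. The paper fixes a minimal invariant subspace $X_0$ and, for each idempotent mean $n$, applies Lemma~\ref{prop:almost-fix} to a minimal invariant subspace $X_1\subseteq\overline{X_C}$ (which depends on $n$); it then transfers the conclusion from $X_1$ to $X_0$ by invoking the Caprace--Monod structure theorem \cite[Theorem 4.3 (B.ii)]{MR2574740}, which says that the union of all minimal invariant subspaces splits as $X_0\times C$ with trivial action on the second factor, so that all minimal subspaces are $G$-equivariantly isometric and point stabilizers match. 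You avoid this external input entirely: the $G$-equivariant, $1$-Lipschitz nearest-point projection $\pi\colon X\to X_0$ sends fixed points of $g$ to fixed points of $g$, so the fixed-ball condition of Lemma~\ref{lem:bounded} at $x_1$ is transported to $\pi(x_1)\in X_0$ with the fixed points lying in $X_0$; minimality of $X_0$ together with your (correct) closure-and-enlarge-$C$ observation then places every point of $X_0$ in $X_{C+1}$, and Lemma~\ref{prop:almost-fix} applies to the minimal action $G\curvearrowright X_0$ directly. This is a nice simplification: it is more self-contained and makes the mean-independence of $X_0$ transparent, at the cost of a small bookkeeping point you should make explicit---for an arbitrary $x\in X_0$ the defining condition of $X_{C+1}$ only provides fixed points in $X$, but projecting once more puts them in $X_0\cap B(x,C+1)$, which is what the application of Lemma~\ref{prop:almost-fix} to the space $X_0$ formally requires (and is also, implicitly, all that its proof uses). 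Everything else---the subsemigroup/Ellis argument, the stationarity step, and the discreteness claim the paper asserts without proof---matches the paper.
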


\begin{proof}
Since the action is non-elementary, there are minimal invariant closed convex subspaces. Moreover the union of all such minimal subspaces split as a product $X_0\times C$ \cite[Theorem 4.3 (B.ii)]{MR2574740} where $X_0$ is one of these minimal subspaces and the action is diagonal, being trivial on $C$. So, all minimal subspaces are equivariantly isometric. Let us fix a minimal closed subspace $X_0$.

 Assume first that $n$ is a conjugation invariant mean on ${G}$ that is additionally idempotent. Then we can apply Lemma \ref{prop:almost-fix} to some minimal closed convex ${G}$-invariant subspace $X_1$ of $\overline{X_C}$ given by Lemma \ref{lem:bounded}. Since the orbit of any point in $X$ is discrete under the action of $\Aut(X)$, we conclude that $n({G}_x)=1$ for every $x\in X_1$. Since $X_1$ and $X_0$ are equivariantly isometric, for every  $x\in X_0$, $n({G}_x)=1$.


Now let $m$ be a conjugation invariant mean on ${G}$. Since the map $n\mapsto n\ast m$ is affine, continuous, and the set of conjugation invariant means is a convex compact subspace of a locally convex topological vector space, the Markov-Kakutani fixed point theorem gives the existence of an \textbf{$m$-stationary} conjugation invariant mean, i.e., a mean $n$ satisfying the equation $n\ast m=n$. The set of all such $m$-stationary conjugation invariant means is a compact left topological semigroup for convolution, hence Ellis's Lemma \cite[Lemma 1]{MR0101283} gives the existence of an $m$-stationary conjugation invariant mean $n$ which is furthermore idempotent. By the paragraph above, we have $n({G} _x)=1$ for every $x\in X_0$. Therefore, by Lemma \ref{lem:stationary}, since $n$ is $m$-stationary, we conclude that $m({G}_x)=1$ for every $x\in X_0$.
	\end{proof}

\subsection{A few applications}


\begin{cor}\label{cor:non_irr_CC}Let ${G}$ be a group acting on a finite dimensional \cat cube complex $X$ with no fixed points and no finite orbit in $\partial X$. Then there exists a finite index subgroup ${G}_0$ and a closed convex ${G}_0$-invariant subspace $X_0$ such that for any conjugation invariant mean $m$ on ${G}_0$ and for any point $x\in X_0$, $m(({G}_0)_x)=1$.\end{cor}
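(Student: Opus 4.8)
The goal is to reduce the general (possibly reducible, possibly non-essential) action to the irreducible essential situation already handled by Theorem~\ref{thm:fix_ccc}, at the cost of passing to a finite index subgroup. The plan is to first replace $X$ by its \textbf{essential core}. Since $G$ has no fixed point and no finite orbit in $\partial X$, there is a nonempty $G$-invariant closed convex subspace on which $G$ acts without a fixed point at infinity (using \cite[Proposition 1.8]{MR2558883} or the Caprace--Sageev structure theory), and one may pass to the essential core $Y$ of this subspace so that the induced action of $G$ on $Y$ is essential. The essential core is itself a finite dimensional \cat cube complex, and the hypotheses (no fixed point, no finite orbit at infinity) are inherited. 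I would record that a conjugation invariant mean giving full measure to stabilizers in $Y$ does the same for the corresponding stabilizers in $X$, since point stabilizers only get larger under the embedding $Y\hookrightarrow X$.

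Next I would handle reducibility. A finite dimensional essential \cat cube complex decomposes canonically as a product $Y = Y_1 \times \cdots \times Y_n$ of irreducible factors, and by the canonicity of this decomposition the group $\Aut(Y)$ permutes the factors. Restricting to the finite index subgroup $G_0 \leq G$ that preserves each factor, $G_0$ acts on each irreducible factor $Y_i$ by the coordinate projection. The action of $G_0$ on each $Y_i$ is essential (essentiality is preserved under taking factors of a product decomposition) and, after discarding the finitely many factors on which $G_0$ acts elementarily or with a bounded orbit, the remaining factors carry non-elementary essential actions to which Theorem~\ref{thm:fix_ccc} applies. The main point to verify here is that no factor can be problematic in a way that survives: if $G_0$ acted elementarily on some $Y_i$ with $\dim Y_i\geq 1$, this would produce a finite invariant subset of $\partial Y_i$, hence of $\partial X$, contradicting the hypothesis; so I would argue that after passing to a further finite index subgroup each nontrivial factor is genuinely non-elementary.

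For each non-elementary irreducible essential factor $Y_i$, Theorem~\ref{thm:fix_ccc} supplies a nonempty closed convex $((G_0)\text{-invariant})$ subspace $(X_0)_i \subseteq Y_i$ such that $m((G_0)_{y_i}) = 1$ for every $G_0$-conjugation invariant mean $m$ and every $y_i \in (X_0)_i$, where $(G_0)_{y_i}$ is the stabilizer for the action on $Y_i$. I would then set $X_0 \defeq \prod_i (X_0)_i$ (taking the full factor $Y_j$ in the trivial/bounded coordinates, or just a single orbit point there), which is a nonempty closed convex $G_0$-invariant subspace of $Y$ and hence of $X$. For a point $x = (y_1,\dots,y_n) \in X_0$, the stabilizer $(G_0)_x = \bigcap_i (G_0)_{y_i}$ is a finite intersection of stabilizers. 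The finitely many applications of Theorem~\ref{thm:fix_ccc} each give $m((G_0)_{y_i})=1$, and finite intersections of $m$-conull sets are $m$-conull, so $m((G_0)_x)=1$.

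The main obstacle is bookkeeping at the two reduction steps rather than any deep new idea: I must ensure that after passing to the finite index subgroup $G_0$ preserving the factor decomposition, \emph{every} nontrivial irreducible factor carries a non-elementary essential action, so that Theorem~\ref{thm:fix_ccc} genuinely applies factorwise. The delicate point is ruling out factors on which $G_0$ acts elementarily; this is where I would lean on the hypothesis that $G$ has no finite orbit in $\partial X$, translating an invariant finite subset of the boundary of a factor into a contradiction, possibly after absorbing the trivial factors into the complement $C$ in the splitting $Y = X_0 \times C$ as in the proof of Theorem~\ref{thm:fix_ccc}. One must also confirm that passing from $G$ to the finite index subgroup $G_0$ is harmless for the statement, which it is, since the conclusion is explicitly stated for $G_0$ and its conjugation invariant means.
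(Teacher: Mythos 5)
Your proposal is correct and follows essentially the same route as the paper: pass to the essential core, take the canonical decomposition into irreducible factors, restrict to the finite index subgroup $G_0$ preserving the factors, rule out elementary factor actions via the no-finite-orbit-at-infinity hypothesis (using that a finite $G_0$-invariant set at infinity yields a finite $G$-orbit since $[G:G_0]<\infty$), and then apply Theorem~\ref{thm:fix_ccc} factorwise, taking the product of the resulting subspaces and intersecting finitely many $m$-conull stabilizers. The only cosmetic difference is that the paper first eliminates pseudo-Euclidean factors separately (via $\Aut$-invariant lines and \cite[Lemma 7.1]{MR2827012}), a step your direct non-elementarity argument subsumes.
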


\begin{proof} Since ${G}$ has no fixed point in $X$, nor any fixed point at infinity, there is a nonempty invariant subcomplex called the essential core, $Y\subset X$ on which ${G}$ acts essentially \cite[Proposition 3.5]{MR2827012}. This complex $Y$ has a canonical splitting $Y=Y_1\times\dots\times Y_n$ which is preserved by $\Aut(Y)$ \cite[Proposition 2.6]{MR2827012}. Let us gather the pseudo-Euclidean factors as the $k$ first ones. That is $Y_i$ is pseudo-Euclidean if and only if $i\leq k$. In particular, if we denote by $Y_{\textrm{euc}}$ the product $Y_1\times\dots\times Y_k$, then $Y_{\textrm{euc}}$ is pseudo-Euclidean and the action $\Aut(Y_{\textrm{euc}})\acts Y_{\textrm{euc}}$ is essential. Let us also denote by $Y_\textrm{non-euc}$ the product of the remaining factors. Observe that the splitting $Y=Y_{\textrm{euc}}\times Y_\textrm{non-euc}$ is $\Aut(Y)$-invariant and this gives an action of $\Aut(Y)$ on $Y_{\textrm{euc}}$. By \cite[Lemma 7.1]{MR2827012}, each factor $Y_i$, for $i\leq k$, is $\Rb$-like. In particular, each factor has a $\Aut(Y_i)$-invariant line which gives an $\Aut(Y_i)$-invariant pair of points in $\partial Y_i$. So this gives a finite orbit in $\partial Y_{\textrm{euc}}$ for $\Aut(Y)$ and thus for ${G}$ as well. This is a contradiction and this implies that $Y$ has no pseudo-Euclidean factor.

Let ${G}_0$ be the finite index subgroup of ${G}$ that preserves each factor of the decomposition  $Y=Y_1\times\dots\times Y_n$. For any $i\leq k$, the action ${G}_0\acts Y_i$ is essential and non-elementary (otherwise there would be a finite ${G}$-orbit in $\partial Y$). By Theorem~\ref{thm:fix_ccc}, there is a closed convex subspace $X_i\subset Y_i$ such that the stabilizer of any point in $X_i$ has measure 1 for any conjugation invariant mean $m$. Let us denote by $X_0=X_1\times\dots\times X_n\subset Y$. By intersecting finitely measure 1 sets, it follows that the stabilizer of any point in $_0$ has measure 1.
\end{proof}


\begin{cor}\label{cor:CC_non-inner-amen}
Let  ${G}$ be a group acting on some finite dimensional \cat cube complex properly and without finite orbit at infinity. Then ${G}$ is not inner amenable.
\end{cor}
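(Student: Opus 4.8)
The plan is to derive a contradiction from the assumption that $G$ is inner amenable, by locating a point whose stabilizer is forced to be both $m$-conull and trivial. First I would invoke Corollary \ref{cor:non_irr_CC}: since $G$ acts on the finite dimensional \cat cube complex $X$ with no finite orbit at infinity, the hypotheses of that corollary are met (properness in particular rules out a global fixed point, as a proper action with a fixed vertex would force $G$ to be finite, and one checks there is no fixed point at infinity because that would give a finite orbit in $\partial X$). Passing to the finite index subgroup $G_0$ supplied by Corollary \ref{cor:non_irr_CC}, I obtain a closed convex $G_0$-invariant subspace $X_0$ such that $m((G_0)_x)=1$ for every point $x\in X_0$ and every conjugation invariant mean $m$ on $G_0$.

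The key observation is then that properness of the action makes every point stabilizer $(G_0)_x$ \emph{finite}. Indeed, since the $G$-action on $X$ is proper, so is the restricted $G_0$-action, and hence the stabilizer of any point of $X$ (in particular any $x\in X_0$) is a finite subgroup of $G_0$. Now suppose toward a contradiction that $G_0$ is inner amenable. Then there is an atomless conjugation invariant mean $m$ on $G_0$. Fixing any $x\in X_0$, Corollary \ref{cor:non_irr_CC} gives $m((G_0)_x)=1$; but $(G_0)_x$ is finite, so its indicator is a finite set on which the atomless mean $m$ must give measure zero, contradicting $m((G_0)_x)=1$. Therefore $G_0$ is not inner amenable.

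Finally I would transfer non-inner-amenability from $G_0$ back to $G$. Since $G_0$ has finite index in $G$, Proposition \ref{prop:almostisom}(1) (the theorem of Giordano--de la Harpe) asserts that $G$ is inner amenable if and only if $G_0$ is. As $G_0$ is not inner amenable, neither is $G$, which completes the proof.

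I expect the only genuinely delicate point to be the verification that the hypotheses of Corollary \ref{cor:non_irr_CC} are actually satisfied, namely that a proper action without finite orbit at infinity has no global fixed point in $X$: a proper action cannot fix a vertex unless $G$ is finite (in which case $G$ is trivially not inner amenable under our conventions, and can be handled separately or excluded), and the absence of finite orbits in $\partial X$ is assumed outright. Everything after that reduction is a short combination of properness (giving finite stabilizers), atomlessness of the witnessing mean, and the finite-index invariance of inner amenability.
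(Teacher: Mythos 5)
Your proposal is correct and follows essentially the same route as the paper: invoke Corollary~\ref{cor:non_irr_CC} to get a finite index subgroup $G_0$ and a subspace $X_0$ whose point stabilizers are $m$-conull, note that properness forces these stabilizers to be finite, derive a contradiction with atomlessness, and transfer between $G$ and $G_0$ via the finite-index invariance of inner amenability (Proposition~\ref{prop:almostisom}(1)). Your explicit verification that a proper action cannot have a global fixed point unless $G$ is finite (a case excluded by atomlessness) is a small point the paper leaves implicit, but it does not change the argument.
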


\begin{proof}\label{cor:proper} Let us assume toward a contradiction that ${G}$ is inner amenable.  We continue with the same notations as in Corollary \ref{cor:non_irr_CC}. Say that $m$ is a conjugation invariant atomless mean on ${G}_0$ (which exists since it has finite index in ${G}$). By Corollary \ref{cor:non_irr_CC}, the stabilizer of any point in $X_0$ has measure 1. However, that the action is proper ensures the stabilizer of any vertex is finite. Therefore  the mean $m$ has atoms, which is a contradiction.
\end{proof}

\begin{rmk}\label{rmk:DGO} Corollary~\ref{cor:proper} can also be deduced from previous results. From \cite[Theorem 1.3]{sisto2016contracting} (see also \cite[2.23]{MR3589159}), one can deduce the existence of non-degenerate hyperbolically embedded subgroup of ${G}$ and thus the group ${G}$ is not inner amenable \cite[2.35]{MR3589159}.
\end{rmk}

\begin{rmk}\label{rmk:BIP}Under the hypotheses of Corollary~\ref{cor:proper}, we can also show that ${G}$ has a natural proper $1$-cocycle into a non-amenable representation and thus ${G}$ is properly proximal in the sense of \cite{BIP18} and hence not inner amenable. There is a well-known natural $1$-cocycle for the quasi-regular representation of ${G}$ on $\ell^2(\mathcal{H})$, associated to the action of ${G}$ on the set of halfspaces $\mathcal{H}$, and this cocycle is proper if the action of ${G}$ on $X$ is proper \cite[\S1.2.7]{MR1852148}. It remains to show that this representation is non-amenable, which (since we are dealing with a quasi-regular representation) is equivalent to showing that there is no ${G}$-invariant mean on $\mathcal{H}$.

Assume for the sake of contradiction that there is such a ${G}$-invariant mean $m$ on $\mathcal{H}$. We can push it forward via $\mathfrak{h}\mapsto \widehat{\mathfrak{h}}$ to get a ${G}$-invariant mean $m$ on the set of hyperplanes $\mathcal{W}$. Thanks to the same argument as in the proof of Corollary \ref{cor:non_irr_CC}, it suffices to consider the case where $X$ is irreducible and the action is essential. Analogous to the proof of Proposition~\ref{prop:almost-fix}, let us define

$$\mathcal{W}_1=\{\widehat{\mathfrak{h}}\mid m(\{\widehat{\mathfrak{k}} \mid \widehat{\mathfrak{k}} \bot \widehat{\mathfrak{h}}\})=1\},$$
$$\mathcal{W}_2=\{\widehat{\mathfrak{h}}\mid m(\{\widehat{\mathfrak{k}} \mid \widehat{\mathfrak{k}}\subset \mathfrak{h} \} )>0\ \textrm{or}\ m(\{\widehat{\mathfrak{k}}\mid \widehat{\mathfrak{k}}\subset \mathfrak{h}^\ast \} )>0\}.$$

For similar reasons as in Proposition~\ref{prop:almost-fix}, $\mathcal{W}=\mathcal{W}_1\sqcup\mathcal{W}_2$ and these collections are transverse. By irreducibility, one is trivial and arguing with $\mathcal{W}_2$ as in the proof of Proposition~\ref{prop:almost-fix}, we show that $\mathcal{W}=\mathcal{W}_1$. Thus, for any two $\widehat{\mathfrak{h}_1}, \widehat{\mathfrak{h}_2}$, there is a measure 1 set of hyperplanes which simultaneously cross them both. This contradicts the existence of strongly separated hyperplanes which is guaranteed thanks to the irreducibility of the complex \cite[Proposition 5.1]{MR2827012}.
\end{rmk}

\begin{exs}The Higman group is not inner amenable.
 Let us recall that the Higman group $H$  is the group given by the presentation.
\[H=\langle a_0, a_1,a_2,a_3\mid  a_ia_{i+1}a_i^{-1}=a_{i+1}^2\text{ with } i\in\Zb/4\Zb\rangle.\]

The work \cite{MR3619304} exhibits an action of $H$ on an irreducible \cat square complex. From the description of the action, it follows that the convex hull of any orbit meets the interior of some square and the action is essential and non-elementary. Moreover, the action on squares is regular So , by Theorem~\ref{thm:fix_ccc}, for any conjugacy invariant mean on $H$ $m(\{1\})=1$, so $H$ is not inner amenable.
\end{exs}

\subsection{Graph products of groups}
Let $\Gamma$ be a  finite simplicial graph with vertex set $V\Gamma$ and edge set $E\Gamma$. The neighborhood $N(v)$ of $v\in V\Gamma$ is the set $\{w\in V\Gamma, \ w=v\ \textrm{or}\ \{v,w\}\in E\Gamma\}$. A \textbf{clique} in $\Gamma$ is subset $C\subset V\Gamma$ such that the induced graph is complete. By a \textbf{maximal clique}, we mean a clique which is maximal for inclusion. The flag simplicial complex $F\Gamma$ associated to $\Gamma$ is the simplicial complex with 1-skeleton $\Gamma$ and simplices corresponding to cliques.

 Assume that for each $v\in V\Gamma$, a non-trivial group $G_v$ is given. The groups $G_v$ are called the \textbf{vertex groups}.  For any simplex $\sigma$ of $F\Gamma$, that is a clique in $\Gamma$, we define $G_\sigma=\prod_{v\in\sigma}G_v$. In particular, for two simplices $\tau\subset \sigma$, we have the natural inclusion $\psi_{\sigma\tau}\colon G_\tau\to G_\sigma$. The \textbf{graph product} $G_\Gamma$ is the direct limit of the system given by the groups $G_\sigma$ and homomorphisms $\psi_{\sigma\tau}$. It can also been described as the quotient of the free product of all vertex groups by the normal subgroup generated by the commutators $[a,b]$ with $a\in G_v, b\in G_w$ and $\{v,w\}\in E\Gamma$.

 More generally, if $S$ is a subset of $V\Gamma$, we denote by $\Gamma_S$ the subgraph of $\Gamma$ induced by $S$. The subgroup of $G_\Gamma$ generated by $\{G_v\}_{v\in S}$ is denoted by $G_{S}$ and is isomorphic to the graph product $G_{\Gamma_S}$.

 The graph $\Gamma$ is a $\textbf{join}$ if there are two proper subsets $V_1,V_2\subset V\Gamma$, $V\Gamma=V_1\sqcup V_2$ and such that for any $v_1\in V_1$ and  $v_2\in V_2$, $\{v_1,v_2\}\in E\Gamma$. In this case, if $\Gamma_i$ is the  graph induced by $V_i$ then the graph product $G_\Gamma$ splits as the direct product $G_{\Gamma_1}\times G_{\Gamma_2}$. The \textbf{complement graph} $\overline{\Gamma}$ is the graph with same vertex set $V\overline{\Gamma}=V\Gamma$ and $\{v,w\}\in E\overline{\Gamma}$ if and only if $\{v,w\}\notin E\Gamma$. Let $\Gamma_1,\dots,\Gamma_n$ be the subgraphs of $\Gamma$ induced by the vertex sets of the connected components of $\overline{\Gamma}$. By the above remark, the group $G_\Gamma$ splits as a direct product $G_{\Gamma_1}\times\dots\times G_{\Gamma_n}$. This canonical splitting is maximal in the sense that no $\Gamma_i$ is a join.

The goal of this subsection is to prove the following characterization of inner amenability of graph products of groups and to specialize this result to the cases of right-angled Artin groups and right-angled Coxeter groups.

\begin{thm}\label{thm:graph_prod} The graph product $G_\Gamma$ is inner amenable if and only if
\begin{itemize}
\item there is $v\in V\Gamma$ such that $N(v)=V\Gamma$ and $G_v$ is inner amenable or
\item there are $v_1,v_2\in V$ such that $N(v_1)=V\setminus\{v_2\}$, $N(v_2)=V\setminus\{v_1\}$ and $G_{v_1}\simeq G_{v_2}\simeq\Zb/2\Zb$. In particular $G_\Gamma$ splits as direct product with the infinite dihedral group $D_\infty$.
\end{itemize}
\end{thm}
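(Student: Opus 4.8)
The plan is to combine the direct-product structure of graph products with the cube-complex fixed-point theorem (Corollary~\ref{cor:non_irr_CC}), reducing everything to the factors indexed by the connected components of $\overline{\Gamma}$. Recall from the discussion preceding the statement that $G_\Gamma = G_{\Gamma_1}\times\dots\times G_{\Gamma_n}$, where $\Gamma_1,\dots,\Gamma_n$ are the full subgraphs on the vertex sets of the connected components of $\overline{\Gamma}$, and that no $\Gamma_i$ is a join. The first step is the observation that a direct product $A\times B$ is inner amenable if and only if $A$ or $B$ is: the backward direction is immediate (push an atomless $A$-conjugation invariant mean forward along $A\hookrightarrow A\times B$, using that $B$ centralizes $A$), while the forward direction follows from Proposition~\ref{prop:normvsquot} applied to $A\trianglelefteq A\times B$, since a $G$-conjugation invariant mean concentrated on the direct factor $A$ is the same thing as an $A$-conjugation invariant mean. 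Iterating, $G_\Gamma$ is inner amenable if and only if some $G_{\Gamma_i}$ is, so it suffices to decide inner amenability of each factor and match the outcomes with conditions (1) and (2).

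I would then separate cases by $|V\Gamma_i|$. If $\Gamma_i$ is a single vertex $v$, then $G_{\Gamma_i}=G_v$, which is inner amenable exactly when $G_v$ is; since $N(v)=V\Gamma$ is equivalent to $v$ being an isolated vertex of $\overline{\Gamma}$, this is alternative (1). If $\Gamma_i$ has exactly two vertices $v_1,v_2$ (necessarily non-adjacent, as $\overline{\Gamma_i}$ is a single edge), then $G_{\Gamma_i}=G_{v_1}\ast G_{v_2}$ is a free product, i.e.\ the degenerate amalgam with trivial amalgamated subgroup. By Theorem~\ref{thm:amalgam} such a free product is inner amenable if and only if it is degenerate, that is $|G_{v_1}|=|G_{v_2}|=2$, in which case $G_{\Gamma_i}\cong D_\infty$; and $D_\infty$ is inner amenable because it contains $\Zb$ with finite index, so Proposition~\ref{prop:almostisom} applies. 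Since $\{v_1,v_2\}$ being an isolated edge of $\overline{\Gamma}$ means $N(v_1)=V\setminus\{v_2\}$ and $N(v_2)=V\setminus\{v_1\}$, this is alternative (2). Together with the direct-product reduction, these two cases already give the backward implication of the theorem and reduce the forward implication to showing that all the remaining factors fail to be inner amenable.

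The remaining, and main, case is that $\Gamma_i$ is not a join and has at least three vertices (equivalently, $\overline{\Gamma_i}$ is connected with $\ge 3$ vertices). Here I would let $G_{\Gamma_i}$ act on its associated finite-dimensional \cat cube complex (the Davis complex, of dimension the clique number of $\Gamma_i$). Since $\Gamma_i$ is not a join the complex is irreducible, and since $G_{\Gamma_i}$ is not virtually cyclic one checks that the action has no global fixed point and no finite orbit in $\partial X$; thus Corollary~\ref{cor:non_irr_CC} supplies a finite-index subgroup $G_0\le G_{\Gamma_i}$ and a nonempty $G_0$-invariant convex subspace $X_0$ with $m((G_0)_x)=1$ for every $x\in X_0$ and every conjugation invariant mean $m$ on $G_0$. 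Because inner amenability is a finite-index invariant (Proposition~\ref{prop:almostisom}), it suffices to show that $G_0$ is not inner amenable, i.e.\ that every such $m$ must have an atom.

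The crux, and the step I expect to be the main obstacle, is to exhibit \emph{finitely many} vertices $x_1,\dots,x_k\in X_0$ whose stabilizers have finite intersection. Because means are only finitely additive we obtain $m\big((G_0)_{x_1}\cap\dots\cap(G_0)_{x_k}\big)=1$, so once this intersection is finite the atomless mean $m$ would charge a point, a contradiction. In the Davis complex the vertex stabilizers $G_x$ are conjugates of the clique (``parabolic'') subgroups $G_\sigma$, and by the standard fact that an intersection of parabolic subgroups of a graph product is again parabolic, any finite intersection of such stabilizers is conjugate to some $G_\tau$. The problem thus becomes geometric: choose the $x_j$ so that the associated clique $\tau$ is empty. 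This is exactly where irreducibility (the hypothesis that $\Gamma_i$ is not a join) is used, since it prevents any single vertex of $\Gamma_i$ from lying in every relevant clique; spreading the $x_j$ out across $X_0$ and using essentiality to push the dual half-spaces off one another, as in Proposition~\ref{ccc}, should drive the common parabolic down to the trivial group. Carrying out this selection, together with the essentiality and non-elementarity bookkeeping for the Davis complex, is the technical heart of the argument, after which the equivalence with conditions (1) and (2) is complete.
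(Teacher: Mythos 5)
Your overall architecture is the same as the paper's: split $G_\Gamma$ along the connected components of $\overline{\Gamma}$ using Proposition~\ref{prop:normvsquot}, dispose of the one-vertex factors (alternative (1)) and the two-vertex factors (alternative (2)), and kill the remaining non-join factors by letting them act on the Meier--Davis cube complex and invoking the fixed-point theorem to force an atom. Your treatment of the two-vertex case is a legitimate variant: you quote Theorem~\ref{thm:amalgam} for the free product $G_{v_1}\ast G_{v_2}$ and Proposition~\ref{prop:almostisom} for $D_\infty$, whereas the paper absorbs this case into the cube-complex argument (there $X_\Gamma$ is a tree). That difference is fine. The problem is that in the main case ($\Gamma_i$ not a join, at least three vertices) your proposal has two genuine gaps, one of which is exactly the step you defer.

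First, the crux. You ask for finitely many points of $X_0$ whose stabilizers have finite intersection, propose to find them by ``spreading the $x_j$ out across $X_0$'' using essentiality and a parabolic-intersection theorem, and explicitly leave this unproved (``should drive the common parabolic down to the trivial group''). This is where the proof actually lives, and the geometric selection you envision is both missing and harder than what is needed. The paper's argument is short and algebraic: since $X_0$ is $G_\Gamma$-invariant and $m$ is conjugation invariant, a \emph{single} point suffices. Translate $x\in X_0$ so that it lies in a cube containing the vertex $\emptyset$; its stabilizer is then the clique subgroup $G_{\sigma(x)}$. Because $\Gamma_i$ is not a join, every $v\in\sigma(x)$ has a non-neighbour $v'$, so one may pick nontrivial $g_v\in G_{v'}$ and set $\gamma=g_{v_1}\cdots g_{v_n}$; normal forms in the graph product give $\gamma G_{\sigma(x)}\gamma^{-1}\cap G_{\sigma(x)}=\{1\}$, and since $m(G_{\sigma(x)})=m(\gamma G_{\sigma(x)}\gamma^{-1})=1$ we get $m(\{1\})=1$, contradicting atomlessness. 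Second, your verification of the hypotheses of Corollary~\ref{cor:non_irr_CC} is not correct as stated: ``$G_{\Gamma_i}$ is not virtually cyclic'' does not imply the absence of fixed points or finite orbits in $\partial X$ (a group can be far from virtually cyclic and still fix a boundary point). The paper has to prove this: Lemma~\ref{lem:nofixedpoint} rules out fixed points at infinity, and ruling out invariant pairs requires showing $X_{\Gamma_i}$ is not pseudo-Euclidean, which uses Lemma~\ref{lem:subsets3} to produce a three-vertex subset $S$ with $|E\Gamma_S|\le 1$ and a quasi-isometry argument on the convex subcomplex $X_{\Gamma_S}$ of Lemma~\ref{lem:conv}. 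That is substantive work, not bookkeeping, and without it neither Corollary~\ref{cor:non_irr_CC} nor Theorem~\ref{thm:fix_ccc} can be applied.
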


To prove this theorem, we use a nice combinatorial action of $G_\Gamma$ on a \cat cube complex $X_\Gamma$ due to Meier and Davis \cite{MR1389635,MR1709955}. This construction is also described in \cite[Example II.12.30.(2)]{MR1744486}. We refer to these references for an explicit construction. This action has a cubical complex $C\Gamma$ as strict fundamental domain, which is the cubulation of the simplicial complex $F\Gamma$. Let us describe it. The complex $C\Gamma$ is completely determined by its 1-skeleton $C\Gamma^{(1)}$ and thus we only describe this graph (this 1-skeleton is a median graph see \cite[\S6]{MR1748966}).

The vertex set of $C\Gamma^{(1)}$ is the set of cliques of $\Gamma$ together with the empty set (seen as the empty clique). Two cliques $\sigma$ and $\tau$ are joined by an edge if and only if their symmetric difference is a singleton. More generally, two cliques lie in a common cube if and only if their union is a clique. So all maximal cubes of $C\Gamma$ have a set of vertices given by the set of all subsets of a maximal clique. In particular, the link of the vertex $\emptyset$ is $F\Gamma$.

For a point $x\in C\Gamma$, we denote by $\sigma(x)$ the smallest clique (for inclusion) appearing as vertex of the smallest cube containing $x$. The CAT(0) cube complex $X_\Gamma$ (which depends on the vertex groups whereas $C\Gamma$ does not) is obtained as a quotient
$$\left({G_\Gamma\times C\Gamma}\right)/\sim$$
where $(g,x)\sim (h,y)$ if $x=y$ and $g^{-1}h\in G_{\sigma(x)}$.

This quotient is naturally endowed with the cubical structure coming from $C\Gamma$ and $G_\Gamma$ acts by automorphisms via $g\cdot(h,x)=(gh,x)$. For example, the stabilizer of a vertex $\sigma$ is exactly $G_\sigma$ (with the convention that $G_\emptyset=\{1\}$). The vertices of $X_\Gamma$ are in bijection with cosets $gG_\sigma\in G_\Gamma/G_\sigma$, that is the union indexed by the set of all cliques (possibly empty) $$X_\Gamma^{(0)}=\bigsqcup_{\sigma} G_\Gamma/G_\sigma.$$
Observe that $C\Gamma$ embeds in $X_\Gamma$ by the map $x\mapsto (1,x)$. Under this embedding the link of $\emptyset$ in $X_\Gamma$ is the same as in $C\Gamma$, that is $F\Gamma$. This follows from the fact that the stabilizer of $\emptyset$ is $G_\emptyset=\{1\}$.

\begin{exs}To explicit a bit this construction, we illustrate it on some simple examples in Table \ref{table:graphproducts}. For the sake of simplicity, the vertex groups are cyclic but the construction is not restricted to this case.

\begin{table}
    \centering
\begin{adjustbox}{center}
\begin{tabular}{c|c|c|c}
$\Gamma$& \begin{tabular}{c}Vertex groups and\\their graph products\end{tabular}& $C\Gamma$&$X_\Gamma$\\
 \specialrule{2pt}{0pt}{0pt}
	\adjustbox{valign=c}{\begin{tikzpicture}
	\draw (0,0) node {$\bullet$} ;
	 \draw (0,0) node[above]{$v$} ;
	 \end{tikzpicture}}

	&$G_v=G_\Gamma=\Zb/5\Zb$&
	\adjustbox{valign=c}{\begin{tikzpicture}
	\draw (0,0) node {$\bullet$} ;
	\draw (0,0) node[above]{$\emptyset$} ;
	\draw (0,0)--(1,0);
	\draw (1,0) node {$\bullet$} ;
	\draw (1,0) node[above]{$\{v\}$} ;
	 \end{tikzpicture}}&
	\adjustbox{valign=c}{ \begin{tikzpicture}
	 \draw (0,0) node {$\bullet$} ;
	 \foreach \x in {0,...,4}
	 {\draw (\x*72:1) node {$\bullet$} ;
	 \draw (\x*72:1.3) node {$\x$} ;
	 \draw (0,0) -- (\x*72:1);}
	 \draw (-.15,0) node[left]{$\{v\}$};
	 \end{tikzpicture}}\\
	 \hline
	\adjustbox{valign=c}{ \begin{tikzpicture}
	\draw (-.5,0) node {$\bullet$} ;
	 \draw (-.5,0) node[above]{$v$} ;
	 \draw (.5,0) node {$\bullet$} ;
	 \draw (.5,0) node[above]{$v^\prime$} ;
	 \end{tikzpicture}}&\begin{tabular}{c}$G_v=\Zb/3\Zb,\ G_{v^\prime}=\Zb/4\Zb$ \\ \\ $G_\Gamma=\Zb/3\Zb\ast\Zb/4\Zb$\end{tabular}&
	\adjustbox{valign=c}{ \begin{tikzpicture}
	\draw (0,0) node {$\bullet$} ;
	\draw (0,0) node[below]{$\emptyset$} ;
	 \draw (0,0)--(45:1);
	\draw (45:1) node {$\bullet$} ;
	\draw (45:1) node[above]{$\{v\}$} ;
	 \draw (0,0)--(135:1);
	\draw (135:1) node {$\bullet$} ;
	\draw (135:1) node[above]{$\{v^\prime\}$} ;
	\end{tikzpicture}}&
	\adjustbox{valign=c}{ \begin{tikzpicture}[scale=.5]
	\draw (0,0) node {$\bullet$}--(1,0) node {$\bullet$} -- (2,0) node {$\bullet$};
	\draw (0,0) node {$\bullet$}--(-1,0) node {$\bullet$} -- (-2,0) node {$\bullet$};
	\draw (0,0) node {$\bullet$}--(0,1) node {$\bullet$} -- (0,2) node {$\bullet$};
	\draw (0,0) node {$\bullet$}--(0,-1) node {$\bullet$} -- (0,-2) node {$\bullet$};
	\draw (2,0)--++(45:1) node {$\bullet$} node[above right]{\tiny\reflectbox{$\ddots$}};
	\draw (2,0)--++(-45:1) node {$\bullet$} node[below right]{\tiny$\ddots$};
	\draw (0,2)--++(45:1) node {$\bullet$}node[above right]{\tiny\reflectbox{$\ddots$}};
	\draw (0,2)--++(135:1) node {$\bullet$}node[above left]{\tiny$\ddots$};
	\draw (0,-2)--++(-135:1) node {$\bullet$}node[below left]{\tiny\reflectbox{$\ddots$}};
	\draw (0,-2)--++(-45:1) node {$\bullet$} node[below right]{\tiny$\ddots$};
	\draw (-2,0)--++(135:1) node {$\bullet$} node[above left]{\tiny$\ddots$};
	\draw (-2,0)--++(-135:1) node {$\bullet$}node[below left]{\tiny\reflectbox{$\ddots$}};
	\draw (1,0) node[below]{\tiny$\emptyset$};
	\draw (2,0) node[right]{\tiny$\{v\}$};
	\draw (.25,0.05) node[below left]{\tiny$\{v^\prime\}$};
	\end{tikzpicture}}
	\\
	\hline
	\adjustbox{valign=c}{\begin{tikzpicture}
	\draw (-.5,0) node {$\bullet$} ;
	 \draw (-.5,0) node[above]{$v$} ;
	 \draw (.5,0) node {$\bullet$} ;
	 \draw (.5,0) node[above]{$v^\prime$} ;
	 \draw (-.5,0)--(.5,0);
	 \end{tikzpicture}}&\begin{tabular}{c}$G_v=\Zb/2\Zb,\ G_{v^\prime}=\Zb/3\Zb$ \\ \\ $G_\Gamma=\Zb/2\Zb\times\Zb/3\Zb$\end{tabular}&
	\adjustbox{valign=c}{ \begin{tikzpicture}
	 \draw[fill=gray!40] (0,0)--(1,0)--(1,1)--(0,1)--(0,0);
	 \draw (0,0) node {$\bullet$} ;
	 \draw (0,0) node[below left]{$\emptyset$} ;
	 \draw (1,0) node {$\bullet$} ;
	 \draw (1,0) node[below right]{$\{v\}$} ;
	 \draw (0,1) node {$\bullet$} ;
	 \draw (0,1) node[above left]{$\{v^\prime\}$} ;
	 \draw (1,1) node {$\bullet$} ;
	 \draw (1,1) node[above right]{$\{v,v^\prime\}$} ;
	 \end{tikzpicture}}
	 &
	\adjustbox{valign=c}{ \begin{tikzpicture}
	 \draw[fill=gray!40] (0,0)--(1,0)--(1,1)--(0,1)--(0,0);
	 \draw[fill=gray!40] (1,0)--(2,0)--(2,1)--(1,1)--(1,0);
	 \draw[fill=gray!40] (0,1)--(1,1)--(1,2)--(0,2)--(0,1);
	 \draw[fill=gray!40] (1,1)--(2,1)--(2,2)--(1,2)--(1,1);
	\draw[fill=gray!40] (0,1)-- ++(45:1)-- ++(1,0)-- ++(225:1);
	\draw[fill=gray!40] (1,1)-- ++(45:1)-- ++(1,0)-- ++(225:1);
	 \draw (0,0) node {$\bullet$} node[below left]{\scriptsize$\emptyset$} ;
	 \draw (1,0) node {$\bullet$} node[below ]{\scriptsize$\{v\}$} ;
	  \draw (0,1) node {$\bullet$} node[left]{\scriptsize $\{v^\prime\}$} ;
	  \draw (1,1) node {$\bullet$} node[below right]{\scriptsize$\{v,v’\}$} ;
	  \draw (0,2) node {$\bullet$} node[above left] {\scriptsize$(0,1)$};
	  \draw (0,1)++(45:1) node {$\bullet$} ++(.2,0)node[below] {\tiny$(0,2)$};
	   \draw (2,1)+(45:1) node {$\bullet$} node[right] {\tiny$(1,2)$};
	    \draw (2,2) node {$\bullet$} node[above right ] {\tiny$(1,1)$};
	     \draw (2,0) node {$\bullet$} node[below right ] {\tiny$(1,0)$};
	       \draw (1,1)+(45:1) node {$\bullet$};
	 \end{tikzpicture}}\\
	 \hline
	 \adjustbox{valign=c}{\begin{tikzpicture}
	\draw (0,0) node {$\bullet$} ;
	 \draw (0,0) node[below left]{$v$} ;
	 \draw (1,0) node {$\bullet$} ;
	 \draw (1,0) node[below right]{$v^\prime$} ;
	 \draw(60:1) node {$\bullet$} ;
	 \draw (60:1) node[above] {$v^{\prime\prime}$};
	 \draw (0,0)--(1,0);
	 \end{tikzpicture}}&\begin{tabular}{c}$G_v=G_{v^\prime}=G_{v^{\prime\prime}}=\Zb/2\Zb$ \\ \\ $G_\Gamma=\Zb/2\Zb\ast\left(\Zb/2\Zb\times\Zb/2\Zb\right)$\end{tabular}&
	\adjustbox{valign=c}{ \begin{tikzpicture}
	 \draw[fill=gray!40] (0,0)--(1,0)--(1,1)--(0,1)--(0,0);
	 \draw (0,0) node {$\bullet$} ;
	 \draw (0,0) node[ left]{$\emptyset$} ;
	 \draw (1,0) node {$\bullet$} ;
	 \draw (1,0) node[ right]{$\{v\}$} ;
	 \draw (0,1) node {$\bullet$} ;
	 \draw (0,1) node[above left]{$\{v^\prime\}$} ;
	 \draw (1,1) node {$\bullet$} ;
	 \draw (1,1) node[above right]{$\{v,v^\prime\}$} ;
	 \draw (0,0)--(-90:1) node{$\bullet$};
	 \draw (-90:1) node[right]{$\{v^{\prime\prime}\}$};
	 \end{tikzpicture}}
	 &
	\adjustbox{valign=c}{\begin{tikzpicture}[scale=.5]
	  \draw[fill=gray!40] (0,0)--(1,0)--(1,1)--(0,1)--(0,0);
	 \draw[fill=gray!40] (1,0)--(2,0)--(2,1)--(1,1)--(1,0);
	 \draw[fill=gray!40] (0,1)--(1,1)--(1,2)--(0,2)--(0,1);
	 \draw[fill=gray!40] (1,1)--(2,1)--(2,2)--(1,2)--(1,1);
	 \begin{scope}[yshift=3.41cm,xshift=3.41cm]
		\draw[fill=gray!40] (0,0)--(1,0)--(1,1)--(0,1)--(0,0);
	 	\draw[fill=gray!40] (1,0)--(2,0)--(2,1)--(1,1)--(1,0);
	 	\draw[fill=gray!40] (0,1)--(1,1)--(1,2)--(0,2)--(0,1);
	 	\draw[fill=gray!40] (1,1)--(2,1)--(2,2)--(1,2)--(1,1);
	 	\end{scope}
		\begin{scope}[yshift=-3.41cm,xshift=-3.41cm]
		\draw[fill=gray!40] (0,0)--(1,0)--(1,1)--(0,1)--(0,0);
	 	\draw[fill=gray!40] (1,0)--(2,0)--(2,1)--(1,1)--(1,0);
	 	\draw[fill=gray!40] (0,1)--(1,1)--(1,2)--(0,2)--(0,1);
	 	\draw[fill=gray!40] (1,1)--(2,1)--(2,2)--(1,2)--(1,1);
	 	\end{scope}
		\begin{scope}[yshift=+3.41cm,xshift=-3.41cm]
		\draw[fill=gray!40] (0,0)--(1,0)--(1,1)--(0,1)--(0,0);
	 	\draw[fill=gray!40] (1,0)--(2,0)--(2,1)--(1,1)--(1,0);
	 	\draw[fill=gray!40] (0,1)--(1,1)--(1,2)--(0,2)--(0,1);
	 	\draw[fill=gray!40] (1,1)--(2,1)--(2,2)--(1,2)--(1,1);
	 	\end{scope}
		\begin{scope}[yshift=-3.41cm,xshift=+3.41cm]
		\draw[fill=gray!40] (0,0)--(1,0)--(1,1)--(0,1)--(0,0);
	 	\draw[fill=gray!40] (1,0)--(2,0)--(2,1)--(1,1)--(1,0);
	 	\draw[fill=gray!40] (0,1)--(1,1)--(1,2)--(0,2)--(0,1);
	 	\draw[fill=gray!40] (1,1)--(2,1)--(2,2)--(1,2)--(1,1);
	 	\end{scope}
	 \draw (0,0) node {\tiny$\bullet$} node[above left]{\tiny$\emptyset$} --(225:1)node {\tiny$\bullet$} --(225:2)node {\tiny$\bullet$} ;
	\draw (2,0) node {\tiny$\bullet$}  --+(-45:1)node {\tiny$\bullet$} --+(-45:2)node {\tiny$\bullet$} ;
	 \draw (2,2) node {\tiny$\bullet$}  --+(45:1)node {\tiny$\bullet$} --+(45:2)node {\tiny$\bullet$} ;
	 \draw (0,2) node {\tiny$\bullet$}  --+(135:1)node {\tiny$\bullet$} --+(135:2)node {\tiny$\bullet$} ;
	 \draw (4,4)+(45:2) node[above right]{\reflectbox{\tiny$\ddots$}};
	\draw (4,-2)++(-45:2) node[below right]{\tiny$\ddots$};
	\draw (-2,4)++(135:2) node[above left]{\tiny$\ddots$};
	\draw (-2,-2)++(-135:2) node[below left]{\reflectbox{\tiny$\ddots$}};
	 \end{tikzpicture}}\\

\end{tabular}
\end{adjustbox}
\caption{Some examples of graph products and their associated \cat cube complexes.}\label{table:graphproducts}

\end{table}
\end{exs}
 \begin{lem}\label{lem:conv} Let $S\subset V\Gamma$. The \cat cube complex $X_{\Gamma_S}$ embeds as a convex subcomplex of $X_\Gamma$ in a $G_{S}$-equivariant way. \end{lem}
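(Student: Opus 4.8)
The plan is to build the embedding directly from the combinatorial description of the complexes and then verify convexity by a local-to-global criterion. Since $\Gamma_S$ is the \emph{induced} subgraph on $S$, the cliques of $\Gamma_S$ are exactly the cliques $\sigma$ of $\Gamma$ with $\sigma\subseteq S$; consequently $C\Gamma_S$ is literally a subcomplex of $C\Gamma$ (the full subcomplex spanned by the cliques contained in $S$), and for every such $\sigma$ the subgroup $G_\sigma=\prod_{v\in\sigma}G_v$ is the same whether formed inside $G_S$ or inside $G_\Gamma$, because $G_\sigma\leq G_S\leq G_\Gamma$. First I would define the candidate map $\bar\iota\colon X_{\Gamma_S}\to X_\Gamma$ as the map descending from $(g,x)\mapsto (g,x)$ for $g\in G_S$ and $x\in C\Gamma_S\subseteq C\Gamma$. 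This is well defined because the smallest cube of $C\Gamma$ containing a point $x$ in the interior of a cube of $C\Gamma_S$ is that very cube, so the minimal clique $\sigma(x)$ computed in $C\Gamma$ equals the one computed in $C\Gamma_S$ and lies in $S$; hence the two equivalence relations (via $G_{\sigma(x)}$) agree. The map is $G_S$-equivariant by construction, since $G_S$ acts by $g\cdot(h,x)=(gh,x)$ on both sides.

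Next I would check $\bar\iota$ is a cubical embedding. On each cube it is an isometry onto the corresponding cube of $X_\Gamma$, so it suffices to check injectivity on vertices. Using that $X_\Gamma^{(0)}=\bigsqcup_\sigma G_\Gamma/G_\sigma$ is a \emph{disjoint} union, two image vertices $gG_\sigma$ and $hG_\tau$ (with $g,h\in G_S$ and $\sigma,\tau\subseteq S$) coincide in $X_\Gamma$ if and only if $\sigma=\tau$ and $g^{-1}h\in G_\sigma\subseteq G_S$, which is exactly the condition for them to coincide in $X_{\Gamma_S}$. The same computation shows $\bar\iota$ is injective on all of $X_{\Gamma_S}$, so $\bar\iota$ is a $G_S$-equivariant cubical embedding onto a subcomplex $Y\defeq\bar\iota(X_{\Gamma_S})$.

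Finally, convexity. Since $Y$ is a full subcomplex, CAT($0$)-convexity is equivalent to the combinatorial condition, and I would invoke the standard local-to-global criterion (see, e.g., \cite{MR3329724}): a connected, full, locally convex subcomplex of a \cat cube complex is convex, where \emph{locally convex} means that any two edges of $Y$ sharing a vertex and spanning a square of $X_\Gamma$ span a square already contained in $Y$. \textbf{Fullness} is direct: if every vertex $gG_{\tau'}$ of a cube of $X_\Gamma$ (with $\tau\subseteq\tau'\subseteq\rho$) lies in $Y$, then taking $\tau'=\tau$ forces $g\in G_SG_\tau=G_S$ and the top clique $\rho\subseteq S$, so the whole cube is the image of a cube of $C\Gamma_S$ and lies in $Y$. \textbf{Local convexity} I would verify using the explicit description of the edges at a vertex $gG_\sigma$ of $Y$: the ``upward'' edges correspond to vertices $v\in V\Gamma\setminus\sigma$ adjacent to all of $\sigma$ (moving to $gG_{\sigma\cup\{v\}}$), and the ``downward'' edges correspond to pairs $(w,h_w)$ with $w\in\sigma$ and $h_w\in G_w$ (moving to $gh_wG_{\sigma\setminus\{w\}}$). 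A square is spanned by two upward directions $v_1,v_2$ (when $\sigma\cup\{v_1,v_2\}$ is a clique), by an upward $v$ and a downward $(w,h_w)$, or by two downward directions with distinct $w_1\neq w_2$; two downward directions with the same $w$ never span a square. In every case, if both edges lie in $Y$ then $g\in G_S$, the relevant $v$'s lie in $S$, the relevant $h_w$ lie in $G_w\leq G_S$, and the resulting square has all its cliques contained in $S$, hence lies in $Y$. Thus $Y$ is convex, completing the proof. The main content is the convexity step; once the link combinatorics above are recorded, the verification is a short case check, so I expect the principal obstacle to be merely setting up the local-convexity criterion cleanly and identifying the edge directions at a vertex correctly.
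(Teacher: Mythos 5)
Your proof is correct and takes essentially the same route as the paper: both realize $X_{\Gamma_S}$ inside $X_\Gamma$ via the coset description of vertices, $\bigsqcup_{\sigma\subseteq S}G_S/G_\sigma\subseteq\bigsqcup_{\sigma}G_\Gamma/G_\sigma$ (equivalently, your map $(g,x)\mapsto(g,x)$ on the quotient description), with $G_S$-equivariance immediate from the construction. The only difference is the convexity step: the paper dispatches it in one line by noting that two vertices of the image lie in a common cube of $X_\Gamma$ if and only if they lie in a common cube of $X_{\Gamma_S}$, whereas you verify fullness and the link-level local convexity condition and invoke the standard local-to-global criterion --- a more detailed (and arguably more airtight, since you also get connectedness for free from connectedness of $X_{\Gamma_S}$) rendering of the same combinatorial content.
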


 \begin{proof} By construction, the set of vertices of $X\Gamma_S$ is the union
 $$\bigsqcup_{\sigma\subset S} G_S/G_\sigma $$
 over cliques included in $S$. This can be seen as a subset of $$\bigsqcup_{\sigma\subset V\Gamma} G_\Gamma/G_\sigma$$ and this gives the embedding at the level of vertices. It is clearly $G_S$-equivariant. Moreover, by construction, two vertices of $X_{\Gamma_S}$ lie in a common cube of $X_\Gamma$ if and only if they lie in a common cube of $X_{\Gamma_S}$ and thus the embedding is convex.
 \end{proof}

\begin{lem}\label{lem:nofixedpoint} The action of $G_\Gamma\acts\partial X_\Gamma$ has no fixed point. \end{lem}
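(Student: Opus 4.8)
The plan is to assume, toward a contradiction, that some $\xi\in\partial X_\Gamma$ is fixed by all of $G_\Gamma$, and to derive a contradiction from the Busemann function at $\xi$ together with cocompactness of the action. This avoids any case analysis over the structure of $\Gamma$.

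First I would record the two structural facts that drive the argument. Since $\Gamma$ is finite, $X_\Gamma$ is a finite-dimensional, hence complete, \cat cube complex, and the action of $G_\Gamma$ is cocompact because $C\Gamma$ is a compact fundamental domain; thus the orbit $G_\Gamma\cdot x_\emptyset$ of the base vertex $x_\emptyset$ (the coset $G_\emptyset$) is $R$-dense in $X_\Gamma$, where $R\defeq \operatorname{diam}(C\Gamma)<\infty$. Let $\beta\colon X_\Gamma\to\Rb$ be the Busemann function centered at $\xi$, normalized so that $\beta(x_\emptyset)=0$; it is $1$-Lipschitz and decreases to $-\infty$ along any geodesic ray pointing at $\xi$. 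Because $G_\Gamma$ fixes $\xi$, the transformation rule for Busemann functions produces a homomorphism (the Busemann character) $\chi\colon G_\Gamma\to\Rb$ with $\beta(g\cdot x)-\beta(x)=\chi(g)$ for all $g\in G_\Gamma$ and all $x\in X_\Gamma$, the left-hand side being independent of $x$.

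The key observation is that $\chi$ vanishes. Each vertex group $G_v$ fixes the vertex $G_{\{v\}}$ of $X_\Gamma$: for $g\in G_v$ we have $g\,G_{\{v\}}=gG_v=G_v=G_{\{v\}}$. Hence every element of $G_v$ is elliptic, and evaluating $\chi$ at such a fixed point gives $\chi|_{G_v}=0$. Since the subgroups $G_v$ generate $G_\Gamma$ and $\chi$ is a homomorphism, $\chi\equiv 0$; equivalently, $\beta$ is $G_\Gamma$-invariant and therefore constant on the orbit $G_\Gamma\cdot x_\emptyset$. As this orbit is $R$-dense and $\beta$ is $1$-Lipschitz, $\beta$ is bounded on all of $X_\Gamma$. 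This contradicts the fact that $\beta$ decreases to $-\infty$ along a ray to $\xi$, so no such $\xi$ can exist.

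The only nontrivial input is the existence and homomorphism property of the Busemann character for a group fixing a boundary point, which is standard \cat geometry; the remaining step — that every vertex group is elliptic, forcing $\chi\equiv 0$ — is immediate from the coset description of the vertices of $X_\Gamma$, and I expect this to be the crux. The pitfall to avoid is attempting to conclude without cocompactness, since a boundary point with nonzero character (e.g. an end fixed by a translation) is perfectly possible in general; it is precisely the elliptic generation of $G_\Gamma$ that rules this out here. An alternative, more hands-on route sidesteps the character by invoking Lemma~\ref{lem:conv}: for non-adjacent $v,w$ the convex subcomplex $X_{\Gamma_{\{v,w\}}}$ is a tree on which $G_v\ast G_w$ acts minimally with no fixed end, and one pushes a hypothetical fixed $\xi$ down to an invariant boundary point via Proposition~\ref{prop:cp}; this is longer and must separately dispose of vertices adjacent to all others (which contribute only bounded star factors), so I would present the Busemann-character argument as the main proof.
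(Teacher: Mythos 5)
Your proof is correct, and it takes a genuinely different route from the paper's. The paper argues locally at a single vertex: fixing a \emph{maximal} clique $\sigma$, it observes that a hypothetical $G_\Gamma$-fixed point $\xi\in\partial X_\Gamma$ would force the geodesic ray from the vertex $\sigma$ to $\xi$ to be fixed pointwise by $G_\sigma$ (which stabilizes both endpoints), and then derives a contradiction from the cube combinatorics at $\sigma$: the maximal cubes containing $\sigma$ are permuted transitively by $G_\sigma$, and suitable $G_\sigma$-translates of one such cube meet only in $\{\sigma\}$, so no nondegenerate ray issuing from $\sigma$ can be $G_\sigma$-invariant. Note that this actually proves something stronger than the lemma, namely that already the subgroup $G_\sigma$ fixes no point of $\partial X_\Gamma$. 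Your argument is instead global: elliptic generation (each $G_v$ fixes the vertex given by the coset $G_v$) kills the Busemann character $\chi$, and cocompactness then forces the Busemann function to be bounded, which is absurd since it tends to $-\infty$ along a ray to $\xi$. What this buys is a cleaner and more general statement --- a group generated by elliptic isometries acting cocompactly on a complete \cat space fixes no point at infinity --- with the specifics of $X_\Gamma$ entering only through the two facts you isolate. The price is that you need completeness of $X_\Gamma$ (so that Busemann functions centered at the same boundary point differ by constants, making $\chi$ well defined) and compactness of $C\Gamma$; both hold because $\Gamma$ is finite, and you flag both, so there is no gap. The paper's proof, by contrast, invokes neither cocompactness nor the Busemann machinery and stays within the combinatorial framework of the section.
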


\begin{proof}Let $\sigma$ be a maximal clique of the graph $\Gamma$. Assume that $\xi\in\partial X_\Gamma$ is a $G_\Gamma$-fixed point. The geodesic ray $L$ from the vertex $\sigma$ to $\xi$ is pointwise fixed by $G_\sigma$. The maximal cubes having $\sigma$ as vertex are images by some element $g\in G_\sigma$ of the cube $C$ with vertex set $\{\tau\mid \tau\subset\sigma\}$. In particular, if $g\in G_\sigma$ does not lie in any $G_v$ for $v\in\sigma$ then $gC\cap C=\{\sigma\}$. So $G_\sigma$ acts transitively on maximal cubes attached to $\sigma$ and the intersection of all these cubes is reduced to $\sigma$. We have a contradiction because there is some maximal cube attached to $\sigma$ such that the intersection of $L$ and this cube is not reduced to $\{\sigma\}$.
\end{proof}

If $\Gamma$ is a join, the group $G_\Gamma$ splits as direct product and the \cat cube complex $X_\Gamma$ splits as a direct product with factors associated to the factors of $G_\Gamma$. The converse is also true.

\begin{lem}\label{lem:irreducible} If $\Gamma$ is not a join then $X_\Gamma$ is irreducible.
\end{lem}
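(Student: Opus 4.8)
The plan is to use the standard criterion for (ir)reducibility of a finite dimensional \cat cube complex in terms of its hyperplanes: by \cite[Lemma 2.5]{MR2827012}, $X_\Gamma$ is reducible if and only if its set of hyperplanes $\mathcal{W}$ admits a partition $\mathcal{W} = \mathcal{W}_1 \sqcup \mathcal{W}_2$ into two nonempty families which are transverse, meaning every hyperplane of $\mathcal{W}_1$ crosses every hyperplane of $\mathcal{W}_2$. So I would assume such a partition exists and derive that $\Gamma$ is a join. The key combinatorial input is a $G_\Gamma$-invariant \emph{type function} on edges: from the description of $C\Gamma$, an edge of $X_\Gamma$ joins two vertices $gG_\sigma$ and $gG_\tau$ whose underlying cliques have symmetric difference a single vertex $v \in V\Gamma$, and I would assign this edge the type $v$. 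Since $G_\Gamma$ acts on $X_\Gamma = (G_\Gamma \times C\Gamma)/\!\sim$ via $g\cdot(h,x) = (gh,x)$, it fixes the $C\Gamma$-coordinate and hence preserves types.

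Next I would promote the type function to hyperplanes. The squares of $X_\Gamma$ correspond exactly to the $2$-cliques of $\Gamma$: a square has vertex set $\{\sigma, \sigma\cup\{v\}, \sigma\cup\{w\}, \sigma\cup\{v,w\}\}$ for some clique $\sigma$ and some edge $\{v,w\}\in E\Gamma$, and its two pairs of opposite edges have types $v$ and $w$ respectively. In particular opposite edges of a square share a type, so all edges dual to a given hyperplane have a common type, and the type descends to a well-defined map $\tau \colon \mathcal{W}\to V\Gamma$. Moreover two hyperplanes are transverse exactly when they are dual to the two edge-directions of a common square, so transverse hyperplanes $\widehat{\mathfrak{h}}, \widehat{\mathfrak{k}}$ satisfy $\{\tau(\widehat{\mathfrak{h}}),\tau(\widehat{\mathfrak{k}})\}\in E\Gamma$; in particular their types are distinct, and hyperplanes of equal type are never transverse.

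With these facts the argument is short. Given the transverse partition $\mathcal{W}_1\sqcup\mathcal{W}_2$, all hyperplanes of a fixed type $v$ must lie in a single class: two of them in opposite classes would be transverse, contradicting that equal-type hyperplanes never cross. Since every $v\in V\Gamma$ is realized by some hyperplane (as $G_v\neq 1$ produces an edge of type $v$), this defines a partition $V\Gamma = V_1\sqcup V_2$, where $V_i$ collects the types of the hyperplanes in $\mathcal{W}_i$; both $V_1$ and $V_2$ are nonempty because $\mathcal{W}_1$ and $\mathcal{W}_2$ are. Finally, for any $v\in V_1$ and $w\in V_2$, pick hyperplanes of types $v$ and $w$; they lie in different classes, hence are transverse, hence $\{v,w\}\in E\Gamma$. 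Thus every vertex of $V_1$ is adjacent to every vertex of $V_2$, i.e.\ $\Gamma$ is a join, contradicting the hypothesis. Therefore $X_\Gamma$ is irreducible.

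I expect the main obstacle to be the precise combinatorial verification that the squares of $X_\Gamma$ are exactly the $2$-cliques of $\Gamma$ and that the type is a genuine hyperplane invariant; this amounts to unwinding the Davis--Meier construction of $C\Gamma$ (where cliques sharing a cube are exactly those whose union is a clique) and checking that the relation $\sim$ does not identify edges of different types. Everything else is a formal consequence of the hyperplane criterion for reducibility.
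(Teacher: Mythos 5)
Your proof is correct, and it rests on the same two pillars as the paper's own argument: the Caprace--Sageev hyperplane criterion for product decompositions, and the correspondence between hyperplanes of $X_\Gamma$ and vertices of $\Gamma$ under which crossing of hyperplanes reflects adjacency in $\Gamma$. The execution, however, differs in one genuine way. The paper labels only the distinguished hyperplanes $\mathfrak{h}_v$ dual to the edges through the base vertex $\emptyset$ of the fundamental domain $C\Gamma$, invokes the canonical ($\Aut$-stable) decomposition of \cite[Proposition 2.6]{MR2827012}, and then asserts that the induced partition of $V\Gamma$ is non-trivial ``because $C\Gamma$ is a fundamental domain'' --- a step left terse, since a priori all the $\mathfrak{h}_v$ could land in a single factor's hyperplane family. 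You instead define a $G_\Gamma$-invariant type map $\tau\colon\mathcal{W}\to V\Gamma$ on \emph{all} hyperplanes (well defined because opposite edges of any square share a type) and isolate the two key combinatorial facts: hyperplanes of equal type never cross, while crossing hyperplanes have adjacent types. With these in hand, given a transverse partition $\mathcal{W}=\mathcal{W}_1\sqcup\mathcal{W}_2$ as in \cite[Lemma 2.5]{MR2827012}, each type class lies wholly in one $\mathcal{W}_i$, so the partition $V\Gamma=V_1\sqcup V_2$ is automatically well defined, exhaustive, and non-trivial, and the join structure of $\Gamma$ follows at once. Your route thus buys a self-contained justification of exactly the point the paper glosses over, at the cost of the (routine) verification that squares of $X_\Gamma$ correspond to a clique together with an edge of $\Gamma$ and that the quotient relation defining $X_\Gamma$ respects edge types.
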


\begin{proof} In the complex $C\Gamma$, vertices are in bijection with cliques $S\subset V\Gamma$ (possibly empty). Two such vertices are connected by an edge if they differ by one element. In particular, edges with one end $\emptyset$ have some singleton $\{v\}$ for the other end. Since any maximal cube in $C\Gamma$ contains the vertex $\emptyset$, to any hyperplane of $C\Gamma$, one can associate a unique $v\in V\Gamma$, which is the unique $v$ such that this hyperplane is the parallelism class of a unique edge with ends $\emptyset$ and $\{v\}$. We denote by $\mathfrak{h}_v$ this hyperplane (seen as an hyperplane of $C\Gamma$ or $X_\Gamma$).  By construction, for any $v_1,v_2$, $\mathfrak{h}_{v_1}$ crosses $\mathfrak{h}_{v_2}$ if and only if $\{v_1,v_2\}\in E\Gamma$.

Assume that $X_\Gamma$ splits as as product of \cat cube complexes then there is a canonical non-trivial decomposition $X_\Gamma=X_0\times\dots\times X_n$  and this decomposition is stable under the action of the automorphism group (which possibly permutes the isomorphic factors). In that case, the set of hyperplanes $\mathcal{W}$ is the (non-trivial) disjoint union $\mathcal{W}_0\sqcup\dots\sqcup\mathcal{W}_n$ where any $\widehat{\mathfrak{h}_i}\in\mathcal{W}_i$ meets any $\widehat{\mathfrak{h}_j}\in\mathcal{W}_j$. See \cite[Proposition 2.6]{MR2827012}. This partition of $\mathcal{W}$ induces a partition of $V\Gamma$ in the following way. If $V_i=\left\{v,\ \widehat{\mathfrak{h}}_v\in\mathcal{W}_i\right\}$ then $V\Gamma=V_0\sqcup \dots\sqcup V_n$ and for any $i,j$ distinct, $v_i\in V_i,$ $v_j\in V_j$, $\{v_i,v_j\}\in E\Gamma$. Moreover, this partition is non-trivial because $C\Gamma$ is a fundamental domain for the action of $G_\Gamma$.
\end{proof}

Let us recall that an action by automorphisms of a group $G$ on a \cat cube complex is \textbf{essential} (or $G$-\textbf{essential} if we aim to emphasize the action) if all hyperplanes are essential, that is there is no orbit at a bounded distance from an half-space.

\begin{lem}\label{lem:essential} If $\Gamma$ is not a join and has at least two vertices then the action $G_\Gamma\acts X_\Gamma$ is essential. \end{lem}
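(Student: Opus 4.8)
The plan is to reduce the statement to a computation in a tree. Recall from the proof of Lemma \ref{lem:irreducible} that every hyperplane of $C\Gamma$ is of the form $\mathfrak{h}_v$ for a unique $v\in V\Gamma$, where $\mathfrak{h}_v$ is the parallelism class of the edge $e_v$ joining $\emptyset$ to $\{v\}$. Since $C\Gamma$ is a strict fundamental domain, every edge of $X_\Gamma$ is a $G_\Gamma$-translate of an edge of $C\Gamma$, and since parallelism inside $C\Gamma$ persists in $X_\Gamma$ (squares of $C\Gamma$ are squares of $X_\Gamma$), every hyperplane of $X_\Gamma$ is a $G_\Gamma$-translate of some $\mathfrak{h}_v$. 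Essentiality of a hyperplane is preserved under the $G_\Gamma$-action, since a translate of a deep orbit point is again a deep orbit point for the translated hyperplane. Hence it suffices to prove that each $\mathfrak{h}_v$ is essential, i.e. that the orbit $G_\Gamma\cdot\emptyset$ meets both half-spaces $\mathfrak{h}_v^+\ni\{v\}$ and $\mathfrak{h}_v^-\ni\emptyset$ arbitrarily deeply.

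Since $\Gamma$ is not a join and has at least two vertices, its complement $\overline{\Gamma}$ is connected with at least two vertices, so it has no isolated vertex; I fix $w\neq v$ with $\{v,w\}\notin E\Gamma$. Then $G_{\{v,w\}}=G_v\ast G_w$, and by Lemma \ref{lem:conv} the complex $T\defeq X_{\Gamma_{\{v,w\}}}$ embeds convexly and $(G_v\ast G_w)$-equivariantly into $X_\Gamma$. As $\Gamma_{\{v,w\}}$ has no edge, $T$ is one-dimensional, hence a tree (the barycentric subdivision of the Bass--Serre tree of $G_v\ast G_w$), and $e_v$ is an edge of $T$; its dual hyperplane in $T$ is the restriction of $\mathfrak{h}_v$, and convexity of the embedding guarantees that the two sides of $e_v$ in $T$ lie in the two half-spaces $\mathfrak{h}_v^\pm$.

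Next I would exhibit the deep points explicitly. Fixing nontrivial $a\in G_v$ and $b\in G_w$ and tracing geodesics in $T$ out of $\emptyset=\{1\}$, the vertex $(ab)^n\cdot\emptyset=\{(ab)^n\}$ lies on the $\{v\}$-side of $e_v$ (its first step crosses $e_v$) at combinatorial distance tending to infinity from $\mathfrak{h}_v$, while $(ba)^n\cdot\emptyset=\{(ba)^n\}$ lies on the $\emptyset$-side at distance tending to infinity. Thus the orbit goes arbitrarily deep into both $\mathfrak{h}_v^+$ and $\mathfrak{h}_v^-$ \emph{within} $T$. The key transfer step is that convexity of $T\subseteq X_\Gamma$ promotes this to depth in $X_\Gamma$: a combinatorial geodesic of $X_\Gamma$ between two points of $T$ stays in $T$, so distinct hyperplanes of $T$ separating two points extend to distinct hyperplanes of $X_\Gamma$ separating them. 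Hence the number of hyperplanes of $X_\Gamma$ separating $\{(ab)^n\}$ (resp. $\{(ba)^n\}$) from $\mathfrak{h}_v$ tends to infinity, and since $X_\Gamma$ is finite dimensional this forces the \cat distance to tend to infinity as well. This establishes that $\mathfrak{h}_v$ is essential and completes the proof.

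I expect the main obstacle to be the bookkeeping in the transfer step — being careful that ``deep in the tree $T$'' really yields ``deep in $X_\Gamma$'', which relies on convexity of the embedding (so that geodesics do not leave $T$) together with finite-dimensionality (to pass between the combinatorial and \cat metrics). The reduction that every hyperplane is a translate of some $\mathfrak{h}_v$ also deserves care, but it follows from the fundamental-domain description of $X_\Gamma$ together with the hyperplane--vertex correspondence already recorded in Lemma \ref{lem:irreducible}.
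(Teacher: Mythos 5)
Your proof is correct and follows essentially the same route as the paper's: reduce by $G_\Gamma$-invariance of essentiality to the hyperplanes $\mathfrak{h}_v$, choose $w$ non-adjacent to $v$ (possible since $\Gamma$ is not a join and has at least two vertices), and use the convexly embedded infinite leafless tree $X_{\Gamma_{\{v,w\}}}$ provided by Lemma~\ref{lem:conv} to witness that $\mathfrak{h}_v$ is essential. The paper leaves the explicit deep orbit points $(ab)^n$, $(ba)^n$ and the convexity/finite-dimension transfer implicit, whereas you spell them out, but the underlying argument is the same.
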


\begin{proof} For an hyperplane, to be $G_\Gamma$-essential is a $G_\Gamma$-invariant property. So it suffices to show that hyperplanes corresponding to edges with ends $\emptyset$ and $\{v\}$ (for some $v\in V\Gamma$) are essential. So let $v\in V\Gamma$. Since $\Gamma$ has at least two vertices and is not a join then there is $v^\prime\in V\Gamma$ such that $\{v,v^\prime\}$ is not an edge of $\Gamma$. Let $S=\{v,v^\prime\}$, $\widehat{\mathfrak{h}}$ be the hyperplane corresponding to edge between $\emptyset$ and $\{v\}$ and $\widehat{\mathfrak{h}^\prime}$ the one  between $\emptyset$ and $\{v^\prime\}$. These hyperplanes do not cross since $\{v,v^\prime\}$ is not an edge and none of their images under $G_S=G_v\ast G_{v^\prime}\leq G_\Gamma$. The images of the above edges under  $G_S$ span the  infinite tree without leaf $X_{\Gamma_S}$ (which is convexly embedded by Lemma~\ref{lem:conv}) and thus $\widehat{\mathfrak{h}}$ is essential.
\end{proof}

\begin{lem}\label{lem:subsets3} Let $\Gamma$ be a graph that is not a join and such that $|V\Gamma|\geq3$. Then, there is $S\subset V\Gamma$ such that $|S|=3$ and $|E\Gamma_S|\leq1$.
\end{lem}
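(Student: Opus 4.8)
The plan is to pass to the complement graph $\overline{\Gamma}$, where the statement becomes transparent. Note first that a three-element subset $S\subset V\Gamma$ satisfies $|E\Gamma_S|\leq 1$ if and only if among the three pairs drawn from $S$ there are at least two non-edges of $\Gamma$, that is, at least two edges of $\overline{\Gamma}$. So it suffices to exhibit three vertices spanning at least two edges in $\overline{\Gamma}$; equivalently, three vertices that induce a connected subgraph of $\overline{\Gamma}$ (a connected graph on three vertices has at least two edges).

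First I would record the standard dictionary between joins and the complement: $\Gamma$ is a join with parts $V_1,V_2$ exactly when there is no edge of $\overline{\Gamma}$ joining $V_1$ to $V_2$, i.e.\ exactly when $\overline{\Gamma}$ is disconnected. This is the same observation already used above in describing the canonical product splitting of $G_\Gamma$, whose factors correspond to the connected components of $\overline{\Gamma}$. Hence the hypothesis that $\Gamma$ is not a join is precisely the statement that $\overline{\Gamma}$ is connected, and by assumption $\overline{\Gamma}$ has at least three vertices.

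Next I would produce a path of length two inside $\overline{\Gamma}$. Since $\overline{\Gamma}$ is connected with $|V\Gamma|\geq 3$, some vertex $b$ has degree at least $2$ in $\overline{\Gamma}$: if every vertex had degree at most $1$, then $\overline{\Gamma}$ would be a disjoint union of isolated vertices and single edges, which cannot be connected on three or more vertices. Choosing two distinct neighbours $a,c$ of $b$ in $\overline{\Gamma}$, the set $S\defeq\{a,b,c\}$ has three distinct elements, and both $\{a,b\}$ and $\{b,c\}$ lie in $E\overline{\Gamma}$.

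Finally I would conclude: since $\{a,b\}$ and $\{b,c\}$ are edges of $\overline{\Gamma}$, they are non-edges of $\Gamma$, so among the three pairs from $S$ the only one that can possibly be an edge of $\Gamma$ is $\{a,c\}$; thus $|E\Gamma_S|\leq 1$, as required. There is no real obstacle beyond the translation to the complement: once one observes that ``$\Gamma$ not a join'' means ``$\overline{\Gamma}$ connected,'' the statement reduces to the elementary fact that a connected graph on at least three vertices contains a path with two edges.
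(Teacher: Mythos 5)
Your proof is correct, and it takes a mildly different route from the paper's. The paper argues by contradiction directly in $\Gamma$: since $\Gamma$ is not a join it is not complete, so there is a non-adjacent pair $\{v_1,v_2\}$; if every triple spanned at least two edges, then every third vertex $v_3$ would have to be adjacent to both $v_1$ and $v_2$, exhibiting $\Gamma$ as the join of $\{v_1,v_2\}$ and $V\Gamma\setminus\{v_1,v_2\}$, a contradiction. You instead pass to the complement graph, invoke the full equivalence ``$\Gamma$ is not a join $\iff$ $\overline{\Gamma}$ is connected,'' and then construct $S$ directly as a path of length two in $\overline{\Gamma}$, which exists because a connected graph on at least three vertices has a vertex of degree at least two. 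The two arguments are dual: the paper's contradiction step amounts to saying that an edge of $\overline{\Gamma}$ which extends to no two-edge path is an entire connected component of $\overline{\Gamma}$. Your version buys a direct (non-contradiction) construction and a clean conceptual link to the dictionary the paper has already set up, namely that connected components of $\overline{\Gamma}$ correspond to the factors of the canonical direct product decomposition of $G_\Gamma$; the paper's version is slightly more economical in its prerequisites, using only the weaker implication ``not a join $\Rightarrow$ some pair of vertices is non-adjacent'' rather than the full equivalence with connectivity of the complement.
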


\begin{proof} Since $\Gamma$ is not a join, $\Gamma$ is not a complete graph and there are vertices $v_1,v_2$ such that $\{v_1,v_2\}\notin E\Gamma$. Now, assume for a contradiction that for any $S\subset V\Gamma$ of cardinal 3, one has $|E\Gamma_S|\geq2$. So, for any $v_3\in V\Gamma\setminus\{v_1,v_2\}$, $\{v_1,v_3\},\{v_2,v_3\}\in E\Gamma$ and thus $\Gamma$ is the join of $\{v_1,v_2\}$ and $V\Gamma\setminus\{v_1,v_2\}$, and we have a contradiction.
\end{proof}
\begin{proof}[Proof of Theorem \ref{thm:graph_prod}] Thanks to Proposition \ref{prop:normvsquot}, a direct product is inner amenable if and only at least of its factor is. So, it suffices to prove that if $\Gamma$ is not a join and $G_\Gamma$ is inner amenable then $\Gamma$ has a unique vertex $v$ (and thus $G_\Gamma=G_v$ is inner amenable), or $\Gamma=\Gamma^1_{\textrm{euc}}$ and the vertex groups have orders 2 (that is $G_\Gamma\simeq D_\infty$).

From now on, we assume that $\Gamma$ is not a join, not reduced to a vertex nor to a pair of edges with vertex groups $\Zb_2$ and we show that in this case $G_\Gamma$ is not inner amenable. By Lemma \ref{lem:irreducible}, $X_\Gamma$ is irreducible.

Let us show that $X_\Gamma$ is not pseudo-Euclidean (in our irreducible situation, this means $\Rb$-like \cite[Lemma 7.1]{MR2827012}) and that implies that there is no invariant pair of points at infinity. That is, the action is non-elementary. So, for the sake of contradiction, let us assume, there is a $\Aut(X_\Gamma)$-invariant Euclidean subspace $E\subset X_\Gamma$. Since the fundamental domain $C\Gamma$ is compact, the projection $\pi\colon X_\Gamma\to E$ is a quasi-isometry. In particular, the subcomplexes $X\Gamma_S$ (for $S\subset V\Gamma$) can't be hyperbolic without being quasi-isometric to a real interval.

If $|V\Gamma|=2$ then at least one vertex group has order greater than 2 and then $X_\Gamma$ is tree with no leaf and at least one vertex with valency greater than 2. Thus it can't be quasi-isometric to a Euclidean space. So it remains to deal with the case where $|V\Gamma|\geq3$. Thanks to Lemma \ref{lem:subsets3}, there is $S=\{v_1,v_2,v_3\}\subset V\Gamma$ such that $|E\Gamma_S|\leq1$. If $E\Gamma_S=\emptyset$ then $X_{\Gamma_S}$ is a tree without leaf and the vertex $\emptyset$ has valency 3, which gives a contradiction. If $|E\Gamma_S|=1$, we may assume that $E\Gamma_S=\{v_1,v_2\}$. The complex $X_{\Gamma_{\{v_1,v_2\}}}$ is bounded (all squares are attached to the vertex corresponding to $G_{\{v_1,v_2\}}$) and $G_{\{v_1,v_2\}}$ and has at least 4 elements (this is similar to the last example in Table \ref{table:graphproducts}). So $X_{\Gamma_S}$ is quasi-isometric to a tree without leaf and a vertex with valency at least 4. Once again, this gives a contradiction.

So, we know that $X_\Gamma$ is not pseudo-Euclidean. By \cite[Theorem 7.2]{MR2827012}, contains a facing triple and each of this hyperplane $\widehat{\mathfrak{h}_i}$ is skewered by some contracting isometry $g_i$. If $\mathfrak{h}_1,\mathfrak{h}_2,\mathfrak{h}_3$ are the corresponding half-spaces,we may assume that $g_i\mathfrak{h}_i$ is properly contained in $\mathfrak{h}_i$.  Each contracting isometry has exactly 2 fixed points. By the configuration of the triple of hyperplanes, the three attractive points of the isometries $g_i$ are distincts and this shows that $\Gamma$ has no invariant pair of points at infinity.

By Lemmas \ref{lem:nofixedpoint} and \ref{lem:essential},  the action of $G_\Gamma$ has non-elementary and is essential. We can apply Theorem~\ref{ccc} and thus we know the existence of $X_0\subset X$ such that for any $x\in X_0$ and any conjugation invariant mean $m((G_\Gamma)_x)=1$. Let $x\in X_0$. Up to apply an element  of $G_\Gamma$, we may assume that $x$ belongs to some square containing the vertex $\emptyset$. The stabilizer of $x$ is then $G_{\sigma(x)}$ where is $\sigma(x)$ is the minimal clique appearing as vertex in the smallest cube containing $x$. We claim that there is $\gamma\in G_\Gamma$ such that $\gamma G_{\sigma(x)}\gamma^{-1}\cap G_{\sigma(x)}=\{1\}$. It follows that $m(\{1\})=1$ and thus $G_\Gamma$ cannot be inner amenable. Since $\Gamma$ is not a join, for any $v\in\sigma(x)$, there is $g_v$ in some $G_{v'}$ such that $g_v$ does commute with $G_v$. So if $\sigma(x)=\{v_1,\dots,v_n\}$, it suffices to take $\gamma=g_{v_1}\cdots g_{v_n}$.
\end{proof}

A \textbf{right-angled Artin group} is a graph product of groups where all vertex groups are infinite cyclic and a \textbf{right-angled Coxeter group} is a graph product where all vertex groups are $\Zb/2\Zb$. We readily get the following two consequences.

\begin{cor} A right-angled Artin group is inner amenable if and only if it splits as a direct product with $\Zb$.
\end{cor}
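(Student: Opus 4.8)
The plan is to obtain this as a direct specialization of Theorem~\ref{thm:graph_prod}, in which every vertex group is taken to be infinite cyclic. I would analyse the two alternatives of that theorem separately; the only real work is translating its graph-theoretic hypotheses into the desired direct-product splitting.

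First I would rule out the second alternative of Theorem~\ref{thm:graph_prod}: it demands two vertex groups isomorphic to $\Zb/2\Zb$, whereas every vertex group of a right-angled Artin group is isomorphic to $\Zb$, so that alternative cannot occur. Hence, if $G_\Gamma$ is inner amenable, the first alternative must hold, namely there is a vertex $v\in V\Gamma$ with $N(v)=V\Gamma$ and with $G_v$ inner amenable. The latter clause is automatic here: $G_v\cong\Zb$ is infinite and abelian, so conjugation on it is trivial and any atomless mean (which exists since $\Zb$ is infinite) is conjugation invariant, whence $G_v$ is inner amenable. Thus the first alternative reduces to the existence of a vertex $v$ adjacent to every other vertex. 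Such a $v$ exhibits $\Gamma$ as the join of the one-vertex graph $\{v\}$ with the induced subgraph $\Gamma'$ on $V\Gamma\setminus\{v\}$, and therefore, by the standard fact that a join of graphs yields a direct product of the associated graph products, $G_\Gamma\cong G_v\times G_{\Gamma'}=\Zb\times G_{\Gamma'}$, a direct product with $\Zb$ (with the degenerate case $V\Gamma=\{v\}$ giving $G_\Gamma\cong\Zb$ itself).

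For the converse I would show directly that any abstract splitting $G_\Gamma\cong\Zb\times H$ forces inner amenability. Fixing an atomless mean on the central factor $\Zb$ and pushing it forward along the inclusion $\Zb\hookrightarrow G_\Gamma$ produces an atomless mean on $G_\Gamma$; because this factor is central, every conjugation fixes each of its elements pointwise, so the resulting mean is conjugation invariant, and hence $G_\Gamma$ is inner amenable. Equivalently, this is the instance for $\Zb$ of the observation, recorded at the start of the proof of Theorem~\ref{thm:graph_prod} as a consequence of Proposition~\ref{prop:normvsquot}, that a direct product is inner amenable once one of its factors is.

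I expect no serious obstacle here: the content is entirely carried by Theorem~\ref{thm:graph_prod}. The only steps meriting a moment of care are the two translations between group theory and graph combinatorics — that $\Zb\not\cong\Zb/2\Zb$ eliminates the second alternative, and that the adjacency condition $N(v)=V\Gamma$ is precisely a join decomposition splitting off the $\Zb$ direct factor — both of which are immediate from the definition of a graph product.
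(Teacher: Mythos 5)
Your proof is correct and follows essentially the same route as the paper, which derives this corollary as an immediate specialization of Theorem~\ref{thm:graph_prod} (the paper offers no further argument beyond ``we readily get''). Your added details --- ruling out the $\Zb/2\Zb$ alternative, noting that $\Zb$ is automatically inner amenable so alternative (1) reduces to a dominating vertex and hence a join splitting, and handling the converse via the central $\Zb$ factor as in Proposition~\ref{prop:normvsquot} --- are exactly the intended translations.
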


\begin{rmk} A right-angled Artin group also acts on its Davis complex which is a different \cat  cube complex from the one we use here.\end{rmk}

\begin{cor}A right-angled Coxeter group is inner amenable if and only if it splits as a direct product with the infinite dihedral group $D_\infty$.
\end{cor}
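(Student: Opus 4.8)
The plan is to specialize Theorem~\ref{thm:graph_prod} to the case where every vertex group is $\Zb/2\Zb$ and then match the resulting combinatorial condition with the asserted direct product splitting. The first and decisive observation is that $\Zb/2\Zb$, being finite, carries no atomless mean and is therefore not inner amenable. Hence the first alternative of Theorem~\ref{thm:graph_prod} --- the existence of a vertex $v$ with $N(v)=V\Gamma$ and $G_v$ inner amenable --- can never occur for a right-angled Coxeter group. Consequently, $G_\Gamma$ is inner amenable if and only if the second alternative holds; and since the requirement $G_{v_1}\simeq G_{v_2}\simeq\Zb/2\Zb$ is automatic here, this reduces to the purely graph-theoretic condition that there exist $v_1,v_2\in V\Gamma$ with $N(v_1)=V\Gamma\setminus\{v_2\}$ and $N(v_2)=V\Gamma\setminus\{v_1\}$.

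For the forward implication I would unwind this condition geometrically. The equalities $N(v_1)=V\Gamma\setminus\{v_2\}$ and $N(v_2)=V\Gamma\setminus\{v_1\}$ say exactly that $v_1$ and $v_2$ are non-adjacent while each is adjacent to every other vertex of $\Gamma$. Setting $V_1=\{v_1,v_2\}$ and $V_2=V\Gamma\setminus\{v_1,v_2\}$, this exhibits $\Gamma$ as the join of $V_1$ and $V_2$, so by the join-decomposition recalled before Theorem~\ref{thm:graph_prod} we obtain $G_\Gamma=G_{\{v_1,v_2\}}\times G_{\Gamma_{V_2}}$. Because $v_1$ and $v_2$ are non-adjacent and both carry the group $\Zb/2\Zb$, the first factor is $\Zb/2\Zb*\Zb/2\Zb=D_\infty$, so $G_\Gamma$ splits as a direct product with $D_\infty$ (with the second factor possibly trivial, when $V_2=\emptyset$).

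For the reverse implication, suppose $G_\Gamma\cong D_\infty\times L$. It suffices to know that $D_\infty$ is itself inner amenable and that a direct product is inner amenable as soon as one of its factors is. The latter is exactly the consequence of Proposition~\ref{prop:normvsquot} already invoked in the proof of Theorem~\ref{thm:graph_prod}. For the former, $D_\infty$ contains the infinite cyclic group as a subgroup of index two, and $\Zb$ is inner amenable (being infinite abelian, conjugation is trivial and any mean vanishing on finite sets works), so $D_\infty$ is inner amenable by Proposition~\ref{prop:almostisom}(1). Thus $G_\Gamma$ is inner amenable.

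I do not anticipate a genuine obstacle here: given Theorem~\ref{thm:graph_prod} the argument is essentially bookkeeping. The only points requiring care are the initial remark that finiteness of $\Zb/2\Zb$ kills the first alternative, and, in the reverse direction, the fact that inner amenability of a direct product follows from inner amenability of a single factor rather than from any amenability of the complementary factor (so that Proposition~\ref{prop:coamen} is \emph{not} what is needed).
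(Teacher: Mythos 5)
Your proof is correct and is essentially the paper's own (implicit) argument: the corollary is stated there as an immediate consequence of Theorem~\ref{thm:graph_prod}, specialized exactly as you do, with the first alternative ruled out because the finite group $\Zb/2\Zb$ admits no atomless mean, and the second alternative yielding the join decomposition $G_\Gamma\cong D_\infty\times G_{\Gamma_{V_2}}$. One caveat: the product fact you need in the reverse direction (one inner amenable factor makes the product inner amenable) is not literally what Proposition~\ref{prop:normvsquot} states --- that proposition gives the converse implication --- but the needed direction is immediate, since an atomless conjugation invariant mean on $D_\infty$ pushes forward to a mean on the product concentrated on $D_\infty\times\{1\}$, which stays conjugation invariant because the complementary factor acts trivially by conjugation there; the paper's proof of Theorem~\ref{thm:graph_prod} makes the same loose attribution, so this does not separate your argument from theirs.
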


\section{Trees, amalgams and inner amenability}

In this section, we prove Theorem \ref{thm:tree}, and we also sketch a direct argument for Theorem \ref{thm:tree} which does not rely on the general results on \cat cube complexes. 

Let us say that a group action on a tree is \textbf{minimal} if there is no proper invariant subtree.

\begin{thm}\label{thm:trees} Let ${G}$ be a group acting non-elementarily and minimally on a tree $T$ and let $m$ be a conjugation invariant mean on ${G}$. Then $m({G}_x)=1$ for every $x\in T$.
\end{thm}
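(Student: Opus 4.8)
The plan is to deduce the result from the one-dimensional specializations of the machinery already in place, namely Proposition \ref{prop:ell_cat} and Proposition \ref{ccc}, after arranging the hypotheses those results require. First I would make two harmless reductions. Replacing $T$ by its barycentric subdivision makes the action simplicial and without inversions while changing neither the underlying metric tree nor the stabilizer $G_x$ of any point $x\in T$, so it suffices to treat that case. Since the action is non-elementary, $T$ is not a line, hence has no $\Aut(T)$-invariant Euclidean subspace and, being one dimensional, is irreducible as a \cat cube complex (a $1$-dimensional complex admits no nontrivial product splitting). Minimality together with non-elementarity also yields essentiality: the minimal tree is leafless and its limit set is all of $\partial T$, so every half-tree $\mathfrak{h}^*$ contains a ray to some end $\xi$, every orbit accumulates at $\xi$, and therefore no orbit can remain in a bounded neighborhood of the complementary half-space $\mathfrak{h}$.

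With these hypotheses secured, the first substantive step is that $m$ concentrates on elliptic elements. This is exactly Proposition \ref{prop:ell_cat}: $T$ is a complete \cat space of finite telescopic dimension with no Euclidean de Rham factor (it is not a line), and the action is minimal with no fixed point at infinity (non-elementarity), so $m(\Ell(G))=1$.

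The heart of the argument is a per-edge statement: for each edge $e$, with complementary open half-trees $\mathfrak{h}$ and $\mathfrak{h}^*$ meeting at the midpoint $p$, the pointwise stabilizer $G_e$ satisfies $m(G_e)=1$. Proposition \ref{ccc} gives that $\{g : \Fix(g)\cap\mathfrak{h}\neq\emptyset\}$ and $\{g : \Fix(g)\cap\mathfrak{h}^*\neq\emptyset\}$ are each $m$-conull, so their intersection is conull. For $m$-almost every $g$ this intersection consists of elliptic elements whose fixed-point set $\Fix(g)$ is a nonempty subtree meeting both components of $T\setminus\{p\}$; by connectedness $\Fix(g)$ must then contain $p$, and since the action is without inversions such a $g$ fixes both endpoints of $e$. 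Hence $G_e$ contains a conull set and $m(G_e)=1$. This is precisely where one-dimensionality trivializes the transversality argument of Proposition \ref{ccc}: in a tree two distinct hyperplanes never cross, and the dichotomy ``$g$ fixes the edge'' versus ``$\Fix(g)$ is confined to one side'' is decided by mere connectedness of $\Fix(g)$.

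Finally I would pass from edges to arbitrary points. Fix a base vertex $x_0$ and let $x\in T$. The geodesic from $x_0$ to $x$ crosses finitely many edges $e_1,\dots,e_n$, and $\bigcap_{i=1}^n G_{e_i}$ is $m$-conull as a finite intersection of conull sets; every element of it fixes $x_0$, each intermediate vertex, and $x$. Thus $G_x\supseteq\bigcap_{i=1}^n G_{e_i}$ gives $m(G_x)=1$ (and for $x$ interior to an edge $e$ one has $G_x=G_e$ directly). The only genuine obstacle is the verification that Proposition \ref{ccc} applies, i.e. deriving essentiality from minimality and non-elementarity; once that is granted, the remaining steps are forced by connectedness of fixed subtrees and a finite intersection of conull sets.
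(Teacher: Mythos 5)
Your proof is correct, but it takes a genuinely different route from the paper's. The paper's own proof of Theorem \ref{thm:trees} is a two-line reduction: once $T$ is not a point, minimality gives essentiality, and then the full Theorem \ref{thm:fix_ccc} is invoked, with minimality forcing the $G$-invariant closed convex subspace $X_0$ produced there to be all of $T$. That reduction imports the entire second half of the machinery behind Theorem \ref{thm:fix_ccc}: Lemma \ref{lem:bounded}, the convexity/minimization argument of Lemma \ref{prop:almost-fix}, and the idempotent-mean argument via Ellis's Lemma and Lemma \ref{lem:stationary}. You stop instead at Propositions \ref{prop:ell_cat} and \ref{ccc} and replace all of that with a dimension-one observation: the fixed-point set of an elliptic element is a subtree, so an element with fixed points on both sides of an edge midpoint must fix the midpoint, hence (after barycentric subdivision kills inversions) the whole edge; one edge incident to a given point, or your finite intersection of conull edge stabilizers along a geodesic, then handles every $x\in T$. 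This is precisely the simplification the introduction alludes to (``at this point, the proof in the one dimensional setting is essentially complete''), and it is close in spirit to the direct argument sketched in the Remark following Theorem \ref{thm:trees} --- except that the Remark re-derives both the concentration on elliptics and the per-edge dichotomy from scratch with boundary means, whereas you cite the two propositions as black boxes. What your route buys: the higher-dimensional machinery (idempotent means, the function $\varphi$, discreteness of orbits) is never needed in dimension one, and you obtain $m(G_x)=1$ at every point of $T$ directly rather than first on a minimal invariant subspace. What the paper's route buys: brevity, and no need for the hypothesis verifications you carry out explicitly (irreducibility of a one-dimensional complex, essentiality via the limit set being all of $\partial T$, triviality of the Euclidean de Rham factor), which the paper compresses into a single sentence. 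Those verifications of yours are correct, so there is no gap.
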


\begin{proof} If $T$ is reduced to a point then the result is trivial. If it is not reduced to a point, then no orbit is bounded and thus the action is essential. The conclusion therefore follows from Theorem~\ref{thm:fix_ccc}.
\end{proof}

\begin{rmk}Let us sketch a direct proof of Theorem~\ref{thm:trees} that does not rely on Theorem \ref{thm:fix_ccc}. Let $m$ be some conjugation invariant mean on ${G}$. Then we first argue that $m$ concentrates on the elliptic group elements. Otherwise, if $m(\Hyp({G}))>0$, then one can push forward the normalization of $m|_{\Hyp({G})}$ to $\partial T$ by associating to any hyperbolic element its attractive fixed point. This yields an invariant mean on $\partial T$. The removal of any one edge partitions the tree into two half spaces, which in turn yields a partition of the boundary of $T$ into two pieces. One then considers the measure of each of these two boundary pieces. There can be no edge whose associated boundary pieces each have measure 1/2, since otherwise the collection of all such edges would necessarily be a $G$-invariant segment, half-line, or line, which would contradict non-elementarity of the action. Therefore, for each edge, one of its associated boundary pieces has measure strictly greater than 1/2. By considering the intersection of all half-spaces corresponding to boundary pieces with measure strictly greater than $1/2$, we obtain a point in $T$ or in $\partial T$ which has to be fixed by ${G}$, which yields a contradiction once again. Thus we know that $m(\Ell({G}))=1$.

We claim that for any edge $e$, the measure of the point wise stabilizer of $e$ is 1. Observe that if this holds for one edge, then it in fact holds for every edge by ${G}$-invariance, convexity and minimality. So let us assume toward a contradiction that it holds for no edge. That is, for every edge $e$ of $T$, the measure of the set of elliptic elements whose fixed point set is completely contained in one of the two connected components of $T\setminus e$ is positive. By comparing the measure of the set of group elements having a fixed point set in the first connected component of $T\setminus e$ with the measure of the set of group elements having fixed point set in the second connected component of $T\setminus e$, we can argue as above (or, more concretely, as in the analysis of $\mathcal{W}_2$ in the third paragraph of the proof of Proposition \ref{ccc}) to obtain a contradiction with non-elementarity of the action.

So, we know that for any edge, the measure of its pointwise stabilizer is 1, and this implies that for any vertex its stabilizer has measure 1.  \end{rmk}

\begin{cor}\label{cor:amalgam_hnn}
Let $G = A\ast_H B$ be a nondegenerate amalgamated free product. Then every conjugation invariant mean on $G$ concentrates on $H$. Thus, $G$ is inner amenable if and only if there exist conjugation invariant, atomless means $m_A$ on $A$ and $m_B$ on $B$ with $m_A(H)=m_B(H)=1$, and $m_A(E)=m_B(E)$ for every $E\subseteq H$.

Let $G = HNN(K, H,\varphi)$ be a non-ascending HNN extension. Then every conjugation invariant mean on $G$ concentrates on $H$. Thus, $G$ is inner amenable if and only if there exists a conjugation invariant, atomless mean $m$ on $K$ with $m(H)=1$, and $m(E)=m(\phi (E))$ for every $E\subseteq H$.
\end{cor}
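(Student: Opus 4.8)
The plan is to realize both $G=A\ast_H B$ and $G=\mathrm{HNN}(K,H,\varphi)$ as groups acting on their Bass--Serre trees $T$ and to apply Theorem~\ref{thm:trees}. First I would verify that under the stated hypotheses the action on $T$ is minimal and non-elementary. For the amalgam, minimality holds because $H\neq A$ and $H\neq B$, and the index hypothesis (some $[A:H]$ or $[B:H]$ is $\geq 3$) rules out the only degenerate case, in which both indices equal $2$ and $T$ is a line carrying an invariant pair of ends; hence no vertex, edge, end, or pair of ends is fixed. For the non-ascending HNN extension, every vertex of $T$ has valence $[K:H]+[K:\varphi(H)]\geq 4$ since $H\neq K\neq\varphi(H)$, so $T$ is not a line and has no fixed end, and $T$ is automatically minimal. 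Theorem~\ref{thm:trees} then yields $m(G_x)=1$ for every conjugation invariant mean $m$ and every vertex $x$. Taking the two vertex stabilizers $A$ and $B$ gives $m(A)=m(B)=1$, so $m(A\cap B)=m(H)=1$ because $A\cap B=H$ in an amalgam; in the HNN case the stabilizers of the vertices $K$ and $tK$ give $m(K)=m(tKt^{-1})=1$, hence $m(K\cap tKt^{-1})=m(\varphi(H))=1$ (as $K\cap tKt^{-1}=\varphi(H)$ by Britton's lemma), and since $t^{-1}\varphi(H)t=H$ and $m$ is conjugation invariant we get $m(H)=m(\varphi(H))=1$. This proves the ``concentrates on $H$'' assertions.

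For the forward direction of each equivalence, suppose $G$ is inner amenable and fix an atomless conjugation invariant mean $m$ on $G$, which concentrates on $H$ by the above. In the amalgam case I would set $m_A:=m\rest A$ and $m_B:=m\rest B$ (legitimate since $m(A)=m(B)=1$). Each is atomless, concentrates on $H$, and is invariant under conjugation by $A$ (resp.\ $B$) because $m$ is $G$-conjugation invariant; moreover $m_A(E)=m(E)=m_B(E)$ for $E\subseteq H$. In the HNN case I would set $m_K:=m\rest K$; the identity $m_K(E)=m_K(\varphi(E))$ for $E\subseteq H$ follows from $\varphi(E)=tEt^{-1}$ together with conjugation invariance of $m$ under $t$.

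The substantive direction is the converse, where one must build an atomless $G$-conjugation invariant mean out of the given data. For the amalgam, let $m_H$ denote the common mean $m_A\rest H=m_B\rest H$ and define $m(E):=m_H(E\cap H)$ for $E\subseteq G$. Because $m_A$ and $m_B$ concentrate on $H$, one checks $m(E)=m_A(E\cap A)=m_B(E\cap B)$; the first expression is invariant under conjugation by any $a\in A$ (since $aAa^{-1}=A$ forces $aEa^{-1}\cap A=a(E\cap A)a^{-1}$ and $m_A$ is $A$-conjugation invariant), and the second is likewise invariant under conjugation by $B$. As $A$ and $B$ generate $G$, the mean $m$ is $G$-conjugation invariant, and it is atomless since $m_H$ is. For the HNN extension, I would analogously set $m(E):=m_K(E\cap H)$; invariance under conjugation by $K$ is immediate as before.

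The main obstacle is $t$-conjugation invariance in the HNN converse: unlike conjugation by elements of $K$, conjugation by $t$ does not preserve $K$, so a priori $m(tEt^{-1})=m_K(tEt^{-1}\cap H)$ can pick up contributions from elements $h\in H\setminus\varphi(H)$ with $t^{-1}ht\in E$, which $m(E)$ does not see. The resolution is the observation that taking $E=H$ in the hypothesis $m_K(E)=m_K(\varphi(E))$ forces $m_K(\varphi(H))=m_K(H)=1$, so $m_K(H\setminus\varphi(H))=0$. Consequently only $h\in\varphi(H)$ contribute, for which $t^{-1}ht=\varphi^{-1}(h)\in H$; a short computation then gives $tEt^{-1}\cap H$ equal, up to an $m_K$-null set, to $\varphi(E\cap H)$, so that $m(tEt^{-1})=m_K(\varphi(E\cap H))=m_K(E\cap H)=m(E)$ using the hypothesis once more. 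Thus $m$ is invariant under $t$ and under $K$, hence under all of $G$, and atomless, completing the proof.
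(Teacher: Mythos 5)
Your proposal is correct and follows essentially the same route as the paper: realize $G$ as acting on its Bass--Serre tree, check minimality and non-elementarity from the nondegeneracy/non-ascending hypotheses, apply Theorem~\ref{thm:trees} to conclude that every conjugation invariant mean concentrates on (a conjugate of) the edge stabilizer $H$, and then derive the two characterizations by restricting, respectively extending, means. The paper compresses all of this into a two-sentence sketch, while you supply the details it calls ``straightforward''---in particular the only delicate point, $t$-conjugation invariance in the HNN converse via $m_K(H\setminus\varphi(H))=0$, which you handle correctly.
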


\begin{proof} It follows from Bass-Serre theory that in both cases (amalgams and HNN-extension), $G$ has a minimal non-elementary action on a tree such that $H$ is exactly the pointwise stabilizer of some edge. By Theorem~\ref{thm:trees}, for any conjugation invariant mean $m$ on $G$, $m(H)=1$. The respective characterizations of inner amenability are a straightforward consequence (the sufficiency of the conditions for inner amenability is obvious, and the necessity follows directly from the first part).
\end{proof}

\begin{rmk}\label{rem:Amine}
Amine Marrakchi, remarking on an earlier version of this article, informed us that he found another proof of Corollary \ref{cor:amalgam_hnn} for nondegenerate amalgams $G=A\ast _H B$, which he kindly allowed us to reproduce here.

We may assume that $|A:H|\geq 2$ and $|B:H|\geq 3$. Let $G_0 = G\setminus H$, $A_0=A\setminus H$ and $B_0 = B\setminus H$. By the definition of an amalgamated free product, the family of subsets
\[
A_0(B_0A_0)^n, \ (A_0B_0)^{n+1}, \ B_0(A_0B_0)^n, \ (B_0A_0)^{n+1}, \ \ n\geq 0
\]
forms a partition of $G_0$. Let $S=\bigcup _{n\geq 0}A_0(B_0A_0)^n \cup (A_0B_0)^{n+1}$ be the set of all elements starting with a letter in $A_0$. Take $a\in A_0$. Then we have $S\cup aSa^{-1} = G_0$. Take $b_0, b_1\in B_0$ such that $b_1^{-1}b_2 \in B_0$. Then the sets $S$, $b_1Sb_1^{-1}$ and $b_2Sb_2^{-1}$ are disjoint in $G_0$. Now suppose that $m$ is a conjugation invariant mean on $G$. Then we must have $2\mu (S) \geq m (S\cup aSa^{-1}) = m(G_0)$ and $3 m(S) = m (S \sqcup b_1Sb_1^{-1}\sqcup b_2Sb_2^{-1}) \leq m (G_0)$. This shows that $m(G_0)=0$ as we wanted.
\end{rmk}

\bibliographystyle{amsplain}
\bibliography{biblio}

\end{document}